\documentclass[12pt,english]{amsart}
\usepackage{lmodern}

\usepackage[T1]{fontenc}
\usepackage[latin9]{inputenc}
\usepackage{mathtools}
\usepackage{amstext}
\usepackage{amsthm}
\usepackage{amssymb}

\makeatletter
\numberwithin{equation}{section}
\numberwithin{figure}{section}
\usepackage{enumitem}
\theoremstyle{plain}
\newtheorem{thm}{\protect\theoremname}
\theoremstyle{plain}

\usepackage{fullpage}
\usepackage{amsfonts}
\usepackage{pdfsync}
\usepackage{tikz}
\usetikzlibrary{decorations.markings}
\usetikzlibrary{decorations.pathmorphing}

\usetikzlibrary{arrows}
\tikzset{->-/.style={decoration={ markings,mark=at position #1 with {\arrow{stealth}}},postaction={decorate}}}
\tikzset{-<-/.style={decoration={ markings,mark=at position #1 with {\arrow{stealth reversed}}},postaction={decorate}}}

\usepackage{amssymb}
\usepackage{enumitem}
\usepackage{algorithmic}
\usepackage{algorithm}

\usepackage{fancyhdr}

\usepackage{color}
\newtheorem{Theorem}{Theorem}[]
\newtheorem{Claim}{Claim}[section]

\newtheorem{Conjecture}{Conjecture}

\newtheorem{Definition}[Theorem]{Definition}
\newtheorem{lemma}{Lemma}[section]

\theoremstyle{definition}
\newcommand{\cA}{\mathcal{A}}
\newcommand{\cB}{\mathcal{B}}

\def\ol{\overline}
\def\ex{{\rm ex}}
\def\EX{{\rm EX}}
\def\cex{\overline{\rm ex}}
\def\CEX{\overline{\rm EX}}

\def\GG{\mathbb{G}}
\def\KK{\mathbb{K}}
\def\HH{\mathcal{H}}

\usepackage[normalem]{ulem}
\usepackage{babel}
\providecommand{\lemmaname}{Lemma}
\providecommand{\theoremname}{Theorem}

\begin{document}
\title{\bf {Tur\'{a}n number of three disjoint cliques of arbitrary sizes}}

\author{Alexandr Kostochka}
\address{University of Illinois at Urbana--Champaign, Urbana, IL 61801}
\email{kostochk@illinois.edu}
\thanks{Research of the first author is supported in part by NSF RTG Grant DMS-1937241.}

\author{Dadong Peng}
\address{University of Illinois at Urbana--Champaign, Urbana, IL 61801}
\email{dadongp2@illinois.edu}
\thanks{Research of the second author is supported in part by NSF RTG Grant DMS-1937241.}

\author{Zimu Xiang}
\address{Shanghai Center for Mathematical Sciences, Fudan University, Shanghai 200433, P.R.China}
\email{zimuxiang@fudan.edu.cn}

\author{Liang Zhang}
\address{Center for Combinatorics and LPMC, Nankai University, Tianjin 300071, P.R.China.}
\email{1120240010@mail.nankai.edu.cn}
\thanks{Research of the fourth author is supported  by the Fundamental and Interdisciplinary Disciplines Breakthrough Plan of the Ministry of Education of China (JYB2025XDXM207).}

\date{}

\begin{abstract} The {\it Tur\'{a}n number} $\ex(n,H)$ of a given graph $H$ is the maximum number of edges in an
$n$-vertex $H$-free graph. In this paper, for all $p\ge q\ge r\ge 2$ and $n\geq p+q+r$, we determine the exact value of $\ex(n,K_p\cup K_q\cup K_r)$ and describe all  extremal graphs.\\
\end{abstract}
\keywords{Tur\'{a}n number, discharging, disjoint cliques.}
\maketitle  
\section{Introduction}

For given graphs $G$ and $H$, $G$ is {\it $H$-free} if $G$ contains no subgraph isomorphic to $H$. The Tur\'{a}n number of a graph $H$, denoted by $\ex(n,H)$, is the maximum number of edges in an $n$-vertex $H$-free graph. An $n$-vertex $H$-free graph with $\ex(n,H)$ edges is called an {\em extremal graph} for $H$. Let $\EX(n,H)$ denote the family of all $n$-vertex extremal graphs for $H$. 

For integers $n$ and $p$, let $T_{n,p}$ denote the Tur\'{a}n graph, that is, the complete $p$-partite graph on $n$ vertices whose partite sets differ in size by at most one.
Let $t_{n,p}$ denote the number of edges of $T_{n,p}$. Tur\'{a}n~\cite{Turan1941} proved 

\begin{thm}[\cite{Turan1941}]\label{Turan}
For all $n\geq p\geq 2$, $\ex(n,K_p)=t_{n,p-1} $ and $\EX(n,K_p)=\{T_{n, p-1}\}$.
\end{thm}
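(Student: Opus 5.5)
We prove Tur\'an's theorem by induction on $n$, with the value $t_{n,p-1}$ and uniqueness of the extremal graph handled together. The base case $n\le p-1$ is excluded by hypothesis; the case $n=p$ holds because the only graphs on $p$ vertices with $\binom{p}{2}-1=t_{p,p-1}$ edges are $K_p$ minus an edge, which is $T_{p,p-1}$. Alternatively, we can avoid induction altogether with Zykov symmetrization, and that is the route we plan to follow, since it gives uniqueness cleanly.

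\emph{Step 1 (upper bound via symmetrization).} Let $G$ be an $n$-vertex $K_p$-free graph with the maximum number of edges. We first show that non-adjacency is an equivalence relation on $V(G)$. Suppose $u\not\sim v$ and $v\not\sim w$ but $u\sim w$.

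If $d(v)<d(u)$, delete $v$ and add a twin $u'$ of $u$, adjacent exactly to $N(u)$. Cloning a vertex into a non-adjacent copy cannot create $K_p$, because any clique uses at most one of $u,u'$. This strictly increases the edge count, a contradiction; the case $d(v)<d(w)$ is symmetric.

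So $d(v)\ge d(u)$ and $d(v)\ge d(w)$. Now delete $u$ and $w$ and add two twins of $v$. Since $u\sim w$ was counted twice in $d(u)+d(w)$, the new graph has
\[
e(G)-\bigl(d(u)+d(w)-1\bigr)+2d(v)\;\ge\; e(G)+1
\]
edges, again a contradiction.

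Hence $G$ is complete multipartite. Being $K_p$-free, it has at most $p-1$ parts. Among complete multipartite graphs with at most $p-1$ parts, the number of edges is maximized exactly when there are $p-1$ parts whose sizes differ by at most one. Indeed, if two parts have sizes $a\ge b+2$, moving a vertex from the larger to the smaller gains $a-1-b>0$ edges. This also rules out having fewer than $p-1$ nonempty parts when $n\ge p-1$. Therefore $e(G)\le t_{n,p-1}$, and $T_{n,p-1}$ is $K_p$-free, so equality is attained.

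\emph{Step 2 (uniqueness).} The argument above applies to every extremal graph $G$, not only one chosen with a special property: each modification strictly increased the edge count, which contradicts extremality directly. So every extremal graph is complete multipartite with at most $p-1$ parts. The balancing argument then forces the part sizes to be balanced, hence $G\cong T_{n,p-1}$.

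The main obstacle is the strict inequality in the second case of Step 1. It relies on the edge $uw$ being double counted, and this is exactly what makes every extremal graph, rather than just some extremal graph, complete multipartite. That in turn is what yields $\EX(n,K_p)=\{T_{n,p-1}\}$.
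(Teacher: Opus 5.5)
The paper gives no proof of this statement. It is quoted from Tur\'an's 1941 paper and used purely as a black box, for instance to conclude that an extremal graph with no induced $\ol{K}_p$ must be $\ol{T}_{n,p-1}$. Your Zykov-symmetrization argument is a correct, self-contained proof of both the value and the uniqueness.

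Two steps deserve an explicit sentence each. In the first move, a clique of $G-v+u'$ containing $u'$ cannot contain $u$, so swapping $u'$ for $u$ gives a clique of $G$. In the second move, $N_G(v)\cap\{u,w\}=\emptyset$, and this is what guarantees that each twin of $v$ contributes exactly $d(v)$ edges to $G-u-w$. You should also note that the part sizes of a balanced complete $(p-1)$-partite graph on $n$ vertices are forced, so ``balanced with $p-1$ parts'' really does mean $G\cong T_{n,p-1}$. Your opening remarks about induction and the case $n=p$ are never used and can be deleted.

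As for what your route buys: every symmetrization move strictly increases $e(G)$, so uniqueness comes for free. By contrast, the minimum-degree-deletion induction the paper uses for its analogous statement about $K_{p_1}\cup\cdots\cup K_{p_\ell}$ (Theorem~\ref{PQPQR}) needs a separate analysis of the neighbourhood of the deleted vertex in $K_c\vee T_{n-c-1,p_1-1}$ to obtain uniqueness. The price is that symmetrization depends on cloning a vertex preserving $K_p$-freeness, and this fails for disjoint unions of cliques. For example, $K_{2p-1}\cup\ol{K}_m$ is $2K_p$-free, but replacing an isolated vertex by a non-adjacent twin of a clique vertex creates $2K_p$. So your argument does not transfer to the setting of the paper's main results.
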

This Tur\'{a}n's theorem generalizes Mantel's theorem~\cite{Mantel1907} stating that $\ex(n,K_3)=\lfloor\frac{n^2}{4}\rfloor$.

We follow the notation and terminology of \cite{BondyMurty1976} unless stated otherwise. For a graph $G$, let $V(G)$ and $E(G)$ denote its vertex set and edge set, respectively. The cardinality of a set $S$ is denoted by $|S|$, and $e(G)=|E(G)|$. 

The disjoint union of graphs $G$ and $H$ is denoted by $G\cup H$. The graph $G\vee H$ denotes the join of $G$ and $H$, obtained by adding all possible edges between $V(G)$ and $V(H)$.

Let $K_p$ denote the complete graph on $p$ vertices, $\ol{K}_p$ denote the edgeless graph on $p$ vertices and $S_{\ell}$ denote a star on $\ell+1$ vertices. Furthermore, let
$kK_p$ denote the disjoint union of $k$ copies of $K_p$.

Observe that a copy of $K_p$ in $G$ corresponds to an independent set of size $p$ in $\overline{G}$. Therefore, rather than directly determining $\ex(n,H)$, we first study the minimum number of edges in a graph that contains no collection of pairwise disjoint independent sets of prescribed sizes. The extremal graphs for the original problem can then be obtained by taking complements. For convenience, define $$\ol{\ex}(n,H)=\binom{n}{2}-\ex(n,H), \ol{t}_{n,p}=\binom{n}{2}-t_{n,p}\mbox{ and }\ol{\EX}(n,H)=\{\ol{G}: G\in\EX(n,H)\}.$$

The problem of determining the Tur\'{a}n number of vertex-disjoint copies of cliques was initiated by Erd\H{o}s~\cite{Erdos1962}. Subsequently, Moon~\cite{Moon1968} and Simonovits~\cite{Simonovits1968} determined $\ex(n,kK_p)$ and characterized the extremal graphs for sufficiently large $n$. In our notation, this is equivalent to determining $\ol{\ex}(n,kK_p)$ and $\ol{\EX}(n,kK_p)$.

\begin{thm}[\cite{Moon1968,Simonovits1968}]\label{Turan2}
Let $k\geq 1$ and $p\geq 2$ be integers. For sufficiently large $n$, $\ol{\ex}(n,kK_p)=\ol{t}_{n-k+1,p-1}$ and $\ol{\EX}(n,kK_p)=\{\ol{K}_{k-1}\cup\ol{T}_{n-k+1,p-1}\}$. 
\end{thm}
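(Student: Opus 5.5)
We prove the equivalent primal statement: for $n$ large, $\ex(n,kK_p)$ is attained only by $K_{k-1}\vee T_{n-k+1,p-1}$, which has $\binom{k-1}{2}+(k-1)(n-k+1)+t_{n-k+1,p-1}$ edges. The argument is an induction on $k$, with Theorem~\ref{Turan} as the base case $k=1$.

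\textbf{Construction.} $K_{k-1}\vee T_{n-k+1,p-1}$ is $kK_p$-free. The Tur\'an part $T_{n-k+1,p-1}$ contains no $K_p$, so every copy of $K_p$ uses at least one of the $k-1$ universal vertices. Hence $k$ disjoint copies of $K_p$ would need at least $k$ universal vertices, which is impossible. Taking complements gives the claimed value $\ol{t}_{n-k+1,p-1}$ and the graph $\ol{K}_{k-1}\cup\ol{T}_{n-k+1,p-1}$.

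\textbf{Upper bound.} Let $G$ be an $n$-vertex $kK_p$-free graph with $e(G)\ge e(K_{k-1}\vee T_{n-k+1,p-1})$. The induction runs as follows.
\begin{enumerate}
\item Since $e(G)>t_{n,p-1}$ for large $n$, Theorem~\ref{Turan} gives a copy of $K_p$ in $G$.
\item Suppose some copy $Q$ of $K_p$ has every vertex of degree at most $(1-\frac{1}{p-1})n - cn$ for a small fixed $c>0$. Deleting $Q$ loses at most $p(1-\frac{1}{p-1}-c)n+O(1)$ edges. The extremal count grows by about $\frac{p-2}{p-1}n+1$ per vertex, so deleting $p$ vertices should lose about $p\cdot\frac{p-2}{p-1}n$ edges. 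Thus $G-Q$ has strictly more than $\ex(n-p,(k-1)K_p)$ edges, so it contains $(k-1)K_p$, and together with $Q$ this gives $kK_p$, a contradiction.
\item Hence every copy of $K_p$ contains a vertex of large degree.
\end{enumerate}

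\textbf{Stability.} By the Erd\H{o}s--Simonovits stability theorem (counting edges, $G$ has $t_{n,p-1}+O(n)$ edges), $G$ is $o(n^2)$-close to $T_{n,p-1}$. Take a max-cut partition $V_1,\dots,V_{p-1}$ and let $B$ be the set of vertices having at least $\varepsilon n$ neighbours inside their own part.
\begin{itemize}
\item If $|B|\ge k$, pick $k$ such vertices $v_1,\dots,v_k$. Each $v_i$, together with neighbours chosen greedily (one inside its own part, the rest across the other parts), extends to a $K_p$. These cliques can be made disjoint because the parts are almost complete to each other and $n$ is large. This gives $kK_p$, a contradiction, so $|B|\le k-1$.
\item Each vertex outside $B$ has $o(n)$ neighbours in its own part. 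A greedy embedding then shows that an edge inside some part of $G-B$ yields a $K_p$ avoiding $B$. If $|B|=k-1$, combining that $K_p$ with $k-1$ further cliques through the vertices of $B$ again gives $kK_p$, so $G-B$ is $(p-1)$-partite.
\end{itemize}

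\textbf{Counting and uniqueness.} With $|B|=s\le k-1$, $G-B$ satisfies the same setup for $(k-s)K_p$. Edge counting then forces $s=k-1$, $G-B$ complete $(p-1)$-partite with balanced parts, and every vertex of $B$ adjacent to all other vertices. Any missing edge at $B$, or any imbalance, strictly drops the count below the bound, which gives uniqueness.

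\textbf{Main obstacle.} The delicate step is this final counting when $s<k-1$. Here one must show that $G-B$, which contains $K_p$'s but is close to Tur\'an, loses more than it gains. I would handle it by induction on $k$ applied to $G-B$ with $n-s$ vertices and $k-s$ cliques, comparing the resulting formulas directly.
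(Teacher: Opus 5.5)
The paper does not prove this statement; it is quoted from Moon and Simonovits. Your argument therefore has to stand on its own, and as written it has two genuine gaps.

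\textbf{Gap 1: no minimum-degree condition.} Your greedy embedding of an edge inside a part into a $K_p$ needs one, and you never establish it. Being outside $B$ only gives an \emph{upper} bound on the own-part degree. The max-cut property bounds cross-degrees from below only by the own-part degree, which helps for vertices of $B$ but says nothing about the others. For instance, a vertex $x\notin B$ with $N(x)=\{y\}$, where $y$ lies in its own part, gives an edge that lies in no $K_p$. The standard repair is to first show $\delta(G)\ge(\frac{p-2}{p-1}-\varepsilon')n$:
\begin{itemize}
\item Delete vertices of smaller degree one at a time. The crude bound $e(G_m)\le t_{m,p-1}+(k-1)pm$ for $kK_p$-free graphs allows only $O(1)$ such deletions.
\item Prove the bound $e(G')\le e(K_{k-1}\vee T_{m-k+1,p-1})$ for the remaining graph $G'$, which has the degree condition.
\item Then even one deletion forces $e(G)<e(K_{k-1}\vee T_{n-k+1,p-1})$, because that function grows by about $\frac{p-2}{p-1}n$ per vertex.
\end{itemize}

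\textbf{Gap 2: the case $B=\emptyset$.} Your plan does not cover it: induction applied to $G-B$ with $k-s$ cliques is circular when $s=0$. By contrast, the case you flag as the obstacle, $1\le s\le k-1$, closes at once.
\begin{itemize}
\item $G-B$ is $(k-s)K_p$-free. Each vertex of $B$ has at least $\varepsilon n$ neighbours in every part, so the vertices of $B$ extend to disjoint $K_p$'s avoiding any bounded set of vertices.
\item Hence, by induction, $e(G)\le\binom{k-s-1}{2}+(k-s-1)(n-k+1)+t_{n-k+1,p-1}+s(n-s)+\binom{s}{2}$.
\item The identity $\binom{k-1}{2}=\binom{k-1-s}{2}+\binom{s}{2}+s(k-1-s)$ shows this bound is exactly $e(K_{k-1}\vee T_{n-k+1,p-1})$.
\item Equality holds only if $B$ is universal and $G-B$ is extremal, which also gives uniqueness. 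For $s=k-1$ this is just Theorem~\ref{Turan}, so no greedy embedding is needed there.
\end{itemize}

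The real content is $s=0$, and it can be handled as follows:
\begin{itemize}
\item Cross edges number at most $t_{n,p-1}$, so at least $e(G)-t_{n,p-1}\ge\frac{(k-1)n}{p-1}-O(1)$ edges lie inside parts.
\item Every own-part degree is below $\varepsilon n$, so these edges contain a matching of size $k$ once $\varepsilon<\frac{k-1}{3k(p-1)}$ and $n$ is large.
\item Under the minimum-degree condition from Gap 1, the ends of each matching edge have linear common neighbourhoods in every other part. The matching edges therefore extend to $k$ disjoint $K_p$'s, a contradiction.
\end{itemize}

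\textbf{Minor point.} In Step~2, the drop $\ex(n,kK_p)-\ex(n-p,(k-1)K_p)$ is about $(p-1)n$, not $p\frac{p-2}{p-1}n$, since lowering $k$ also removes a universal vertex. Your conclusion survives, but Steps~2--3 are never used afterwards.
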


A natural problem is to determine the Tur\'{a}n numbers of disjoint unions of cliques for all $n$. However, exact values are known only for a limited number of cases. In 1959, Erd\H{o}s and Gallai \cite{ErdosGallai1959} determined $\ol{\ex}(n,kK_2)$ and $\ol{\EX}(n,kK_2)$ for all positive integers $n$ and $k$. More recently, Chen, Lu and Yuan \cite{CLY} determined the Tur\'{a}n number of $2K_p$ for all $n$. Subsequently, Zhang and Yin \cite{ZhangYin2024Klambda} determined the value of $\ex(n,K_p\cup K_q)$ for all $n,q$ and $p\in\{2,3\}$. This result was later extended by Hu \cite{Hu2024}, who determined $\ex(n,K_p\cup K_q)$ for all $p$ and $q$, and Luo \cite{LuoZ} characterized all extremal graphs for $K_p\cup K_q$.

For positive integers $a\le b$ and nonnegative integers $x,y,n$ satisfying $ax+by\le n$, let 
\begin{equation}\label{Kab}
  K_{a,b}(n,x,y)=xK_a\cup yK_b\cup\ol{K}_{n-ax-by}  
\end{equation}
{ Also, for each integer $z$ with $0\le z\le p-1$, let $$\KK_{3,4}(n,z)=\left\{K_{3,4}(n,x,y): x+2y=n-p-q+1\mbox{ and }x+y\le z\right\},$$ and let  $\KK_{3,4}(n)=\KK_{3,4}(n,p-1).$}

\begin{thm}[\cite{Hu2024,LuoZ}]\label{cite1}

Let $n$, $p$, $q$ be positive integers with $p>q\ge 2$ and $n\ge p+q$. Then $$\cex(n,K_p\cup K_q)=\left\{\begin{array}{lll} 3(n-p-q+1), & \mbox{if $n\le p+q+\max\{2q-p,\lfloor\frac{q}{2}\rfloor-1\}$,}\\
\ol{t}_{n,p-1},  & \mbox{if $n>p+q+\max\{2q-p,\lfloor\frac{q}{2}\rfloor-1\}$.}\end{array}\right.$$

Furthermore, if $p-1=q$, then $$\CEX(n,K_p\cup K_q)=\left\{\begin{array}{lll} \KK_{3,4}(n), & \mbox{if $n< 4q$,}\\
\{\ol{T}_{n,p-1}\},  & \mbox{if $n\ge 4q$.}\end{array}\right.$$

If $q+1\le p-1<\frac{3q}{2}$, then $$\CEX(n,K_p\cup K_q)=\left\{\begin{array}{lll} \KK_{3,4}(n), & \mbox{if $n< 3q$,}\\
\KK_{3,4}(n)\cup\{\ol{T}_{n,p-1}\}, & \mbox{if $n=3q$,}\\
\{\ol{T}_{n,p-1}\},  & \mbox{if $n>3q$.}\end{array}\right.$$

If $p-1\ge\frac{3q}{2}$, then $$\CEX(n,K_p\cup K_q)=\left\{\begin{array}{lll} \KK_{3,4}(n), & \mbox{if $n< p-1+\frac{3q}{2}$,}\\
\KK_{3,4}(n)\cup\{\ol{T}_{n,p-1}\}, & \mbox{if $n=p-1+\frac{3q}{2}$,}\\
\{\ol{T}_{n,p-1}\},  & \mbox{if $n>p-1+\frac{3q}{2}$.}\end{array}\right.$$

\end{thm}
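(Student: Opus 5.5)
We work in the complement. Let $G$ be an $n$-vertex graph with no two disjoint independent sets $A,B$ with $|A|=p$, $|B|=q$. We want to show $e(G)\ge\min\{3(n-p-q+1),\ol{t}_{n,p-1}\}$ and to identify when equality holds. The threshold then comes from comparing the two constructions. Each $K_{3,4}(n,x,y)$ with $x+2y=n-p-q+1$ has $3x+6y=3(n-p-q+1)$ edges. The bound $\ol{t}_{n,p-1}$ is attained by $\ol{T}_{n,p-1}$, which has no independent set of size $p$ at all. A routine computation shows that $3(n-p-q+1)\le\ol{t}_{n,p-1}$ exactly when $n\le p+q+\max\{2q-p,\lfloor q/2\rfloor-1\}$; this splits according to whether $n-p+1\le q$ or the cliques of $\ol{T}_{n,p-1}$ have size $2$ or $3$. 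We also need to check that $K_{3,4}(n,x,y)$ is admissible. Any independent set takes at most one vertex from each clique, so the largest one has size $n-2x-3y$. Two disjoint independent sets can use at most two vertices of each clique (here $x+y\le p-1$ is used), which gives total size at most $n-x-y<p+q$.

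For the lower bound we take a minimal counterexample and induct on $n$. The base case $n=p+q$ is trivial. By Theorem~\ref{Turan} in complement form, if $G$ has no independent set of size $p$ then $e(G)\ge\ol{t}_{n,p-1}$, and we are done. Otherwise fix a maximum independent set $A$ with $|A|\ge p$, and let $R=V(G)\setminus A$. Every $p$-subset $A'\subseteq A$ together with $R\cup(A\setminus A')$ must contain no independent $q$-set, so the structure of $G$ near $R$ is heavily constrained. Then we run the discharging argument. Each vertex starts with charge $d(v)$, so the total is $2e(G)$, and we aim to show the total is at least $6(n-p-q+1)$. The counting goes as follows. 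Choose disjoint independent sets $A_0$ of size $p$ and $B_0$ of size as large as possible, say $q-s$. The remaining $n-p-q+s$ vertices each have edges into $A_0\cup B_0$ that block augmentation. A vertex with no neighbours in $B_0$ extends $B_0$, and a vertex with no neighbours in $A_0$ could swap into $A_0$. Exchange arguments then force each leftover vertex, or each pair of leftover vertices, to contribute about three edges, which is the $K_4$ cost: $6$ edges absorb $2$ units of excess. The $K_3$ cost is $3$ edges per unit.

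For the characterization of extremal graphs, equality in the discharging forces every leftover vertex to lie in a triangle or $K_4$ component, and forces all other vertices to be isolated. The condition $x+y\le p-1$ comes from the admissibility check above. When $n$ exceeds the threshold, equality forces the Turán case, with uniqueness given by Theorem~\ref{Turan}. At the exact threshold both families survive. The refined case distinctions in the statement ($p-1=q$ versus $q+1\le p-1<3q/2$ versus $p-1\ge 3q/2$) come from checking which of these thresholds are integers and when $\ol{T}_{n,p-1}$ ties.

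The main obstacle is the exchange argument showing that each excess vertex costs $3$ edges. This is delicate when $G$ has an independent set much larger than $p$ and vertices have low degree. I would handle it by removing a vertex of degree at most $2$ and applying induction, checking that $\ol{t}$ drops by at most the degree. This requires $n-1$ to stay in the same regime, so the boundary $n$ near the threshold will need a separate direct argument.
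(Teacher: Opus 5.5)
\textbf{There is a genuine gap: nothing in your plan handles large $n$.} You reduce everything to proving $e(G)\ge\min\{3(n-p-q+1),\ol t_{n,p-1}\}$, and you assert that $3(n-p-q+1)\le\ol t_{n,p-1}$ holds exactly up to the threshold. That assertion is false.

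Since $\ol T_{n,p-1}$ is $p-1$ disjoint cliques of order about $n/(p-1)$, $\ol t_{n,p-1}$ is quadratic in $n$, while $3s$ is linear. On $[3(p-1),4(p-1)]$ one has $\ol t_{n,p-1}-3s=3(q-p+1)\le 0$. Beyond that, each new vertex raises $\ol t_{n,p-1}$ by $\lfloor (n-1)/(p-1)\rfloor\ge 4$ but raises $3s$ only by $3$. So $3s<\ol t_{n,p-1}$ again for all large $n$.

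For example, with $p=4$, $q=2$ the threshold is $6$, yet at $n=16$ we have $3s=33<35=\ol t_{16,3}$. No $K_{3,4}(16,x,y)$ exists there, since $3x+4y=2s+x=22+x>16$. So in this regime your $\min$ equals $3s$, which is strictly below the true value. What must be shown is $e(G)\ge\ol t_{n,p-1}$ for every $G$ containing $\ol K_p$ but no disjoint $\ol K_p,\ol K_q$. That bound is far above $3s$, and no exchange or discharging count aimed at three edges per excess vertex can deliver it. Uniqueness has the same problem: Tur\'an's theorem only identifies extremal graphs with no $\ol K_p$, so for large $n$ you must separately exclude graphs that contain $\ol K_p$ and have exactly $\ol t_{n,p-1}$ edges.

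\textbf{The missing idea is the paper's Theorem~\ref{PQPQR}.} The statement itself is quoted from Hu and Luo, but the paper's own tools recover it, and this is the step you lack. If an $n$-vertex graph $H$ had $\Delta(H)<\lfloor (n-1)/(p-1)\rfloor$, Theorem~\ref{equitable} (Hajnal--Szemer\'edi) would give an equitable partition of $V(H)$ into $\lfloor (n-1)/(p-1)\rfloor$ independent sets. Counting then forces one class of size at least $p$ and, by equitability, another of size at least $p-1\ge q$. Hence every extremal $H$ has a vertex $v$ with $d_H(v)\ge\lfloor (n-1)/(p-1)\rfloor=\ol t_{n,p-1}-\ol t_{n-1,p-1}$, and deleting $v$ and inducting gives $e(H)\ge\ol t_{n,p-1}$. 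Part~(2) of that theorem gives uniqueness. The induction starts at $n=4(p-1)$, where Theorem~\ref{PQ} together with Tur\'an already yields $\ol t_{n,p-1}$ (Lemma~\ref{3r+3s}, Claim~\ref{claim9.2}(i)).

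\textbf{Your deletion step runs the wrong way.} The vertex deleted above has \emph{large} degree. You propose deleting a vertex of degree at most $2$, but deletion only gives $e(H)\ge\cex(n-1,K_p\cup K_q)+d_H(v)$, so $d_H(v)$ must be at least the increment of the bound. The same holds inside Theorem~\ref{PQ}: there one deletes a vertex of degree at least $3$ outside $P$ to prove Claim~\ref{PQx2}, that every vertex outside $P$ has degree at most $2$. That degree bound is what drives the auxiliary-digraph and discharging count, which you describe only vaguely.

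\textbf{A minor slip in the admissibility check.} Two disjoint independent sets in $K_{3,4}(n,x,y)$ have total size at most $2(x+y)+(n-3x-4y)=n-x-2y=p+q-1$, not $n-x-y$; the latter exceeds $p+q-1$ whenever $y>0$. Also, the condition $x+y\le p-1$ is not used in this check; it follows automatically from $3x+4y\le n$.
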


Brualdi and Mellendorf \cite{BrualdiMellendorf1994} and independently Zhang \cite{ZhangJ} determined $\ex(kp,kK_p)$ and $\EX(kp,kK_p)$ for all $k$ and $p$. Recently, Zhang and Yin \cite{ZhangYin2024ThreeCliques} and independently Zhang \cite{ZhangJ} determined the values of $\ex(n,3K_p)$ for all $n$, and Kostochka, Peng and Zhang \cite{KostochkaPengZhang2025} determined the values of $\ex(n,4K_p)$ and $\EX(n,4K_p)$ for all $n$.

In this paper, we  determine $\cex(n,K_p\cup K_q\cup K_r)$ and $\CEX(n,K_p\cup K_q\cup K_r)$ 
for all $p\geq q\geq r\geq 2$ and $n\ge p+q+r$. We first determine the value for the case $n=p+q+r$ and the corresponding extremal graphs.  Note that the special case $p=q=r=2$ was already settled in \cite{ErdosGallai1959}, where it was shown that $\cex(6,3K_2)=5$ and $\CEX(6,3K_2)=\{K_{1,5}\}$.

\begin{thm}\label{thm11}
Let $p\ge q\ge r\ge 2$, $(p,q,r)\not=(2,2,2)$ and $n=p+q+r$. Then $$\cex(n,K_{p}\cup K_{q}\cup K_{r})=\min\{6,\overline{t}_{n,p-1}\}\quad \mbox{and }$$ $$\CEX(n,K_{p}\cup K_{q}\cup K_{r})=\begin{cases}
\{K_4\cup\ol{K}_3, K_{1,6}\}, &
\mbox{if $(p,q,r)=(3,2,2)$,}\\
\{K_4\cup\ol{K}_{n-4}, \overline{T}_{n,p-1}\}, &
\begin{array}[t]{@{}l@{}}
\mbox{if $(p,q,r)=(5,2,2)$}\\
\mbox{or $p\ge 7$ and $(q,r)=(3,2)$,}
\end{array}\\
\{\overline{T}_{n,p-1}\}, &
\mbox{if $p\ge 6$ and $q=r=2$,}\\
\{K_4\cup\ol{K}_{n-4}\}, &
\mbox{otherwise.}
\end{cases}$$
\end{thm}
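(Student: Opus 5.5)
We work throughout in the complement. We must show that every graph $G$ on $n=p+q+r$ vertices with fewer than $\min\{6,\ol{t}_{n,p-1}\}$ edges admits a partition $V(G)=A\cup B\cup C$ into independent sets with $|A|=p$, $|B|=q$, $|C|=r$. We must also determine exactly which graphs with that many edges admit no such partition.

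\textbf{Lower-bound constructions.} First check the two constructions.
\begin{itemize}
\item In $K_4\cup\ol{K}_{n-4}$, any partition into three independent sets puts two vertices of $K_4$ in the same part. So $K_4\cup\ol{K}_{n-4}$ has no admissible partition, and it has $6$ edges.
\item In $\ol{T}_{n,p-1}$, an independent set of size $p$ must take vertices from $p$ distinct classes, but there are only $p-1$ classes. So it does not even contain $K_p$.
\end{itemize}
Hence $\cex\ge\min\{6,\ol{t}_{n,p-1}\}$.

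A short computation then identifies when $\ol{t}_{n,p-1}<6$, $=6$, or $>6$. Write $n=p+q+r$, split into $p-1$ nearly equal classes. This gives the case list in the statement: $(3,2,2)$ with $n=7$, two classes, versus the star $K_{1,6}$; $(5,2,2)$; $p\ge6$ with $q=r=2$; $p\ge7$ with $(q,r)=(3,2)$. In these borderline cases the other candidates ($K_{1,6}$, etc.) also appear.

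\textbf{Upper bound.} Let $G$ have $m\le\min\{6,\ol{t}_{n,p-1}\}$ edges. The plan is to reduce to Theorem~\ref{cite1} by peeling off the smallest part.

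Choose an independent set $C$ of size $r$ such that $G-C$ has an independent $p$-set and $q$-set partition. The natural attempt is to make $C$ contain high-degree vertices, so that removing it deletes many edges. More precisely, we want $e(G-C)<\cex(n-r,K_p\cup K_q)$. Since $n-r=p+q$, Theorem~\ref{cite1} (or Tur\'an's theorem when $p=q$, handled directly) gives the threshold
\[
\cex(p+q,K_p\cup K_q)=\min\{3,\ol{t}_{p+q,p-1}\}.
\]
So it suffices to find an independent $r$-set $C$ covering at least $m-2$ edges (respectively enough edges to fall below $\ol{t}_{p+q,p-1}$).

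Since $m\le6$, the graph has few edges, and we would split into cases by the structure of $G$:
\begin{enumerate}
\item[(a)] Some vertex $v$ has degree $\ge m-2$. Put $v$ in $C$, together with $r-1$ further vertices nonadjacent to $v$ and to each other. Such vertices are available since at most $6$ edges touch at most $12$ vertices, while isolated vertices exist whenever $n$ is large. Small $n$ needs a direct check.
\item[(b)] Otherwise the maximum degree is small. Choose $C$ as an independent set of size $r\ge2$ hitting edges from two vertex-disjoint high-degree vertices.
\end{enumerate}
When the Tur\'an-type bound $\ol{t}_{n,p-1}<6$ is binding (large $p$, small $q,r$), $G$ is very sparse. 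Then a direct greedy argument works: pick a maximum independent set of size $\ge p$ as $A$ first, and show the rest splits. This uses that $G$ has no $\ol{T}_{n,p-1}$-like structure.

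\textbf{Uniqueness and main obstacle.} For uniqueness, we trace equality. When $m=\min\{6,\ol{t}_{n,p-1}\}$ and $G$ has no admissible partition, every step of the argument above must be tight. We would show this forces either a $K_4$ (all six edges on four vertices) or equality in Tur\'an's theorem (Theorem~\ref{Turan}) applied to the independent-set condition, giving $\ol{T}_{n,p-1}$. In the case $(3,2,2)$ it can instead force the star $K_{1,6}$, whose leaves form a single independent $6$-set but whose center blocks every split.

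The main obstacle is exactly this extremal case analysis with $m=6$ and small $n$. Graphs such as $K_{1,6}$, a triangle plus a matching, and $2K_3$ must each be checked to either admit a partition or be excluded. We would organize this check by the size of a maximum matching and by the maximum degree.
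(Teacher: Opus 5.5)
\textbf{The upper-bound step has a genuine gap.} Because you peel off the $r$-part first, you work on $n-r=p+q$ vertices, where the two-clique threshold is only $\cex(p+q,K_p\cup K_q)=3$. Your method therefore needs an independent $r$-set $C$ meeting at least $m-2$ edges of $G$, and for sparse graphs with small $r$ no such set exists. Take $(p,q,r)=(5,5,2)$, $n=12$, where $\min\{6,\ol{t}_{12,4}\}=6$.
\begin{itemize}
\item For $G=5K_2\cup\ol{K}_2$ we have $m=5$, so the theorem asserts a partition exists.
\item For $G=6K_2$ you must show $G$ is not extremal.
\end{itemize}
In both graphs every independent $2$-set meets at most two edges, so $G-C$ keeps $3$ or $4$ edges and the threshold says nothing. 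Indeed, $K_3\cup\ol{K}_7$ on $10$ vertices has only $3$ edges and no two disjoint independent $5$-sets. Your case (b) relies on two disjoint high-degree vertices, and this is exactly the situation where there are none. Graphs like $P_3\cup 4K_2$ or $K_3\cup 3K_2$ cause the same trouble. Repairing this would need the full list of $3$-edge extremal graphs for $K_p\cup K_q$ on $p+q$ vertices, plus $4$-edge information that Theorem~\ref{cite1} does not give. The uniqueness part is also only asserted (``every step must be tight''); since the inequality argument is incomplete, there is nothing yet to trace equality through. Also, your constructions show $\cex\le\min\{6,\ol{t}_{n,p-1}\}$, not $\ge$.

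\textbf{The missing idea is to reverse the order.} On all $n=p+q+r$ vertices the two-clique threshold is $3(n-p-q+1)=3(r+1)\ge 9>6$. The argument then runs as follows.
\begin{itemize}
\item By Theorem~\ref{Turan}, either $G=\ol{T}_{n,p-1}$ or $G$ has an independent $p$-set.
\item In the latter case Theorem~\ref{PQ} gives disjoint independent sets $P,Q$ of sizes $p,q$ for free, since its exceptional outcomes need $e(G)=3(r+1)$.
\item Theorem~\ref{PQR} with $s=1$ then supplies $R$ whenever $e(G)\le 5$. Its outcome (b) would need $10x+15y=5$, and (c) would force $G=K_{1,5}$, i.e.\ $(p,q,r)=(2,2,2)$.
\item Only $e(G)=6$ needs a hands-on argument. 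If $\Delta(G)=6$, then $G=K_{1,6}$ with $n=7$, so $(p,q,r)=(3,2,2)$; otherwise an isolated vertex lets you build the partition.
\item If $\Delta(G)\le 5$, choose $P,Q$ to minimize $e(G-P-Q)$ and then maximize $e[P,V(G)\setminus P]$. Switching arguments then force $G=K_4\cup\ol{K}_{n-4}$.
\end{itemize}
The case list follows by computing when $\ol{t}_{n,p-1}$ is below, equal to, or above $6$.
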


{ For $p\ge q\ge r$, let $c_q=\left\{\begin{array}{lll} 0, & \mbox{if $p>q$,}\\ 
1,  & \mbox{if $p=q$,}\end{array}\right.$ and $c_r=\left\{\begin{array}{lll} 0, & \mbox{if $p>r$,}\\ 
1,  & \mbox{if $p=r$.}\end{array}\right.$} Define 
$c=c_q+c_r$.
{ Also, for each integer $z$ with $0\le z\le p-1$, let $$\KK_{5,6}(n,z)=\left\{K_{5,6}(n,x,y): 2x+3y=n-p-q-r+1\mbox{ and }x+y\le z\right\},$$ and let $\KK_{5,6}(n)=\KK_{5,6}(n,p-1).$} Furthermore, for $p=q$ and $\frac{3(p+q)}{2}-2\le n\le 2p+2q-4$,  let $$K_{3,4}^*(n)=(2p+2q-4-n)K_3\cup (n-\frac{3(p+q)}{2}+2)K_4\cup K_{1,3}.$$ For $p=q=r$ and $\frac{5(p+q+r)}{3}-4\le n\le 2p+2q+2r-6$, we let $$K_{5,6}^*(n)=(2p+2q+2r-6-n)K_5\cup (n-\frac{5(p+q+r)}{3}+4)K_6\cup K_{1,5}.$$ Our main results are as follows.

\begin{thm}
\label{pqr2}
Let $n$, $p$, $q$, and $r$ be positive integers with $p\ge q\ge r\ge 2$ and $n\ge p+q+r+1$, and set $s=n-p-q-r+1$.

1)  If $n\in [p+q+r+1, 4(p-1)+c]$, then 
\begin{equation}\label{mt1}
  \cex(n,K_p\cup K_q\cup K_r)= \min\{5s,3(s+r),\ol{t}_{n-c,p-1}\}.  
\end{equation}

2) If $n\in [4(p-1)+c, 5(p-1)+c]$, then 
\begin{equation}\label{mt2}
  \cex(n,K_p\cup K_q\cup K_r)= \min\{5s,\ol{t}_{n-c,p-1}\}.  
\end{equation}

3) If $n\geq  5(p-1)+c$, then 
$ \cex(n,K_p\cup K_q\cup K_r)= \ol{t}_{n-c,p-1}$.  
\end{thm}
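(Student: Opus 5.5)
We work throughout in the complement. A graph $G$ on $n$ vertices is \emph{good} if it has no three pairwise disjoint independent sets of sizes $p,q,r$. The claim is that the minimum number of edges of a good graph equals the stated minimum.

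\textbf{Upper bound.} We exhibit three constructions.
\begin{itemize}
\item $K_{5,6}(n,x,y)$ with $2x+3y=s$ has $10x+15y=5s$ edges. Each $K_5$ and each $K_6$ meets an independent set in at most one vertex, so three disjoint independent sets cover at most $3(x+y)+(n-5x-6y)=n-2x-3y=n-s<p+q+r$ vertices. Hence the graph is good.
\item Take $K_{3,4}(n-r-\ldots)$-type graphs: a $K_{3,4}$ configuration absorbing $s+r$ vertices, i.e.\ $x$ copies of $K_3$ and $y$ copies of $K_4$ with $x+2y=s+r$, which gives $3(s+r)$ edges. The two larger sets $p,q$ together can use at most $2$ vertices per clique. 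The parameters should be chosen so that after placing the $p$- and $q$-sets, fewer than $r$ vertices remain. Concretely, I would verify that $K_{3,4}(n,x,y)$ with $x+2y=n-p-q+1$ forbids $K_p\cup K_q$ (Theorem~\ref{cite1}) and needs $3(n-p-q+1)=3(s+r)$ edges.
\item $\ol{K}_c\cup \ol{T}_{n-c,p-1}$: here $\ol T_{n-c,p-1}$ has no independent set of size $p$, and an independent set of size $p$ must use isolated vertices. Checking the $c$ cases gives $\ol{t}_{n-c,p-1}$ edges. The constraint $x+y\le p-1$ in $\KK_{5,6}(n,p-1)$ explains the interval endpoints, since beyond $4(p-1)+c$ or $5(p-1)+c$ the clique constructions cannot be fitted, or are beaten.
\end{itemize}

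\textbf{Lower bound.} We argue by induction on $n$, with base case $n=p+q+r$ given by Theorem~\ref{thm11}. Let $G$ be good with fewer edges than claimed.

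First, if $G$ has no independent $p$-set then Turán gives $e(G)\ge \ol t_{n,p-1}\ge \ol t_{n-c,p-1}$. So assume there is an independent set $A$ of size $p$. Then $G-A$ (on $n-p$ vertices) has no $\ol{K_q\cup K_r}$ structure, so Theorem~\ref{cite1} bounds $e(G-A)$ from below. This alone is too weak, because edges between $A$ and the rest are not counted.

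So instead I would use a discharging/deletion argument. Choose a vertex $v$ of minimum degree.
\begin{itemize}
\item If $d(v)\ge 5$ (or large relative to the target increments), then $e(G)\ge 5n/2$ already beats the bounds in the relevant ranges.
\item Otherwise delete $v$ together with a suitable small closed neighbourhood. This reduces $s$ by one while losing at most $5$ edges for the $5s$ bound, or $3$ edges for the $3(s+r)$ bound. Then apply induction, noting that the increments of $\ol t_{n-c,p-1}$ in $n$ are $\lfloor (n-c)/(p-1)\rfloor$, which is at most $5$ exactly when $n\le 5(p-1)+c$ and at most $3$ when $n\le 4(p-1)+c$ roughly.
\end{itemize}
The key step is a lemma that a vertex of degree $\le 4$ in a good graph can be removed so that $G-v$ is good for $(n-1)$, or else its neighbourhood forces a $K_5$ or $K_6$ component (respectively a $K_3$ or $K_4$ component) that can be deleted wholesale while preserving goodness for smaller parameters. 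The lost-edge count must be compared to the increment of each branch of the minimum.

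\textbf{Main obstacle.} The hard part is this local-structure lemma. We must show that if a low-degree vertex $v$ cannot simply be deleted, then by augmenting independent sets through $v$ and swapping vertices among the three disjoint sets, the component of $v$ is a small clique. The interplay with the cases $p=q$ or $p=r$ (the constant $c$) also needs care, since there isolated vertices can serve in several sets. I would handle it by a discharging scheme: assign charge $d(u)-5$ (or $d(u)-3$ in range 1) and show that a minimal counterexample has total charge $\ge 0$ outside the allowed clique components. Tight regions near the interval endpoints are then settled by direct comparison with Theorem~\ref{cite1} applied to $G-A$.
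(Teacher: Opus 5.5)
Your three upper-bound constructions are the right ones. You still need to check that they fit, i.e.\ $5x+6y\le n$ or $3x+4y\le n$ whenever the corresponding term is the minimum, which is what Lemmas~\ref{3r+3s} and~\ref{5s} do.

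The lower bound, however, has a genuine gap. Since $e(G)=e(G-v)+d(v)$, and the targets $5s$, $3(s+r)$, $\ol t_{n-c,p-1}$ grow by $5$, $3$ and $\lfloor (n-c-1)/(p-1)\rfloor$ when $n$ grows by one, an induction on $n$ needs a vertex of \emph{large} degree. A vertex of small degree is exactly where deletion fails, and ``losing at most $5$ edges'' is the wrong inequality: you must remove at least $5$. Your key lemma does not repair this. $G-v$ is always good because goodness is hereditary, so its dichotomy is vacuous. The alternative of deleting a neighbourhood and lowering $p,q,r$ is never specified, and lowering $p$ changes both $c$ and $\ol t_{n-c,p-1}$. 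The charge $d(u)-5$ summed over $V(G)$ aims at $e(G)\ge 5n/2$. That inequality is false for extremal graphs $K_{5,6}(n,x,y)$ having $K_5$- or $K_1$-components, and it never involves $p+q+r$. Finally, the observation that $\delta(G)\ge 5$ gives $e(G)\ge 5n/2\ge \ol t_{n-c,p-1}$ only works roughly for $n\le 6(p-1)+c$, since $\ol t_{n-c,p-1}$ grows quadratically. So part~3 is not addressed for larger $n$.

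The missing idea is the paper's layered case split:
\begin{itemize}
\item If $G$ has no independent $p$-set, Tur\'an's theorem applies.
\item If $G$ has one but no disjoint independent $p$- and $q$-sets, Theorem~\ref{PQ} gives $e(G)\ge 3(n-p-q+1)=3(s+r)$.
\item If $G$ has disjoint independent sets $P,Q$ of sizes $p,q$, Theorem~\ref{PQR} gives $e(G)\ge 5s$.
\end{itemize}
In Theorem~\ref{PQR} one takes $R$ maximum independent in $G-P-Q$, so the leftover set $S$ has $|S|\ge s$. In a minimal counterexample every vertex of $S\cup R$ has degree at most $4$; this is proved by deleting a vertex of degree at least $5$, the opposite of your step. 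The auxiliary digraph of movable vertices, with its accessible classes and solo neighbours, then shows case by case (on $|\cB|$) that each vertex of $S$ accounts for $5$ edges. Charging per leftover vertex rather than per vertex of $G$ is what makes isolated vertices free and produces $s$.

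For large $n$ the paper uses Theorem~\ref{PQPQR}. By the Hajnal--Szemer\'edi theorem, a good graph has a vertex of degree at least $\lfloor (n-c-1)/(p-1)\rfloor$, which is exactly the increment of $\ol t_{n-c,p-1}$. Deleting that vertex carries the induction upward from $n=4(p-1)+c$ (resp.\ $5(p-1)+c$), where $\ol t_{n-c,p-1}$ is already at most $3(s+r)$ (resp.\ $5s$).
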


{\begin{thm}\label{EXpqr2}
    Let $p\ge q\geq r\geq 2$, $n\geq p+q+r+1$ and $s=n-p-q-r+1$. Let $\mathcal{H}_0(n),\mathcal{H}_1(n),\mathcal{H}_2(n)$ be pairwise disjoint families such that $\CEX(n,K_p\cup K_q\cup K_r)=\mathcal{H}_0(n)\cup\mathcal{H}_1(n)\cup\mathcal{H}_2(n)$, where $\mathcal{H}_0(n)$ consists of graphs containing no $\ol{K}_p$, $\mathcal{H}_1(n)$ consists of graphs containing $\ol{K}_p$ and no disjoint induced $\ol{K}_p$ and $\ol{K}_q$, and $\mathcal{H}_2(n)$ consists of graphs containing disjoint induced $\ol{K}_p$ and $\ol{K}_q$. Then

    (1) If $c=0$, then $\mathcal{H}_0(n)=\{G=\overline{T}_{n,p-1}: \cex(n,K_p\cup K_q\cup K_r)=\ol{t}_{n,p-1}\}$, $$\mathcal{H}_1(n)=\{G\in{ \KK_{3,4}(n,q-1)}: \cex(n,K_p\cup K_q\cup K_r)=3(s+r)\},$$ and $$\mathcal{H}_2(n)=\{G\in{ \KK_{5,6}(n,r-1)}: \cex(n,K_p\cup K_q\cup K_r)=5s\}.$$

    (2) If $c=1$, then $\mathcal{H}_0(n)=\emptyset$, $$\begin{aligned}
    \mathcal{H}_1(n)=&\{G\in\KK_{3,4}(n): \cex(n,K_p\cup K_q\cup K_r)=3(s+r)\}\\
    &\cup\{G=K_{3,4}^*(n): \cex(n,K_p\cup K_q\cup K_r)=3(s+r) \mbox{ and }\tfrac{3(p+q)}{2}-2\le n\le 2p+2q-4\}\\
    &\cup\{G=\ol{K}_1\cup\overline{T}_{n-1,p-1}: \cex(n,K_p\cup K_q\cup K_r)=\ol{t}_{n-1,p-1}\}.
    \end{aligned}$$ and $$\mathcal{H}_2(n)=\{G\in{ \KK_{5,6}(n,r-1)}: \cex(n,K_p\cup K_q\cup K_r)=5s\}.$$

    (3) If $c=2$, then $\mathcal{H}_0(n)=\emptyset$, $\mathcal{H}_1(n)=\emptyset$ and $$\mathcal{H}_2(n)=\left\{\begin{array}{lll} \KK_{5,6}(n), & \mbox{if $n\le 5p-5$,}\\
    \KK_{5,6}(n)\cup\{K_{5,6}^*(n)\}, & \mbox{if $5p-4\le n\le 6p-6$,}\\ 
    \{\ol{K}_2\cup\overline{T}_{n-2,p-1}\},  & \mbox{$n\ge 6p-5$.}\end{array}\right.$$
\end{thm}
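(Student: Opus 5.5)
The statement is a characterization, so I would derive it from Theorem~\ref{pqr2}, whose value $\cex(n,K_p\cup K_q\cup K_r)$ I assume known. We work throughout in the complement. Let $G$ be an $n$-vertex graph with no disjoint independent sets of sizes $p,q,r$ and with exactly $\cex(n,K_p\cup K_q\cup K_r)$ edges. The split $\mathcal{H}_0,\mathcal{H}_1,\mathcal{H}_2$ is by the largest structure of disjoint independent sets that $G$ contains, and I treat the three families separately.

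\emph{The family $\mathcal{H}_0$.} If $G$ has no $\ol{K}_p$, then Theorem~\ref{Turan} (complemented) gives $e(G)\ge \ol{t}_{n,p-1}$, with equality only for $\ol{T}_{n,p-1}$. So $\mathcal{H}_0$ is nonempty only when the minimum in Theorem~\ref{pqr2} equals $\ol{t}_{n,p-1}$, which can happen only if $c=0$.

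\emph{The family $\mathcal{H}_1$.} Here $G$ contains an independent $p$-set but no disjoint independent $p$-set and $q$-set. Then $G$ is extremal-type for $K_p\cup K_q$, and Theorem~\ref{cite1} gives $e(G)\ge \cex(n,K_p\cup K_q)$. I would sharpen this bound using the extra information that $G$ is $(K_p\cup K_q\cup K_r)$-free in the complement. Fix an independent $p$-set $P$. The graph $G-P$ has $n-p$ vertices and no independent $q$-set, and every independent set meeting $P$ is constrained as well. When $c=0$, a counting argument should show $e(G)\ge 3(s+r)$, with equality only for the $K_{3,4}$-type graphs with $x+y\le q-1$. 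The bound $x+y\le q-1$ comes from choosing one vertex from each clique to build an independent $q$-set disjoint from the $p$-set. When $c=1$ (so $p=q$), the extra graphs $K^*_{3,4}(n)$ and $\ol{K}_1\cup\ol{T}_{n-1,p-1}$ appear: the isolated vertex together with a Tur\'an part is one natural candidate, and the $K_{1,3}$ component is another. When $c=2$, any independent $p$-set can be shifted so that $q=r=p$ forces a disjoint second copy, so $\mathcal{H}_1=\emptyset$; I would verify this by showing that such a $G$ has more than $5s$ or more than $\ol{t}_{n-2,p-1}$ edges.

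\emph{The family $\mathcal{H}_2$.} Take disjoint independent sets $P$ and $Q$. Every remaining independent set of size $r$ must intersect $P\cup Q$ in a way that blocks the triple. I would use a discharging argument: each vertex outside $P\cup Q$ receives charge from its edges, and one shows that each of the $s$ ``excess'' vertices forces at least $5$ edges. Equality should force components that are $K_5$ or $K_6$ (each absorbing $2$ or $3$ excess vertices), giving $2x+3y=s$, plus the bound $x+y\le r-1$ (or $p-1$ when $c=2$, by symmetry). I would also handle the special configurations $K^*_{5,6}$ and $\ol{K}_2\cup\ol{T}_{n-2,p-1}$ when $c=2$. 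The thresholds $5p-5$ and $6p-6$ come from comparing $5s$ with $\ol{t}_{n-2,p-1}$.

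The main obstacle is the equality analysis in $\mathcal{H}_2$. Proving that extremal graphs are disjoint unions of $K_5$, $K_6$ and isolated vertices (and $K_{1,5}$ in the boundary case) requires tight stability in the discharging. I would attempt it by induction on $n$: delete a maximum-degree vertex or a clique component, apply Theorem~\ref{pqr2} at $n-1$, and use the equality conditions to rebuild the structure. The small cases are anchored by Theorem~\ref{thm11}.
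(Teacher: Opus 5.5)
Your treatment of $\mathcal{H}_0$ is fine: for $c\ge 1$ one has $\ol{t}_{n-c,p-1}<\ol{t}_{n,p-1}$. The rest has genuine gaps.

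\textbf{The equality analysis is missing.} For $\mathcal{H}_1$ and $\mathcal{H}_2$ it is only promised (``a counting argument should show'', ``equality should force''). The induction you propose cannot supply it, because Theorem~\ref{pqr2} gives only the value of $\cex$ and has no equality conditions; applying it at $n-1$ leaves no structure to rebuild from.

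The results that do this work are Theorem~\ref{PQ} (with $n-p-q+1=s+r$ in the role of $s$) and Theorem~\ref{PQR}. They give $e(G)\ge 3(s+r)$ on $\mathcal{H}_1$ and $e(G)\ge 5s$ on $\mathcal{H}_2$. Equality holds exactly for the $K_3/K_4$ (resp.\ $K_5/K_6$) unions and, when $p=q$ (resp.\ $p=q=r$), for the star configurations $K^*_{3,4}(n)$ and $K^*_{5,6}(n)$.

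Several smaller points also need correcting.
\begin{enumerate}
\item No ``sharpening'' is needed for $\mathcal{H}_1$. The chain $\cex(n,K_p\cup K_q\cup K_r)\le \cex(n,K_p\cup K_q)\le e(G)$ forces $G\in\CEX(n,K_p\cup K_q)$. For $c=0$, Theorem~\ref{cite1} then finishes at once.
\item The bound $x+y\le q-1$ (resp.\ $x+y\le r-1$) is not about building a $q$-set disjoint from the $p$-set. It is exactly $\alpha(G)=p+q-1-(x+y)\ge p$ (resp.\ that two disjoint independent sets can cover $p+q+r-1-(x+y)\ge p+q$ vertices). That is, it is the condition for the graph to lie in $\mathcal{H}_1$ (resp.\ $\mathcal{H}_2$) at all.
\item The reverse inclusions also need Lemmas~\ref{lemma8.1} and~\ref{lemma8.2}.
\item $\mathcal{H}_1=\emptyset$ for $c=2$ is not because $q=r=p$ ``forces a disjoint second copy''. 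For instance, $\ol{K}_1\cup\ol{T}_{n-1,p-1}$ contains $\ol{K}_p$ but not two disjoint copies. The reason is extremality: deleting any edge of a graph with no two disjoint $\ol{K}_p$ leaves a graph with no three disjoint $\ol{K}_p$ (Claim~\ref{claim9.1}).
\end{enumerate}

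\textbf{The Tur\'an regime is not covered.} This is the more serious omission. For large $n$ the extremal number is $\ol{t}_{n-c,p-1}$, which grows quadratically and strictly exceeds the linear bounds $5s$ and $3(s+r)$. There the bounds of Theorems~\ref{PQ} and~\ref{PQR} are not attained, so no equality analysis of a linear counting or discharging bound can identify the extremal graphs.

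Yet the theorem asserts that $\ol{K}_c\cup\ol{T}_{n-c,p-1}$ is then the unique extremal graph. This is what gives:
\begin{itemize}
\item $\mathcal{H}_2(n)=\emptyset$ for large $n$ when $c\le 1$;
\item $\mathcal{H}_1(n)=\{\ol{K}_1\cup\ol{T}_{n-1,p-1}\}$ for large $n$ when $c=1$, where Theorem~\ref{cite1} is unavailable since $p=q$;
\item $\mathcal{H}_2(n)=\{\ol{K}_2\cup\ol{T}_{n-2,p-1}\}$ for $n\ge 6p-5$ when $c=2$.
\end{itemize}

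The paper proves this in Claim~\ref{claim9.2}. Uniqueness is checked at the base points $4(p-1)+c_q$ and $6(p-1)+c-1$ via the equality cases of Theorems~\ref{PQ} and~\ref{PQR}. It is then propagated to all larger $n$ by Theorem~\ref{PQPQR}(2), an induction that deletes a vertex of maximum degree in the complement formulation used here. That vertex's degree is forced to be at least $\lfloor (n-c-1)/(p-1)\rfloor$ by the Hajnal--Szemer\'edi theorem (Theorem~\ref{equitable}).

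Your mention of deleting a maximum-degree vertex comes without this degree bound, without a base case, and is aimed at the clique regime. Likewise, ``I would also handle $\ol{K}_2\cup\ol{T}_{n-2,p-1}$'' and the listing of the extra $c=1$ graphs state the answer rather than argue it; the paper derives the latter from Claim~\ref{claim9.3}.
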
}

{ These results extend and generalize Theorem~\ref{cite1} as well as
the result for $\cex(n,K_p\cup K_p\cup K_p)$ established in
\cite{YanZhangPeng2024,ZhangJ,ZhangYin2024ThreeCliques}. }
The key idea is  to consider graphs that already have independent sets $P$ and $Q$ of required size, and try to add an independent set $R$ to them.
The restriction $r\geq 2$ is natural, since for $n\geq p+q+1$, {$\cex(n,K_p\cup K_q\cup K_1)=\cex(n,K_p\cup K_q)$}.

{ The structure of the paper is as follows. In the next section, we 
introduce some more notation and give some definitions. In Section~\ref{sec3} we 
present Theorem~\ref{PQ} on existence of two cliques when we know that one of them exists. Section~\ref{setup} establishes Theorem~\ref{PQR}, a generalization of Theorem~\ref{PQ} that addresses another important special case of the main theorem. The subsequent three sections are devoted to proving Theorem~\ref{PQR} by considering several distinct cases. We prove Theorems~\ref{thm11}, \ref{pqr2} and \ref{EXpqr2} in Sections~\ref{help}, \ref{main} and~\ref{main2} { respectively,} and conclude the paper with some remarks in Section~\ref{remarks}.}

\section{Notation and definitions}

For a vertex $v\in V(G)$ and a subgraph $H\subseteq G$ (or a vertex subset $S\subseteq V(G)$), we write $N_H(v)$ (or $N_S(v)$) for the set of neighbors of $v$ in $H$ (or in $S$). The degree of $v$ in $G$ is denoted by
$d_G(v)=|N_G(v)|$. The minimum and maximum degrees of $G$ are denoted by $\delta(G)$ and $\Delta(G)$, respectively. We write $\ol{G}$ for the complement of $G$ and $\alpha(G)$ for its independence number. 

Given vertex subsets $V_1,V_2\subseteq V(G)$, let $E[V_1,V_2]$ denote the set of edges with one endpoint in $V_1$ and the other in $V_2$, and let $e[V_1,V_2]=|E[V_1,V_2]|$. For a vertex subset $V'\subseteq V(G)$, the subgraph induced by $V'$ is denoted by $G[V']$.

The following definition originates from~\cite{KiersteadKostochka2008}.
{We will use it throughout the proofs.}

\begin{Definition}
Given a graph $G$ and a partition $\mathcal{P}=(V_1,\dots,V_k)$ of $V(G)$, an auxiliary digraph $D$ with respect to $\mathcal{P}$ is a digraph with vertex set $V(D)=\{V_1,\dots,V_k\}$ and edge set $E(D)=\{V_iV_j:\exists x\in V_i, N_G(x)\cap V_j=\emptyset\}$.
\end{Definition}
When the partition $\mathcal{P}$ is clear, we may refer to $D$ simply as the auxiliary digraph.
For an edge $V_iV_j\in E(D)$, if $x\in V_i$ and $N_G(x)\cap V_j=\emptyset$, then we say $x$ is \emph{movable} to $V_j$.
Given $V_{i_1},V_{i_2},\dots, V_{i_\ell}\in V(D)$, if for each $V_{i_j}$ where $1\le j\le \ell-1$, there is a vertex $v_{i_j}$ that is movable to $V_{i_j+1}$, then the directed path $V_{i_1}V_{i_2}\dots V_{i_\ell}$ is an \emph{accessible} path in $D$.
For a given digraph $D$, we may fix some $V_j\in V(D)$ as a \emph{target class.}
Consequently, for $V_i\in V(D)$, if there is an accessible path from $V_i$ to $V_j$, then we say $V_i$ is \emph{accessible}.
Note that the target class $V_j$ is always trivially accessible.
Let $\cA$ be the set of accessible classes in $D$ and $\cB=V(D)\setminus\cA$.
Further, let $A=\bigcup\cA$ and $B=\bigcup\cB$.
For  vertices $u\in B $ and $v\in A$, the edge $uv$ is a \emph{solo edge}  if $v$ is the only neighbor of $u$ in its color class.
In this case, we also say that $u$ and $v$ are {\em solo neighbors}  of each other.

\section{Two cliques}\label{sec3}

\begin{thm} \label{PQ} 
Let $p\geq q\geq 2$, $n\geq p+q$ and $s=n-p-q+1$. If an $n$-vertex graph $G$ contains an induced $\ol{K}_p$ and $e(G)\le3s$,
 then one of the following holds:
 \begin{enumerate}[label=\textup{(\alph*)}, ref=\textup{\alph*}]
     \item\label{PQ:induced}$G$ contains vertex disjoint induced $\ol{K}_p$ and $\ol{K}_q$;
    \item \label{PQ:cliques}$e(G)=3s$, $G$ contains $x$ copies of $K_3$ and $y$ copies of $K_4$, where $3x+6y=3s$ and $3x+4y\leq n$, and all other components are $K_1$s;
    \item \label{PQ:star}$e(G)=3s$, $p=q$, one component of $G$ is the star $K_{1,3}$ and every other component is a $K_3$ or a $K_4$.
 \end{enumerate}
\end{thm}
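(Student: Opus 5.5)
We argue by induction on $n$, using the auxiliary digraph of Kierstead and Kostochka. Let $P$ be the given independent $p$-set (induced $\ol{K}_p$) and set $W=V(G)\setminus P$, so $|W|=n-p=q+s-1$.

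\emph{Base case and reductions.} If $s=1$ then $|W|=q$. If (\ref{PQ:induced}) fails, $W$ is not independent, so $e(G)\ge 1$; but $e(G)\le 3$ allows more. For the base case I would instead take a more flexible induction hypothesis: the statement for $n-1$ (with the same $p,q$ and $s-1$), or for $(p,q)$ replaced by $(p,q-1)$. The natural reduction step is deleting a vertex. Let $v$ be a vertex of maximum degree. If $d(v)\ge 3$, apply the statement to $G-v$. It has $n-1$ vertices and $e(G-v)\le 3s-3=3(s-1)$. If $G-v$ still contains an induced $\ol{K}_p$ (true if $v\notin P$, or if we swap $v$ out of $P$ by a vertex of $W$ with no neighbour in $P-v$), then induction gives (\ref{PQ:induced}) for $G-v$, hence for $G$, or equality. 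Equality forces $d(v)=3$, $e(G)=3s$, and $G-v$ a union of $K_3$'s, $K_4$'s and isolated vertices, or case (\ref{PQ:star}). Then one checks directly how the three edges at $v$ attach. Attaching $v$ to a $K_3$ creates a $K_4$. Attaching $v$ to three isolated vertices creates a $K_{1,3}$, which is allowed only when $p=q$; otherwise one exhibits the two independent sets. Any other attachment produces disjoint independent sets of sizes $p$ and $q$, by counting the independence number: one vertex from each $K_3/K_4$ plus all isolated vertices.

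\emph{Low-degree case.} So assume $\Delta(G)\le 2$, or that the swap above is impossible. When $\Delta\le 2$, $G$ is a union of paths and cycles with at most $3s$ edges. Then I would directly pick two disjoint independent sets greedily: each component $C$ supplies two disjoint independent sets of total size $|C|$ when $C$ is bipartite, or $|C|-1$ for an odd cycle. With components of $\le 3$ edges each costing at most one vertex, this gives disjoint independent sets of total size at least $n-(\#\text{odd cycles})\ge n-s+1=p+q$. The sizes can be balanced to $p$ and $q$ since components contribute in small chunks. Equality forces all components to be triangles plus isolated vertices, which is (\ref{PQ:cliques}) with $y=0$. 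Careful balancing is needed to hit exactly $p\ge q$, and this is where the isolated vertices help.

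\emph{Main obstacle.} The hard step is the case where $v$ of large degree lies in $P$ and cannot be swapped out. Here I would use the auxiliary digraph on the partition $(P,W)$, taking $P$ as the target class. Then $A$ and $B$ and the solo edges give a discharging count: every vertex of $W$ not movable to $P$ has at least one neighbour in $P$. Vertices whose only neighbour in $P$ is a solo neighbour force that neighbour to have high degree, which lets us reallocate. I expect to show that $e[P,W]\ge |W|$ unless some vertex can be exchanged. Combined with the edges needed inside $W$ to kill independent $q$-sets (at least $s$ edges, by a Tur\'an-type count on $|W|=q+s-1$ vertices), this should yield $e(G)>3s$ unless we are in the structured cases. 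Making this counting tight enough to reach exactly the constant $3$, and identifying the $K_4$ and $K_{1,3}$ equality configurations, is the delicate part.
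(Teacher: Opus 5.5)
Your proposal has a genuine gap exactly where you flag the ``main obstacle'', and the count you suggest there cannot close it. With $|W|=n-p=q+s-1$, the bounds $e[P,W]\ge |W|$ and $e(G[W])\ge s$ give only $e(G)\ge q+2s-1$, which is smaller than $3s$ as soon as $s\ge q$. A Tur\'an bound on $G[W]$ ignores the degree information your reduction step has already produced, and that information is what is needed.

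The base case $s=1$ is also left unresolved, since deleting a vertex needs $n-1\ge p+q$. It is easy, though: if $\Delta(G)\ge 3$, then $e(G)\le 3$ forces $G=K_{1,3}\cup\ol{K}_{n-4}$. This gives (a) when $p\ge 3$ and (c) when $p=q=2$.

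The fix is to phrase the reduction as the paper's Claim~\ref{PQx2} does. Among all vertices $v$ with $d(v)\ge 3$ and $\alpha(G-v)\ge p$ (that is, lying outside some independent $p$-set), delete one of maximum degree. This maximality is also what your attachment check needs. For example, it excludes $v$ being adjacent to the centre of a $K_{1,3}$-component of $G-v$, since that centre would then have degree $4$ and lie outside an independent $p$-set.

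In the residual case, every vertex of degree $\ge 3$ lies in every independent $p$-set. So each $w\in W$ has $d(w)\le 2$ and $N_P(w)\neq\emptyset$; otherwise $P+w-u$ is an independent $p$-set avoiding some $u\in P$ of degree $\ge 3$. Hence $G[W]$ is a matching with $m$ edges. Failure of (a) gives $|W|-m=\alpha(G[W])\le q-1$, i.e.\ $m\ge s$. Therefore $e(G)=m+e[P,W]\ge m+|W|\ge 3m\ge 3s$.

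Equality forces $|N_P(w)|=1$ for every $w\in W$. So for each $a\in P$ with a neighbour $w\in W$, the set $P-a+w$ is an independent $p$-set missing $a$, whence $d(a)\le 2$. Thus $\Delta(G)\le 2$, the residual case with $\Delta(G)\ge 3$ cannot occur, and everything reduces to your path/cycle case.

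That case is correct as you sketch it: balance the two sets by flipping odd-order path components, using $\alpha(G)\ge p$. It replaces the paper's second half by something more elementary. That second half is the auxiliary digraph on $(P,Q,S)$ with $Q$ maximum in $G-P$, the cases $|\mathcal{B}|=1,2$, and the discharging.
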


\begin{proof} 
Suppose  there is a counter-example $G$ for the theorem and a set $P\subset V(G)$
 such that $G[P]=\ol{K}_p$. Then~(\ref{PQ:induced}) fails and $e(G)\leq 3s$. If $e(G)=3s$, then neither~(\ref{PQ:cliques}) nor~(\ref{PQ:star}) holds.

Let $Q$ be a maximum independent set in $G-P$ and  $q'=|Q|$. 
Let $S=V(G)-P-Q$ and $s'=|S|$. Then $1\leq q'\leq q-1$ and $s'\geq s$.

Suppose first that $s'=1$. Since $n\geq p+q$, this means $q'=q-1, s'=s=1$ and $n=p+q$. 
Thus, $e(G)\leq 3s=3$.
Choose $P$ so that
\begin{equation}\label{PQ1}
  \mbox{  $G-P$ has the fewest edges.}
\end{equation}

Note that $S\cup Q=V(G)-P$.
If $G-P$ has no edges, we are done, so assume ${ uv}\in E(G)$ for some $u,v\in S\cup Q$. 
We will use the following properties of our partition.
\begin{equation}\label{PQxy}
\parbox{0.9\linewidth}{\it (a) If $u$ or $v$ has no neighbors in $P$, then each $w\in P$ has a neighbor in $Q\cup S$.\\
(b) If $u$ or $v$ has exactly one neighbor $w$  in $P$, then $w$ has another neighbor in $Q\cup S$.}
\end{equation}
Indeed, if (\ref{PQxy}a) or (\ref{PQxy}b) does not hold, then we  switch $u$ (or $v$) with $w$, contradicting \eqref{PQ1}.

Now since $e(G)\le 3$, by (\ref{PQxy}), we must have that $|E(P,Q\cup S)|=2$ and $|P|=2$ or some $w\in P$ is adjacent to both $u$ and $v$. 

If some $w\in P$ is adjacent to both $u$ and $v$, then (b) holds for $G$, contradicting the choice of $G$.
Thus we may assume that each $w\in P$ is adjacent to at most one of $u$ and $v$ and that $|P|=2$. In this case, $p=q=2$,
 $|Q|=q'=1$, and we may assume that $u$ is the only vertex in $S$, and $v$ is the only vertex in $Q$.
Since $|E(P,Q\cup S)|=2$, by~\eqref{PQxy}(b), either $u$ or
$v$ is adjacent to both vertices in $P$.
As $e(G)=3$, we have $G=K_{1,3}$, and (c) holds for $G$,
a contradiction. So $s'\ge 2$.

We choose a triple $(G,P,Q)$ so that
\begin{enumerate}[label=\textup{(M\arabic*)}, ref=\textup{M\arabic*}]
\item \label{PQmin:n} $n:=|V(G)|$ is minimum;
\item \label{PQmin:q'} subject to \ref{PQmin:n}, $q'=|Q|$ is maximum;
\item \label{PQmin:highdeg} subject to (\ref{PQmin:n}) and (\ref{PQmin:q'}), the number of vertices in $P$ of degree at least $3$ is minimum.
\end{enumerate}

\begin{Claim}\label{PQx2}
    For each $u\in S\cup Q$,  $d_G(u)\le 2$.
\end{Claim}
\begin{proof}
Suppose the claim does not hold. Among different choices of $P$ and
 $u\in S\cup Q$ with $d(u)\geq 3$, pick such that $d(u)$ is  maximum, and modulo this, the number of $K_3$-subgraphs containing $u$  is maximum.

Consider $G'=G-u$. It  still contains $P$. If $G'$ contains vertex disjoint induced $\ol{K}_p$ and $\ol{K}_q$, then so does $G$, a contradiction. Otherwise, by the minimality of $n$, $e(G')\geq 3(s-1)$, and in the case of equality
  every component of $G'$   is a $K_3$, a $K_4$ or a $K_1$ (or a $K_{1,3}$ when $p=q$). Since $d(u)\geq 3$,   $d(u)=3$ and $e(G)=3s$.

{\bf Case 1:}
 $u$ is in a $K_4$-subgraph $F$ of $G$. Then $F$ is a component of $G$, since otherwise we can choose a vertex $v$ of a higher degree in $Q\cup S$, possibly moving $u$ into $P$. (If $v\in S\cup Q$, then $d_G(v)>d_G(u)$, contradicting the choice of $u$. If $v\in P$, switch $u$ and $v$ contradicting the choice of $u$). In this case, $F-u$ is a $K_3$-component in $G'$. Thus, if $G'$ satisfies (\ref{PQ:cliques}) or (\ref{PQ:star}) of Theorem~\ref{PQ}, then so does $G$, a contradiction. 

 Since this case is impossible and any $P$ contains at most one vertex from any $K_4$-subgraph of $G$, we conclude that
 \begin{equation}\label{NoK4}
    \mbox{$G$ does not contain $K_4$.}
 \end{equation}

{\bf Case 2:}
 $u$ is in a $K_3$-subgraph $F$ of $G$. Then $F-u$ cannot be in a $K_4$-component of $G'$ by~\eqref{NoK4}.
If $F-u$ is in a $K_{1,3}$-component, say $Y$, then for $G'$ situation (c) holds, i.e. $e(G')=3(s-1)$, $p=q$ and in view of~\eqref{NoK4} every component apart from $Y$ is a $K_3$. To have $e(G')=3(s-1)$, the number of $K_3$-components in $G'$ must be $s-2$. So, $n-1=4+3(s-2)$, and hence $s=p$.
 If $Y$ has center $v$,  then $uv\in E(G)$.
 Now $v$ is not in any independent set of size $p$ in $G'$, and hence belongs to $Q\cup S$. Since $d_G(v)=4>3=d_G(u)$, this contradicts the choice of $u$.
 
 Thus, $F-u$ is in a $K_3$-component, say $Y$ with vertex set $\{v_1,v_2,v_3\}$. By~\eqref{NoK4}, we may assume $N(u)\cap V(Y)   =\{v_1,v_2\}$.
Since $d(u)=3$, each of  $v_1,v_2$ is in two $K_3$-subgraphs of $G$ and at most one of $v_1,v_2$ can be in $P$, this contradicts the choice of $u$ and finishes Case 2.

{\bf Case 3:} Vertex $u$ is in no $K_3$-subgraph  of $G$. 
 If $G'$ has a $K_{1,3}$-component $F$, say with center $v$ and leaves $v_1,v_2,v_3$, then  by the minimality of $n$, part (c) of the theorem holds for $G'$.
This means $e(G')=3(s-1)$, $p=q$ and  by~\eqref{NoK4}
 all  components of $G'-F$ are $K_3$'s. 
So, as in Case 2, the number of $K_3$-components in $G'$ must be $s-2$,  $s=p$  and $v$ is not in any independent set of size $p$ in $G'$; thus $v\in Q\cup S$. If $uv\in E(G)$, then
  $d_G(v)=4>3=d_G(u)$, contradicting the choice of $u$. So, $uv\notin E(G)$.
For $1\leq i\leq s-2$, let the $K_3$-component $Z_i$ have vertex set $\{v_{i,1},v_{i,2},v_{i,3}\}$.
 
 By Case 2, $u$ has at most one neighbor in each $Z_i$, so we may assume $N(u)\cap Z_i\subseteq \{v_{i,3}\}$ for each $i\in [s-2]$.
 But then $\{v_{1,1},\ldots,v_{s-2,1},v_1,v_2\}$ and
 $\{v_{1,2},\ldots,v_{s-2,2},u,v\}$ are two 
  independent sets of size $p$ in $G$ contradicting the choice of $G$.

 The remaining possibility is that each component of $G'$ is a $K_3$- or a $K_1$-component and $u$ has at most one neighbor in each $K_3$-component. 
 Let the isolated vertices in $G'$ be $v_1,v_2,\ldots,v_{k_1}$ and the
$K_3$-components be $Z_1,\ldots,Z_{k_3}$ with $V(Z_i)=\{v_{i,1},v_{i,2},v_{i,3}\}$ for $i=1,\ldots,k_3$,
  so $n-1=k_1+3k_3$.
 
Since $3k_3=e(G')=e(G)-3=3s-3$ and $v(G')=k_1+3k_3$, we have $k_3=s-1$ and
   $p+q+s=n+1=k_1+3(s-1)+2$, implying 
 \begin{equation}\label{4}
     p+q-1=k_1+2(s-1).
 \end{equation}
 Since $P$ has at most one vertex in each $K_3$-component and does not contain $u$, $G$ has an independent set $Q'$ disjoint from $P$ that contains $u$ and one vertex from each $K_3$-component. Since $G$ is a counter-example to our theorem,
 $(s-1)+1=|Q'|\leq q-1$, i.e., $q\geq s+1$. Thus,~\eqref{4} implies $k_1\geq 3$ with equality only if $p=q=s+1$. 
 
 If $k_1\geq 4$, then by~\eqref{4}, 
 $$q\leq \lfloor(p+q)/2\rfloor\leq (s-1)+\lfloor(k_1+1)/2\rfloor\leq (s-1)+ k_1-2.$$
 So, since $u$ has at least $k_1-3$ non-neighbors in $K_1$-components, we can construct a $q$-element independent set $Q'$ by including into it $u$,  one non-neighbor of $u$ from each of the $s-1$ $K_3$-components and 
 $q-1-(s-1)$ isolated vertices of $G'$ not adjacent to $u$, and then construct a $p$-element independent set $P'$ by including into  it one vertex not included into $Q'$ from each of the $s-1$ $K_3$-components  and 
 $p-(s-1)$ isolated vertices of $G'$ not included into $Q'$. Note that by~\eqref{4}, there are $p-(s-1)$ isolated vertices of $G'$ not included into $Q'$. This contradicts the choice of $G$, so $k_1=3$ and $p=q=s+1$.

 Since $u$ has at most one neighbor in each $Z_i$, we may assume that $v_{1,1},\ldots,v_{s-1,1}$ are not adjacent to $u$. So if $u$ is not  adjacent to all of $v_1,v_2,v_3$, say $uv_1\notin E(G)$, then we have disjoint $(s+1)$-element independent sets $Q_1=\{v_{1,1},\ldots,v_{s-1,1},u,v_1\}$ and
$P_1=\{v_{1,2},\ldots,v_{s-1,2},v_2,v_3\}$ contradicting the choice of $G$.
Otherwise, one component of $G$ is $K_{1,3}$ (with the center $u$) and all other are $K_3$'s;
thus part~(\ref{PQ:star}) of Theorem~\ref{PQ} holds.

\end{proof}

Now let $D$ be the auxiliary digraph with respect to $(P,Q,S)$ with target class $Q$.
If $S\in \cA$, then moving witnesses along a shortest $S,Q$-path would increase $|Q|$, contradicting (\ref{PQmin:q'}). 
Thus $S\in \mathcal{B} $.
Subject to (\ref{PQmin:n})--(\ref{PQmin:highdeg}), choose the partition so that:
\begin{enumerate}[label=\textup{(M\arabic*)}, ref=\textup{M\arabic*}, start=4]
\item \label{PQmin:B} $|\mathcal B|$ is minimum.
\end{enumerate}

There are two cases for $|\mathcal B|$.

\medskip
\noindent{\bf Case 1. $|\mathcal{B} |=1$.}
Then $\mathcal{B}=\{S\}$ and $ \mathcal{A}=\{P,Q\}$.
Thus every vertex of $S$ has a neighbor in both $P$ and $Q$. 
By Claim~\ref{PQx2}, each $u\in S$ has exactly one neighbor in $P$ and exactly one neighbor in $Q$. 
Denote them by $p_u\in P$ and $q_u\in Q$.

Since $u$ has no neighbor in $P-\{p_u\}$ and $d_G(u)\le 2$, the choice in~(\ref{PQmin:highdeg}) gives $d_G(p_u)\le 2$; 
otherwise replacing $p_u$ by $u$ in $P$ would decrease the number of vertices of degree at least $3$ in $P$. 
Also, $p_u$ must have a neighbor in $Q$, for if
$N_Q(p_u)=\emptyset$, then $P-p_u+u$ is an independent set of size $p$ while $Q\cup\{p_u\}$ is an independent set larger than $Q$, contradicting~(\ref{PQmin:q'}).
Hence $p_u$ has a unique neighbor in $Q$, say $q'_u$, and $d_G(p_u)=2$.

Thus each $u\in S$ gives three edges $ up_u, uq_u,p_uq'_u$, and overall there are $3s'$ edges counted in this way.
We claim that these $3s'$ edges are distinct.
Indeed, the edges $up_u$ and $uq_u$ are distinguished by their endpoint $u$. 
If $p_uq'_u=p_vq'_v$ for distinct $u,v\in S$, then the common vertex in $P$ is adjacent to $u$, to $v$, and to a vertex of $Q$, contradicting $d_G(p_u)=2$.
Similarly, $p_u\ne p_v$ whenever $u\ne v$.
Therefore, $e(G)\ge 3s'\ge 3s$.
Since $e(G)\le 3s$, equality holds. In particular, $s'=s$, and every edge of $G$ must be one of the forms $up_u, uq_u,p_uq'_u$.

We now identify the structure. For each $u\in S$, if $q_u=q_u'$, then $G[\{u,p_u,q_u\}]\cong K_3$.
Suppose that there is some $u\in S$ with $q_u\neq q_u'$, then the component containing $u$ contains the path $q_uup_uq_u'$.
If there is a component $C$ of $G$ that is a path, and say it contains $t$ vertices in $S$, then it must contain $t$ vertices in $P$ and $t+1$ vertices in $Q$.
In particular for every $u\in C\cap S$, we must have $q_u\neq q_u'$, so $s<q\le p$, then there is some isolated vertex $w\in P$.
Now $C$ contains $3t+1$ vertices, and we may pick two sets $P_C$ and $Q_C$ such that each is an independent set of size $t+1$.
We may replace $P\cap C$ by $P_C$ and $Q\cap C$ by $Q_C$, then move $w$ from $P$ to $Q$.
Now $P$ and $Q$ are two disjoint independent sets of sizes at least $p$ and $q$ each, satisfying~(\ref{PQ:induced}), a contradiction.

Suppose that there is some cycle component $C$ of length at least four.
If it contains $t$ vertices in $S$, then it must contain $t$ vertices in each of $P$ and $Q$.
Similarly, we may find independent sets $P_C$ of size at least $t$ and $Q_C$ of size at least $t+1$.
We may replace $P\cap C$ by $P_C$ and $Q\cap C$ by $Q_C$, and $P$ and $Q$ are two disjoint independent sets of sizes at least $p$ and $q$ each, satisfying~(\ref{PQ:induced}), a contradiction.

Then each component in $G$ must be a copy of $K_3$ induced by $\{u,p_u,q_u\}$ for some $u\in S$, a contradiction to the choice of $G$.

\noindent{\bf Case 2. $|\mathcal{B} |=2$.}
Then $\mathcal{A}=\{Q\}$ and $\mathcal{B} =\{P,S\}$.
Hence every vertex of $S\cup P$ has a neighbor in $Q$, so
\begin{equation}\label{PQ:sp_q}
    e[S\cup P,Q]\ge s'+p.
\end{equation}
               
By Claim~\ref{PQx2}, each vertex of $Q$ has degree at most $2$, so $s'+p\le e[S\cup P,Q]\le 2q'$.              
Since $p\ge q\ge q'+1$, it follows that $s\le s'\le q'\le q-1\le p-1$.
If $SP\notin E(D)$, then every vertex of $S$ has a neighbor in $P$, and so $e[S,P]\ge s'$.
Together with~\eqref{PQ:sp_q}, this gives $e(G)\ge s'+p+s'\ge 3s+1$, a contradiction. 
Hence we have that $SP\in E(D)$, and say some vertex $z\in S$ has no neighbor in $P$.
If there is some vertex $w\in P$ with $d_G(w)\ge3$, then $P-w+z$ is independent and contains less vertices of degree at least $3$, contradicting (\ref{PQmin:highdeg}).
With Claim~\ref{PQx2}, we have that $\Delta(G)\le 2$.

Subject to (\ref{PQmin:n})--(\ref{PQmin:highdeg}), choose $(P,Q)$ so that $\sum_{v\in Q}d_G(v)$ is minimized.
We claim the following:
\begin{equation}\label{PQ:dxy}
\parbox{0.9\linewidth}{ If $u\in S\cup P$ has a solo neighbor $v\in Q$, then $d(u)\geq d(v)$.
}
\end{equation}

Indeed, if $d_G(u)<d_G(v)$, then we move $u$ to $Q$ and $v$ to $S$. If $u\in P$,
then we also move a witness of $SP\in E(D)$ to $P$. The new partition satisfies all
previous choices, but decreases $\sum_{v\in Q}d_G(v)$, a contradiction.

Now we assign each vertex of $S\cup P$ an initial charge of $1$, and all vertices of $Q$ and all edges charge $0$. 
Thus the total initial charge is $s'+p$.

Distribute the charges so that each edge inside $G[S\cup P]$ receives charge $1/2$ from each endpoint, and then every vertex of $S\cup P$ distributes its remaining charge equally among its neighbors in $Q$.
By~\eqref{PQ:dxy} and the fact $\Delta(G)\le2$, every vertex of $Q$ receives final charge at most $1$.
Therefore $s'+p\le e(G[S\cup P])+q'$.

With~\eqref{PQ:sp_q}, we have that
$$
e(G)\ge e(G[S\cup P])+e[S\cup P,Q]\ge s'+p-q'+(s'+p)\ge (s+1)+(s'+p)\ge 3s+1,
$$
a contradiction.
\end{proof}

\section{Setup for $3$ cliques}\label{setup}
In  this section { we set up a proof of Theorem~\ref{PQR} below, and in the following three sections we prove it.}

\begin{thm} 
\label{PQR} Let $p,q\geq  r\geq 2$, $n\geq p+r+q$ and $s=n-p-q-r+1$. If an $n$-vertex graph $G$ contains  vertex disjoint  induced $\ol{K}_p$ and $\ol{K}_q$ and $e(G)\leq 5s$,
then one of the following holds:
 \begin{enumerate}[label=\textup{(\alph*)}, ref=\textup{\alph*}]
\item \label{PQR:induced} $G$ contains  vertex disjoint  induced $\ol{K}_p$, $\ol{K}_q$ and $\ol{K}_r$,
\item \label{PQR:cliques} $e(G)=5s$ and $G$ contains $x$ disjoint copies of $K_5$ and $y$ disjoint copies of $K_6$, where $10x+15y=5s$ and $5x+6y\leq n$, 
\item \label{PQR:star} $e(G)=5s$, $p=q=r$, one component of $G$ is the star $K_{1,5}$ and every other component is a $K_5$ or a $K_6$.
\end{enumerate}
\end{thm}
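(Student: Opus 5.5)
I would prove Theorem~\ref{PQR} by mimicking the proof of Theorem~\ref{PQ}, one level up. Take a minimal counterexample $G$ with disjoint independent sets $P$, $Q$ of sizes $p$, $q$. Let $R$ be a maximum independent set in $G-P-Q$, with $r'=|R|\le r-1$, and put $S=V(G)-P-Q-R$, so $s'=|S|\ge s$. I choose the triple $(G,P,Q,R)$ lexicographically:
\begin{itemize}[label=]
\item (i) $n$ is minimum;
\item (ii) $r'$ is maximum;
\item (iii) the number of high-degree vertices (degree at least $5$) in $P\cup Q$ is minimum;
\item (iv) the number $|\cB|$ of non-accessible classes in the auxiliary digraph $D$ of $(P,Q,R,S)$ with target class $R$ is minimum.
\end{itemize}
The small case $s'=1$, i.e. $n=p+q+r$, is handled first by a direct switching argument, analogous to \eqref{PQxy}. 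It should produce $K_{1,5}$ exactly when $p=q=r$.

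\textbf{Step 1: a degree claim.} The first main step is the analogue of Claim~\ref{PQx2}: every $u\in R\cup S$ has $d_G(u)\le 4$. Suppose $d(u)\ge5$. Apply minimality to $G-u$, which still contains $P$ and $Q$ (after possibly re-choosing them). This gives $e(G-u)\ge 5(s-1)$, so $d(u)=5$, and $G-u$ has the structure (b) or (c). I then run cases on whether $u$ lies in a $K_6$, in a $K_5$ or $K_4$-type subgraph, or in no large clique.
\begin{itemize}[label=]
\item If $u$ lies in a $K_6$ or a $K_5$ with the rest, the component structure must be inherited, otherwise a higher-degree choice of $u$ exists.
\item If $u$ sees vertices of several $K_5$/$K_6$ components or isolated vertices, I exhibit three disjoint independent sets of sizes $p,q,r$ explicitly. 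This uses the count $n-1=5x+6y+k_1$ together with $10x+15y=5(s-1)$, mirroring \eqref{4}.
\item The $K_{1,5}$ outcome arises when $u$ is adjacent to all five isolated vertices and $p=q=r=s+1$.
\end{itemize}
This case analysis is lengthy, since $P$ and $Q$ together may take two vertices from each clique. It is the place where the bound $5s$ (each $K_5$ accounts for two ``missing'' vertices) gets matched.

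\textbf{Step 2: the digraph and discharging.} $S$ is not accessible, since otherwise shifting witnesses would enlarge $R$. I would then split on $|\cB|\in\{1,2,3\}$.
\begin{itemize}[label=]
\item If $\cB=\{S\}$, each $u\in S$ has neighbours in $P$, $Q$ and $R$. Condition (iii) forces the corresponding $P$- and $Q$-neighbours to have small degree and further neighbours. This yields at least $5$ distinct edges per vertex of $S$, and equality forces $K_5$ or $K_6$ components (or paths/cycles, which are excluded by re-selecting independent sets as in Case~1 of Theorem~\ref{PQ}).
\item If $|\cB|\ge2$, I use the edges from $B$ to $A$ and a discharging argument. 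Each vertex of $B$ starts with charge $1$, pays $1/2$ per edge inside $B$, and splits the remainder among its neighbours in $A$. A solo-edge exchange rule, as in \eqref{PQ:dxy}, then shows that each vertex of $A$ receives at most a bounded amount.
\item When $\cB\supseteq\{P,S\}$ or $\{Q,S\}$, I would invoke Theorem~\ref{PQ} on the subgraph induced by $B\cup R$ to get lower bounds of the form $3(\cdot)$ on edges inside, and add the $e[B,A]$ edges.
\end{itemize}

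\textbf{Main obstacle.} I expect the hardest part to be the case $|\cB|=2$ with $A=R\cup P$ (or $R\cup Q$). Vertices in $A$ may have degree up to $4$, and the discharging must still produce more than $5s$ edges. I would first try to use $p,q\ge r>r'$ to get $|B|\ge s'+q$ and $e[B,A]\ge|B|$, then apply Theorem~\ref{PQ} inside $G[B\cup R]$. If that fails, I would refine the degree-sum choice rule (minimising $\sum_{v\in A}d(v)$) to bound the charge each accessible vertex receives.
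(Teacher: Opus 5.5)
Your skeleton (minimal counterexample, the choices (i)--(iv), the separate case $s'=1$, the bound $d(u)\le 4$ on $R\cup S$, and the split on $|\cB|$) is exactly the paper's. However, Step~2 lacks the ingredient that actually closes the cases, the substitutes you propose are too weak, and you leave $|\cB|=2$ open. The missing ingredient is a set of solo-neighbour claims, obtained from the maximality of $r'$ and the minimality of $|\cB|$ by shifting witnesses along accessible paths. Take $\cA=\{X,R\}$ and $\cB=\{S,Y\}$, and let $v\in X$ be the solo neighbour of some $u\in B$ lying in a class reachable from $S$. Then $v$ has a neighbour in $R$: otherwise move $v$ to $R$ and $u$ to $X$ (refilling $Y$ from $S$ if $u\in Y$), which enlarges $R$. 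Also, $v$ is not the solo neighbour of two non-adjacent such vertices: otherwise a switch makes a class of $\cB$ accessible. These claims force edges between $X$ and $R$, which no bound on $e[B,A]$ sees, and they are indispensable. Theorem~\ref{PQ} applied to $G[S\cup Y]$ with second set of size $r'+1$ (legitimate by maximality of $r'$) gives $e(G[S\cup Y])\ge 3(s'-r')$. Since every vertex of $B$ has a neighbour in \emph{each} accessible class, $e[B,A]\ge 2(s'+y)$. But $3(s'-r')+2(s'+y)=5s'+2y-3r'$ does not exceed $5s'$ when $y=r'+1\ge 3$. The claims upgrade $e(Y,X)\ge y$ to $e(Y,X)+e(X,R)\ge 2y$, giving $e(G)\ge 5s'+3y-3r'>5s$ when $Y$ is reachable from $S$. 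When it is not, one uses instead $e(S,Y)\ge s'$, $e(S,X)+e(X,R)\ge 3s'/2$ and $s'\le 2r-2$. Your route ($e[B,A]\ge|B|$ plus Theorem~\ref{PQ} on $G[B\cup R]$) undercounts $e[B,A]$ and double counts $e[B,R]$, so it is weaker still. Analogous claims are also what justify ``five edges per vertex of $S$'' when $|\cB|=1$: a solo neighbour in $P$ or $Q$ of a vertex of $S$ has neighbours in both other classes of $\cA$, and is shared only by adjacent vertices of $S$. There the equality case gives only $K_5$-components (two adjacent vertices of $S$, their common solo neighbours in $P$ and $Q$, and a single common neighbour in $R$), not paths or cycles.

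For $|\cB|=3$, the exchange rule (if $x\in B$ has a solo neighbour $y\in R$ then $d(x)\ge d(y)$) is available for $x\in P\cup Q$ only if the depleted class can be refilled along a witness path from $S$. So you need a further choice (minimise the number of classes of $\cB$ not reachable from $S$) and separate arguments when $Q$, or both $P$ and $Q$, are unreachable. The paper uses Theorem~\ref{PQ} on $G[S\cup P]$ in the first case. In the second it uses a new discharging, based on the fact that no vertex of $P\cup Q$ is the unique neighbour of two non-adjacent vertices of $S$. Even in the reachable case, your weight $1/2$ per internal edge is insufficient, because a degree-$4$ vertex of $R$ can still receive $2$. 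You then only get $e(G[B])\ge |B|-2r'$ and hence $e(G)\ge 2s'+2r'+4$. This is below $5s'$ for $s'=2r'-2$ and $r'\ge 3$, and $s'\le 2r'-2$ is all the degree bound gives. The paper charges $1/6$ per internal edge and obtains $e(G[B])\ge 3(|B|-2r')$. Finally, there is a slip in Step~1. In the $K_{1,5}$ outcome, if $k$ is the number of $K_5$-components, then $p=q=r=k+2$ and $s=2k+1$, i.e.\ $s=2p-3$, not $p=s+1$.
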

Note that  $p$ and $q$ are symmetric. 
Suppose there is a counter-example $G$ for the theorem and disjoint sets $P,Q\subset V(G)$
are such that $G[P]=\ol{K}_p$ and $G[Q]=\ol{K}_q$. Then~(\ref{PQR:induced}) fails and $e(G)\leq 5s$. If $e(G)=5s$, then neither~(\ref{PQR:cliques}) nor~(\ref{PQR:star}) holds.

Let $R$ be a maximum independent set in $G-P-Q$ and  $r'=|R|$. 
Let $S=V(G)-P-Q-R$ and $s'=|S|$. Then $1\leq r'\leq r-1$ and $s'\geq s$.

Suppose first that $s'=1$. Since $n\geq p+q+r$, this means $r'=r-1, s'=s$ and $n=p+q+r$. Choose $P$ and $Q$ so that
\begin{enumerate}[label=\textup{(Q\arabic*)}, ref=\textup{Q\arabic*}]
\item \label{PQR1} {\it $G-P-Q$ has the fewest edges,}
\item \label{PQR2} {\it  modulo~$($\ref{PQR1}$)$, $\sum_{v\in S\cup R}d(v)$ is minimum.}
\end{enumerate}
Note that $S\cup R=V(G)-P-Q$.
If $G-P-Q$ has no edges, we are done, so assume $xy\in E(G)$ for some $x,y\in S\cup R$. 
We will use the following properties of our partition.
\begin{equation}\label{01}
\parbox{0.9\linewidth}{\it (A) If $x$ (or $y$) has no neighbors  in some $J\in \{P,Q\}$, then each $w\in J$ has a neighbor in $R\cup S$; and\\
(B) If $x$ (or $y$) has exactly one neighbor $w$  in some $J\in \{P,Q\}$,
 then $w$ has another neighbor in $R\cup S$.}
\end{equation}
Indeed, if (A) or (B) does not hold, then we switch $x$ (or $y$) with $w$, contradicting (\ref{PQR1}).

Now~\eqref{01} yields
\begin{equation}\label{02}
\parbox{0.9\linewidth}{\it For each $J\in \{P,Q\}$, $e[J,R\cup S]\geq 2$ with equality only if $|J|=2$ or some $w\in J$ is adjacent to both $x$ and $y$.}
\end{equation}

Since
$e(G)\leq 5$, \eqref{02} implies that $e(G)= 5$ and $e[P,Q]=0$. If 
each $J\in \{P,Q\}$ has a vertex $w_J$ adjacent to two vertices in $R\cup S$, then we move $w_P$ to $Q$ and an isolated $v\in Q$ to $P$, obtaining a new partition in which $R\cup S$ has no neighbors in $P-w_P+v$, contradicting~\eqref{01}(A). Thus by symmetry we may assume that each $w\in P$ has at most one neighbor in $R\cup S$. Then by~\eqref{02},
$|P|=2$, say $P=\{w,w'\}$, and hence $|R|=1$, namely $R\cup S=\{x,y\}$.  Also, if $w$ and $w'$ are adjacent to different neighbors in $R\cup S$, that would contradict~\eqref{01}(B).
So, we may assume that $xw,xw'\in E(G)$.

Now, if $Q$ contains a vertex $z$ adjacent to $x$ and $y$, then we can switch $z$ with $x$ and decrease the sum of degrees of vertices in { $S\cup R$}, contradicting{~\eqref{PQR2}}. Therefore, 
by \eqref{02}, $|Q|=2$, say $Q=\{z,z'\}$, and $z$ and $z'$ have the same neighbor in $R\cup S$. If this neighbor is $y$, then the sets $P'=\{w,y\}$, $Q'=\{z,x\}$ and $R'=\{w',z'\}$ are independent, a contradiction. Otherwise, $G=K_{1,5}$, as claimed.
 So $s'\geq 2$.

We choose  a quadruple $(G,P,Q,R)$ as above so that
\begin{enumerate}[label=\textup{(N\arabic*)}, ref=\textup{N\arabic*}]
\item \label{PQRmin:n} {\it $n:=|V(G)|$ is minimum;}
\item \label{PQRmin:r'} {\it  modulo (\ref{PQRmin:n}), $r'$ is maximum;}
\item \label{PQRmin:highdeg}  {\it modulo (\ref{PQRmin:n}) and (\ref{PQRmin:r'}), the number of vertices in $P\cup Q$ of degree at least $5$ is minimum.}
\end{enumerate}

{
\begin{Claim}
\label{d4}
      For every $x\in S\cup R$, $d(x)\leq 4$.
\end{Claim}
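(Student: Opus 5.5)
We mirror the proof of Claim~\ref{PQx2}. Suppose the claim fails. Among all admissible choices of $(P,Q,R)$ and vertices $u\in S\cup R$ with $d(u)\ge 5$, pick one with $d(u)$ maximum, and subject to this, with the number of $K_5$-subgraphs (then $K_4$-subgraphs) containing $u$ maximum.

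\textbf{Reduction.} Set $G'=G-u$. It still contains the disjoint induced $\ol{K}_p$ and $\ol{K}_q$ given by $P$ and $Q$, and it has $s-1$ in place of $s$. If $s-1=0$ we would be in the already settled situation $s'=1$; so we may apply minimality~(\ref{PQRmin:n}) to $G'$. If $G'$ had disjoint induced $\ol{K}_p,\ol{K}_q,\ol{K}_r$, so would $G$. Hence $e(G')\ge 5(s-1)$. Since $d(u)\ge 5$, this forces
\[
d(u)=5,\qquad e(G)=5s,\qquad e(G')=5(s-1),
\]
so (\ref{PQR:cliques}) or (\ref{PQR:star}) holds for $G'$. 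Its components are therefore $K_5$'s, $K_6$'s and $K_1$'s, plus possibly one $K_{1,5}$ when $p=q=r$.

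\textbf{Case analysis on where the five neighbours of $u$ lie.}

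Case 1: $u$ lies in a $K_6$-subgraph $F$. If $F$ were not a component, some vertex of $F$ would have degree larger than $5$. Either it lies in $S\cup R$, contradicting maximality of $d(u)$, or it lies in $P\cup Q$ and we swap it with $u$ to get the same contradiction. So $F$ is a component, $F-u$ is a $K_5$-component of $G'$, and $G$ inherits (\ref{PQR:cliques}) or (\ref{PQR:star}), a contradiction. We conclude that $G$ contains no $K_6$.

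Case 2: $u$ lies in a $K_5$-subgraph $F$. Then $F-u$ is a $K_4$ inside a component of $G'$, which must be a $K_5$-component $Y$, since $K_6$ is excluded and $K_{1,5}$ has no $K_4$. As $d(u)=5$, we have $|N(u)\cap Y|\in\{4,5\}$, and $5$ is impossible without a $K_6$. So $u$ has four neighbours in $Y$ and one elsewhere. Each of these four neighbours lies in two $K_5$'s, while $P\cup Q$ contains at most two vertices of $Y$. Hence some neighbour in $S\cup R$ beats $u$ in the secondary tie-break, a contradiction.

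Case 2': $u$ lies in a smaller clique but no $K_5$. This is handled the same way, using the tie-break on $K_4$-subgraphs, $K_3$-subgraphs, and so on.

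Case 3: $u$ has at most one neighbour, or generally few neighbours, in each $K_5$-component. If $G'$ has a $K_{1,5}$-component with center $v$, then $v$ lies in no independent $p$-set, so $v\in S\cup R$. If $uv\in E(G)$ then $d(v)=6$, contradicting maximality. Otherwise we build three disjoint independent sets of size $p=q=r$ by taking distinct non-neighbours of $u$ from each $K_5$-component together with leaves of the star and with $u$, $v$.

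In the remaining subcase all components of $G'$ are $K_5$'s and $K_1$'s. Counting gives $k_5=s-1$ and $n-1=k_1+5(s-1)$, that is, $p+q+r-1=k_1+4(s-1)$. From these we construct $P',Q',R'$ greedily. Put $u$ into $R'$ together with a non-neighbour of $u$ in each $K_5$, which exists because $u$ has at most $5$ neighbours in total. Fill $P',Q'$ from the remaining vertices of the $K_5$'s (one per set per component) and from the isolated vertices. This works unless $k_1$ is small and $u$ is adjacent to all of the isolated vertices, which should force the $K_{1,5}$ structure of (\ref{PQR:star}).

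\textbf{Main obstacle.} The hardest part is Case 3 and the intermediate cases where $u$ has $2$ or $3$ neighbours in a single $K_5$-component. Placing one vertex of each $K_5$ into each of $P',Q',R'$ must avoid $N(u)$, and the counting with $k_1$ has to work for unequal $p,q,r$. I would first try a Hall-type argument: each $K_5$-component has at least two non-neighbours of $u$ whenever $u$ has at most $3$ neighbours there. Then reduce to checking the extreme case, which should be $k_1=5$ with $p=q=r=s+1$.
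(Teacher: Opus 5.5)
You have the paper's skeleton (reduction, Case~1, Case~2, and the $K_{1,5}$-component subcase), but the remaining subcase, where every component of $G'$ is a $K_5$ or a $K_1$, is where the work lies, and it is not proved. (In the $K_{1,5}$ subcase you also still need to compute $p=q=r=k_5+2$ from $10k_5+5=5(s-1)$.)

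First, your count is wrong. A $K_5$ has $10$ edges, so $e(G')=5(s-1)$ gives $k_5=(s-1)/2$, not $s-1$; you carried over the $K_3$ count from Claim~\ref{PQx2}. Let $k_1$ be the number of isolated vertices of $G'$. The correct relation is $p+q+r-1=k_1+3k_5$, and the extreme configuration is $k_1=5$, $p=q=r=k_5+2=(s+3)/2$, not $p=q=r=s+1$.

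Second, ``this works unless $k_1$ is small and $u$ is adjacent to all isolated vertices'' is asserted, not proved. Proving it needs a lower bound on $r$ that you never derive:
\begin{itemize}
\item A vertex of a $K_5$-component $Y$ of $G'$ has all its $G$-neighbours in $Y\cup\{u\}$, and $u\notin P\cup Q$. So you may rearrange $P\cap Y$ and $Q\cap Y$ inside $Y$.
\item Since $u$ has at most three neighbours in $Y$, you can arrange that some non-neighbour of $u$ in $Y$ lies outside $P\cup Q$. Hence $G-P-Q$ contains an independent set formed by $u$ and one vertex of each $K_5$-component.
\item As $G$ is a counterexample, this gives $r\ge k_5+2$, and hence $p,q\ge k_5+2$.
\end{itemize}
With this, your greedy uses exactly $(p-k_5)+(q-k_5)+(r-1-k_5)=k_1$ isolated vertices. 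Let $a\le 5$ be the number of isolated neighbours of $u$. The greedy succeeds if and only if $u$ has at least $r-1-k_5$ isolated non-neighbours, i.e.\ if and only if $a\le (p-k_5)+(q-k_5)$. The right-hand side is at least $4$, so failure forces $a=5$ and $p=q=r=k_5+2$. Then $k_1=5$ and $G=k_5K_5\cup K_{1,5}$, which is (\ref{PQR:star}).

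Your Case~$2'$ is not an argument as stated, and it is unnecessary. Once $u$ lies in no $K_5$, it has at most three neighbours in each $K_5$-component, which is all the per-component choice above needs.

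The paper goes more directly. A neighbour $w$ of $u$ in a $K_5$-component can be moved out of $P\cup Q$ by the same rearrangement. Then $w$ is a degree-$5$ vertex outside $P\cup Q$ lying in a $K_5$, so the extremal choice of $u$ would have landed in Case~2. Hence all five neighbours of $u$ are isolated in $G'$, and $u$ is the centre of a $K_{1,5}$-component of $G$. The paper then combines $r\ge k_5+2$ with the impossibility of splitting the non-$K_5$ vertices into independent sets of sizes $p-k_5,q-k_5,r-k_5$. This forces $p=q=r=k_5+2$ and no further isolated vertices.

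A minor slip: $s=1$ does not mean $s'=1$. When $s=1$ the bound $e(G')\ge 5(s-1)=0$ is simply trivial, and equality means $G'$ is edgeless.
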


\begin{proof} Suppose the claim does not hold. Among different choices of $P,Q$ and
 $x\in S\cup R$ with $d(x)\geq 5$, pick such that $d(x)$ is  maximum, and modulo this, the number of $K_5$-subgraphs containing $x$  is maximum.

Consider $G'=G-x$. It  still contains $P$ and $Q$. If $G'$ contains  vertex disjoint  induced $\ol{K}_p,\ol{K}_q$ and $\ol{K}_r$, then so does $G$, a contradiction. Otherwise, by the minimality of $n$, $e(G')\geq 5(s-1)$, and in the case of equality
  every component of $G'$   is a $K_5$, a $K_6$ or a $K_1$ (or a $K_{1,5}$ when $p=q=r$). Since $d(x)\geq 5$, the latter holds and $d(x)=5$.

{\bf Case 1:}
 $x$ is in a $K_6$-subgraph $F$ of $G$. Then $F$ is a component of $G$, since otherwise we can choose a vertex of a higher degree in $R\cup S$, possibly moving $x$ into $P\cup Q$. In this case, $F-x$ is a $K_5$-component in $G'$. Thus, if $G'$ satisfies statement (b) or (c) of our theorem, then so does $G$, a contradiction. 

 Since Case 1 is impossible and any $P\cup Q$ contains at most two vertices from any $K_6$-subgraph of $G$, we conclude that
 \begin{equation}\label{NoK6}
    \mbox{$G$ does not contain $K_6$.} 
 \end{equation}

{\bf Case 2:}
 $x$ is in a $K_5$-subgraph $F$ of $G$. Then $F-x$ cannot be in a $K_6$-component of $G'$ by~\eqref{NoK6}, and in a $K_{1,5}$-component by the structure. So, it must be in a $K_5$-component, which implies that there is $y\in V(G)-V(F)$ adjacent to all vertices in $F-x$. Since each vertex in $F-x$ is in two $K_5$-subgraphs of $G$ and
 only two  of them can be in $P\cup Q$, this contradicts the choice of $x$.

{\bf Case 3:} For any choice of $P,Q$ and $x\in V(G)\setminus (P\cup Q)$ with $d(x)=5$,
 $x$ is in no $K_5$-subgraph  of $G$. Let $x$ and $G'$ be chosen as above.
 If $G'$ has a $K_{1,5}$-component $F$, say with center $c$ and leaves $y_1,\ldots,y_5$, then $p=q=r$ and by the minimality of $n$ and~\eqref{NoK6}, all other components are $K_5$'s. In particular, if $G'$ has $k$
 $K_5$-components, then $p=q=r=k+2$. 
 It follows that we can include any two vertices from each copy of $K_5$ into $P\cup Q$, thus by Case 2, $x$ has no neighbors in $K_5$'s, and $c$  is in neither $P$ nor $Q$. Therefore $xc\notin E(G)$. But then we can construct independent sets $P,Q$ and $R$ by including into $P$ $\{y_1,y_2\}$, into $Q$ $\{y_3,y_4\}$, into $R$ $\{c,x\}$, and then add to each of these sets  one vertex from each of the $k$ $K_5$-components. This contradicts the choice of $G$.

 The remaining possibility is that each component of $G'$ is a $K_5$- or a $K_1$-component and that $x$ is not adjacent to $K_5$-components. It follows that one component of $G$ is a $K_{1,5}$, some components are $K_1$'s (say, $k_1$ of them), and some components are $K_5$'s (say, $k_5$ of them). 
 Let $G''$ be obtained from $G$ by deleting all vertices of $K_5$-components, and let $n''=|V(G'')|$.
 Then $n=5k_5+k_1+6$, $n''=k_1+6$ and $e(G)=10k_5+5$, hence $s=2k_5+1$
  and $p+q+r=3k_5+k_1+6$. Since $G'$ contains disjoint independent sets $P$ and $Q$ each of which has at most one vertex in each $K_5$-component, we can find a disjoint from $P\cup Q$ independent set containing $x$,  and one vertex from each $K_5$-component.
  Since $G$ is a counter-example to the theorem, this yields 
  \begin{equation}\label{r-k}
     r\geq 2+k_5.
 \end{equation}
   It follows that $V(G'')$
 cannot be partitioned into three independent sets of sizes $p-k_5$, $q-k_5$ and $r-k_5$, which by the structure of $G''$ means that
 \begin{equation}\label{k1}
     1+k_1<r-k_5\leq \lfloor (n''/3)\rfloor=\lfloor (k_1/3)\rfloor+2.
 \end{equation}
 By~\eqref{k1}, $k_1=0$, $r-k_5\leq 2$ and $n''=6$. Together with~\eqref{r-k} and $p,q\geq r$, we get
 $r-k_5=q-k_5=p-k_5=2$, thus~(\ref{PQR:star}) of Theorem~\ref{PQR} holds.
\end{proof}}

Similarly, we obtain the following properties:
\begin{equation}\label{11'}
\parbox{0.9\linewidth}{ (i) If $x\in R\cup S$ has no neighbor in some $J\in\{P,Q\}$, then $d(v)\leq 4$ for every $v\in J$;\\
(ii) If $x\in S\cup R$ has a unique neighbor $x'$ in some $J\in \{P,Q\}$, then 
$d(x')\leq 4$.}
\end{equation}

With the partition $(P,Q,R,S)$ of $V(G)$, let $D$ be the corresponding auxiliary digraph, and fix $R$ to be the target class, so $R$ is accessible by default. 
Let $\cA$ denote the set of accessible vertices in $D$ and let $\cB=V(D)\setminus \cA$. 
If $S$ is accessible, then we  move the witnesses along a shortest $S,R$-path in $D$ and get more vertices in $R$, contradicting (\ref{PQRmin:r'}). Thus, $S\in\cB$. 
Among the quadruples $(G,P,Q,R)$ satisfying  
(\ref{PQRmin:n})-(\ref{PQRmin:highdeg}), choose one such that
\begin{enumerate}[label=\textup{(N\arabic*)}, ref=\textup{N\arabic*}, start=4]
\item \label{PQRmin:B}  {\it $|\mathcal B|$ is minimum.}
\end{enumerate}

By above, $|\cB|\in \{1,2,3\}$. In each of the next 3 sections, we will consider one of these possibilities.

\section{Case of $|\cB|=1$}\label{B1}

In this section, $\cB=\{S\}$, $\cA=\{P,Q,R\}$ and each $x\in S$ has at least one neighbor in each $V'\in\cA$. By Claim~\ref{d4}, $\Delta(G[S])\le 1$. Thus $s\le s'\le 2r-2$. By Tur\'{a}n's Theorem, 
\begin{equation}\label{3.1}
e(G[S])\ge\ol{t}_{s',r-1}=\left\{\begin{array}{lll} 0, & \mbox{if $0\le s'\le r-1$,}\\ s'-r+1, & \mbox{if $r\le s'\le 2r-2.$}\end{array}\right.
\end{equation}

\begin{Claim}\label{claim3.1} If $v\in P$ (resp. $v\in Q$) is a solo neighbor of a vertex $u\in S$ and $QR\in E(D)$ (resp. $PR\in E(D)$), then $N_{Q}(v)\not=\emptyset$ (resp. $N_{P}(v)\not=\emptyset$) and $N_{R}(v)\not=\emptyset$.
\end{Claim}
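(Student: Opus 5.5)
By the symmetry between $P$ and $Q$, it is enough to treat $v\in P$, a solo neighbor of some $u\in S$, under the hypothesis $QR\in E(D)$. We argue by contradiction. In each case we perform a local exchange that produces a new partition which contradicts the maximality of $r'$ in (\ref{PQRmin:r'}). The two exchanges use three facts.
\begin{itemize}
\item Since $v$ is the only neighbor of $u$ in $P$, the set $P-v+u$ is independent of size $p$.
\item $QR\in E(D)$ gives a witness $w\in Q$ with $N_R(w)=\emptyset$.
\item We are in the case $\cB=\{S\}$, so $P$, $Q$ and $R$ are all accessible.
\end{itemize}

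First, suppose $N_R(v)=\emptyset$. Replace $P$ by $P'=P-v+u$ and $R$ by $R'=R+v$, keeping $Q$ unchanged. Then $P'$ is independent of size $p$ and $R'$ is independent of size $r'+1$. The sets $P'$, $Q$, $R'$ are disjoint and lie in the same $n$-vertex graph $G$. This contradicts (\ref{PQRmin:r'}), so $N_R(v)\neq\emptyset$. This part does not need the hypothesis $QR\in E(D)$.

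Second, suppose $N_Q(v)=\emptyset$. Here we use the witness $w$. Set
\[ P'=P-v+u,\qquad Q'=Q-w+v,\qquad R'=R+w. \]
Each set is independent and they are pairwise disjoint:
\begin{itemize}
\item $P'$ is independent by the first fact above.
\item $Q'$ is independent because $v$ has no neighbor in $Q$.
\item $R'$ is independent because $w$ has no neighbor in $R$.
\end{itemize}
The sizes are $p$, $q$ and $r'+1$, which again contradicts (\ref{PQRmin:r'}). Hence $N_Q(v)\ne\emptyset$.

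There is one subtlety in this exchange. We need $w\ne v$, which holds because $w\in Q$ and $v\in P$. We also need the edge $vw$ not to matter: $v$ and $w$ never end up in the same new class, so it does not. The case $v\in Q$ with $PR\in E(D)$ is identical with the roles of $P$ and $Q$ interchanged.

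The main point requiring care is checking that each swap really keeps all three classes independent, and that the contradiction comes directly from (\ref{PQRmin:r'}), not from the tie-breaking rules (\ref{PQRmin:highdeg}) or (\ref{PQRmin:B}). No accessibility-path argument is needed, since a single witness edge $QR$ already suffices.
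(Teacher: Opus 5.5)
Your proof is correct and is the same argument as the paper's. In both cases you move $u$ into $P$, then either move $v$ into $R$, or move $v$ into $Q$ while pushing the $QR$-witness into $R$; each exchange contradicts the maximality of $r'$ in (\ref{PQRmin:r'}), and your remarks on accessibility and on the edge $vw$ are harmless but not needed.
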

\begin{proof} By symmetry, we may assume that $v\in P$ and $QR\in E(D)$. If $N_{Q}(v)=\emptyset$, then we move $u$ to $P$, move $v$ to $Q$ and move the witness along $QR$.  Thus we get more vertices in $R$, contradicting~\eqref{PQRmin:r'}.

If $N_{R}(v)=\emptyset$, then we move $u$ to $P$, move $v$ to $R$. Again, we obtain more vertices in $R$, contradicting~\eqref{PQRmin:r'}.\end{proof}

\begin{Claim}\label{claim3.2} No vertex $v\in V'\in\mathcal{A}$ is the common solo neighbor of two non-adjacent vertices in $S$.
\end{Claim}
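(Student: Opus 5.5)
We argue by contradiction. Suppose $v\in V'\in\cA$ is the solo neighbor of two non-adjacent vertices $u_1,u_2\in S$. The plan is to build a new partition that moves both $u_1$ and $u_2$ into $V'$ and pushes $v$ out, so that $|R|$ grows, contradicting~\eqref{PQRmin:r'}.

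First, since $v$ is the unique neighbor of each $u_i$ in $V'$ and $u_1u_2\notin E(G)$, the set $V'-v+u_1+u_2$ is independent and has $|V'|+1$ elements. The remaining work is to dispose of the spare vertex in a way that enlarges $R$.

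\emph{Case $V'=R$.} Replace $R$ by $R-v+u_1+u_2$. This is an independent set in $G-P-Q$ of size $r'+1$, which contradicts~\eqref{PQRmin:r'}. Vertex $v$ simply moves to $S$, and $P$, $Q$ are unchanged.

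\emph{Case $V'\in\{P,Q\}$, say $V'=P$ by symmetry.} Since $P\in\cA$, there is an accessible path $P=V_{i_1}V_{i_2}\cdots V_{i_\ell}=R$ in $D$; take it shortest, so it is either $PR$ or $PQR$.
\begin{itemize}
\item \emph{Path $PR$.} Let $w\in P$ be a witness with no neighbor in $R$. If $w\neq v$, set $P'=P-v-w+u_1+u_2$, which has size $p$, and $R'=R+w$, which has size $r'+1$. The vertex $v$ goes to $S$. If $w=v$, set $P'=P-v+u_1$ and $R'=R+v$, keeping $u_2$ in $S$. Either way $P'$, $Q$, $R'$ are independent and $|R'|=r'+1$, a contradiction.
\item \emph{Path $PQR$.} Use a witness $w\in P$ movable to $Q$ and a witness $z\in Q$ movable to $R$, and move them along the path. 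If $w\neq v$, set $P'=P-v-w+u_1+u_2$, $Q'=Q-z+w$ and $R'=R+z$.
\end{itemize}

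The main obstacle is the interaction between the witnesses and the moved vertices. The witness $w$ might equal $v$. A moved vertex might also create an edge with the set it enters: for instance, $w$ moving into $Q$ is fine, since $w$ has no neighbor in $Q$, and the newly added $u_i$ are not in $Q$. Witness validity is preserved because each set loses or gains only the prescribed vertices, and non-adjacency of each witness to its target set was measured before any of the $u_i$ entered that set. The only new vertices in $P'$ are the $u_i$, and they have no neighbors in $P-v$.

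In the case $w=v$, drop $u_2$ and simply use $u_1$ to replace $v$, exactly as in Claim~\ref{claim3.1}: take $P'=P-v+u_1$, then move $v$ to $Q$ and $z$ to $R$. Here $v$ has no neighbor in $Q$ because it is a witness for the arc $PQ$.

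In every case $R$ grows while $n$ is unchanged, contradicting~\eqref{PQRmin:r'}. This proves the claim.
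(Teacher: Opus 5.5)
Your proof is correct and follows the same route as the paper. In both, you move the witnesses along a shortest $V',R$-path in $D$, put $u_1,u_2$ into $V'$ in place of $v$, and thereby gain a vertex in $R$. The only difference is that the paper rules out $v$ being the first witness by citing Claim~\ref{claim3.1}, whereas you handle the case $w=v$ directly with the same swap argument that proves Claim~\ref{claim3.1}.
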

\begin{proof} Assume that $v\in V'\in\mathcal{A}$ is the common solo neighbor of two non-adjacent vertices $u_1,u_2\in S$. If $V'\not= R$, then $D$ contains a $V',R$-path. Let $v'\in V'$ be a vertex with no neighbor in the successor of $V'$ on $V',R$-path. By Claim~\ref{claim3.1}, $v\not=v'$. We move the witnesses along $V',R$-path, then move $u_1,u_2$ to $V'$ and move $v$ to $S$. Now we get more vertices in $R$, contradicting~\eqref{PQRmin:r'}.\end{proof}

\begin{Claim}\label{claim3.3} Every vertex in $V'\in\{P,Q\}$ that is the solo neighbor of a vertex in $S$ has neighbors in each class of $\mathcal{A}\setminus\{V'\}$.
\end{Claim}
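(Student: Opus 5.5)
By the symmetry between $P$ and $Q$, take $V'=P$. Let $v\in P$ be the solo neighbor of some $u\in S$. We must show that $N_Q(v)\neq\emptyset$ and $N_R(v)\neq\emptyset$. Since $|\cB|=1$, all of $P,Q,R$ are accessible, so $D$ has accessible paths from $P$ to $R$ and from $Q$ to $R$. The plan is to reuse the switch from Claim~\ref{claim3.1}: replace $v$ by $u$ in $P$. Because $v$ is the only neighbor of $u$ in $P$, the set $P-v+u$ is independent of size $p$. Then put $v$ into a class where it has no neighbors and push a witness toward $R$, which would increase $r'$ and contradict (\ref{PQRmin:r'}).

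\emph{The $R$ part.} Suppose $N_R(v)=\emptyset$. Set $P'=P-v+u$ and $R'=R+v$, leaving $Q$ unchanged. All three sets are independent, $|P'|=p$, and $|R'|=r'+1$. This contradicts (\ref{PQRmin:r'}). (This is exactly the second half of the proof of Claim~\ref{claim3.1}, and it needs no hypothesis on $D$.)

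\emph{The $Q$ part.} Suppose $N_Q(v)=\emptyset$. Since $Q\in\cA$, there is a shortest accessible path $Q=V_1V_2\cdots V_\ell=R$ in $D$, with $\ell\ge 2$.

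If $\ell=2$, then $QR\in E(D)$ and Claim~\ref{claim3.1} applies directly.

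Otherwise the path passes through $P$, i.e.\ it is $QPR$ (the only classes are $P,Q,R,S$, and $S$ is not accessible). Let $w\in Q$ be movable to $P$ and $w'\in P$ be movable to $R$. Consider
\[
P''=P-v-w'+u+w,\qquad Q''=Q-w+v,\qquad R''=R+w'.
\]
The point needing care is that $w'\neq v$, which holds because $v$ has the neighbor $u$ outside $P$ but we need more: we need $N_R(v)\neq\emptyset$, which was just established. So $v$ is not movable to $R$, hence $w'\ne v$. Then we check independence:
\begin{itemize}
\item $u$ is nonadjacent to $P-v$ because $v$ is its only neighbor in $P$.
\item $w$ has no neighbors in $P$, since it is movable to $P$.
\item $uw\notin E(G)$ is the point that must be arranged. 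If $uw\in E(G)$, instead keep $w$ in $Q$ and note that $Q+v$ is independent of size $q+1$; then form $Q'''=Q+v-w$ and $P'''=P-v-w'+u+w$, which fails for the same reason. So the better route is as follows. Since $Q+v$ is independent, the path $PR$ shows $P$ is accessible. Move $u$ into $P$ and $v$ into $Q$ so that $Q$ gains a vertex; then drop any one vertex of $Q+v$ other than $v$ into $S$, or simply keep $|Q|=q+1$ (which is allowed). Then $P-v+u$ has size $p$ and contains $w'$ (as $w'\ne v$), so moving $w'$ to $R$ leaves $P$ one short. Refill $P$ with a vertex $w\in Q$ movable to $P$; if $uw\in E(G)$, use $|Q+v|=q+1$ and the freedom in choosing which $Q$-vertex to use. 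I expect this choice, which needs a vertex of $Q$ nonadjacent to all of $P-v-w'+u$, to be the main obstacle.
\end{itemize}

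The first thing to try for that obstacle is to use the minimality (\ref{PQRmin:B}) and the degree bound of Claim~\ref{d4}, together with Claim~\ref{claim3.2}. Alternatively, apply Claim~\ref{claim3.1} after first showing that $QR\in E(D)$ whenever $N_Q(v)=\emptyset$: the set $Q+v$ of size $q+1$ gives $q+1$ candidates, while $R$ and $u$ have bounded degree ($d\le 4$), so some vertex of $Q+v$ other than $v$ should have no neighbor in $R$, i.e.\ is movable to $R$ after re-selecting $Q$. Once such a vertex is found, the contradiction to (\ref{PQRmin:r'}) follows exactly as in Claim~\ref{claim3.1}.
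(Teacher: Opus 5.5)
Your $R$-part and the subcase $QR\in E(D)$ are correct. The subcase in which the shortest accessible path from $Q$ to $R$ is $QPR$ is left open, and that is the whole content of the claim.

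The switch $P''=P-v-w'+u+w$ needs a vertex $w\in Q$ with no neighbor in $P-v-w'+u$, and nothing you establish supplies one. The witness of $QP$ may be unique and adjacent to $u$, while every other vertex of $Q$ has neighbors in $P-v-w'$. The extra room $|Q+v|=q+1$ does not change this.

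Your fallback pigeonhole argument also fails. Claim~\ref{d4} bounds degrees only in $S\cup R$, so it gives $e[Q,R]\le 4r'$. In this subcase every vertex of $Q$ already has a neighbor in $R$, since that is exactly what $QR\notin E(D)$ means. This is consistent with $e[Q,R]\le 4r'$ whenever $q\le 4r'$, and the hypotheses allow that range. So no vertex of $Q+v$ is forced to be movable to $R$.

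The missing idea is that this subcase is ruled out by counting edges, not by switching. Suppose $QR\notin E(D)$. Then $e[Q,R]\ge q$, and $QP,PR\in E(D)$.

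\begin{itemize}
\item Since $PR\in E(D)$, Claim~\ref{claim3.1} (the ``resp.'' version) gives every solo neighbor in $Q$ of a vertex of $S$ a neighbor in $P$.
\item Unconditionally, as in your $R$-part, every solo neighbor in $P$ has a neighbor in $R$.
\item By Claim~\ref{claim3.2} and $\Delta(G[S])\le 1$, each solo neighbor serves at most two vertices of $S$.
\end{itemize}

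So if $x$ vertices of $S$ have a solo neighbor in $Q$, then
\[
e[S,Q]+e[Q,P]\ge x+2(s'-x)+\lceil x/2\rceil\ge 3s'/2,
\]
and symmetrically $e[S,P]+e[P,R]\ge 3s'/2$. Add $e[S,R]\ge s'$, $e[Q,R]\ge q\ge r$, and $e(G[S])\ge\ol{t}_{s',r-1}$ from \eqref{3.1}. This gives
\[
e(G)\ge 4s'+q+\ol{t}_{s',r-1}\ge 5s'+1>5s,
\]
where the middle inequality follows by splitting into $s'\le r-1$ and $r\le s'\le 2r-2$. This contradicts $e(G)\le 5s$.

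Hence $QR\in E(D)$ always, and Claim~\ref{claim3.1} then yields both $N_Q(v)\neq\emptyset$ and $N_R(v)\neq\emptyset$ at once. This is exactly how the paper proves the claim.
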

\begin{proof} By symmetry, we let $V'=P$ and $v\in P$ be a solo neighbor of $u\in S$. If $QR\not\in E(D)$, then $e[Q,R]\ge q$ and $PR,QP\in E(D)$. Let $v'\in Q$ be a solo neighbor of $u'\in S$. By Claim~\ref{claim3.1}, $N_{P}(v')\not=\emptyset$. Recall that every vertex in $S$ has at least one neighbor in each class in $\mathcal{A}$. Assume that there are $x$ vertices in $S$ that have a solo neighbor in $Q$, and $s'-x$ vertices in $S$ that have at least two neighbors in $Q$. {Since} $\Delta(G[S])\le 1$, by Claim~\ref{claim3.2}, there are at least $\lceil\frac{x}{2}\rceil$ vertices in $S$ that have the distinct unique neighbors in $Q$. Thus 
$$e[S,Q]+ e[Q,P]\ge x+2(s'-x)+\lceil\frac{x}{2}\rceil=2s'-\lfloor\frac{x}{2}\rfloor\ge 2s'-\frac{x}{2}\ge\frac{3s'}{2}.$$ By Claim~\ref{claim3.1}, $N_{R}(v)\not=\emptyset$. Similarly, we obtain $e[S,P]+e[P,R]\ge\frac{3s'}{2}$. Thus \begin{align*}
e(G)&=e(G[S])+(e[S,P]+ e[P,R])+(e[S,Q]+ e[Q,P])+e[S,R]+e[Q,R]\\
&\ge\ol{t}_{s',r-1}+\frac{3s'}{2}+\frac{3s'}{2}+s'+q\\
&=4s'+q+\ol{t}_{s',r-1}
\end{align*}
{ Since}  $q\ge r$, by~\eqref{3.1}, $e(G)>5s$. This contradiction to $e(G)\le 5s$ yields $QR\in E(D)$. By Claim~\ref{claim3.1}, we are done.\end{proof}

We do the following discharging. At the start, each $v\in V(G)$ has charge $ch(v)=0$ and each $e\in E(G)$ has charge $ch(e)=1$. So, $\sum_{x\in V(G)\cup E(G)}ch(x)=|E(G)|\le 5s$. Now, 

Step 1.\ \ Every edge $xy\in E(G)$ such that  $x\in S,y\not\in S$ gives charge $1$ to $x$.

Step 2.\ \ Every edge $xy\in E(G)$ such that  $x,y\in S$ gives charge $1/2$ to $x$ and $1/2$ to $y$.

Step 3.\ \ Every edge $xy\in E(G)$ such that  $x\in P\cup Q$ and $y\in R$ gives charge $1$ to $x$.

Step 4.\ \ Every edge $xy\in E(G)$ such that $x,y\in P\cup Q$ gives charge 1/2 to $x$ and 1/2 to $y$.

Step 5.\ \ Every vertex $v$ in $P\cup Q\cup R$ distributes its charge equally between {its solo neighbors in $S$.}

Let the charge of each $x\in V(G)\cup E(G)$ after Step $j$ be denoted by $ch_j(x)$. Then $ch_4(e)= 0$ for every $e\in E(G)$ and $ch_5(v)\geq 0$ for every $v\in P\cup Q\cup R$. 
Hence
\begin{equation}\label{3.2}
    e(G)\ge\sum_{v\in S}ch_5(v).
\end{equation}
We claim that 
\begin{equation}\label{3.3}
    \parbox{0.9\linewidth}{ { (i) $ch_5(v)=5$ and $d_S(v)=1$ for each $v\in S$; and\\
    (ii) The solo neighbors of $v$ in $P,Q$ are also solo neighbors of its neighbor in $S$.}}
\end{equation}
By~Claim~\ref{d4}, $d(v)\leq 4$ for every $v\in S$. Recall that $v\in S$ has at least one neighbor in each $V'\in\cA$. Suppose that $d_{S}(v)=0$. 
By Claim~\ref{claim3.2}, solo neighbors of $v$ are not solo neighbors of other vertices in $S$. Assume that $v$ has exactly $x$ solo neighbors in $P\cup Q$. Since $d(v)\leq 4$,  $x\ge 1$. Then $ch_1(v)\geq x+(2-x)\cdot 2+1$. Let $u\in P$ (resp. $u\in Q$) be a solo neighbor of $v$. By Claim~\ref{claim3.3}, $u$ has neighbors in each class in $\{Q,R\}$ (resp. $\{P,R\}$). Hence $ch_4(u)\geq\frac{1}{2}+1=\frac{3}{2}$. In Step 5, each solo neighbor will send all its charge to $v$. Therefore, $ch_5(v)\ge x\cdot(1+\frac{3}{2})+(2-x)\cdot 2+1=5+\frac{x}{2}>5$.

If $d_{S}(v)=1$, we let $N_{S}(v)=\{v'\}$. Moreover, let $N_{P}(v)=\{u_1\}$, $N_{Q}(v)=\{u_2\}$ and $N_{R}(v)=\{u_3\}$. Then $ch_2(v)=3+\frac{1}{2}$. By Claim~\ref{claim3.3}, $ch_4(u_i)\ge\frac{1}{2}+1=\frac{3}{2}$ for $1\le i\le 2$. By Claim~\ref{claim3.2}, $u_i$ ($1\le i\le 2$) can be a solo neighbor of only $v$ and $v'$. In Step 5, each $u_i$ ($1\le i\le 2$) sends at least half of its charge to $v$. Therefore, $ch_5(v)\ge 3+\frac{1}{2}+2\cdot\frac{1}{2}\cdot\frac{3}{2}=5$ and $ch_5(v)=5$ only if $u_1,u_2$ are the common solo neighbors of $v$ and $v'$. 

{ By~\eqref{3.2} and above discussion, $e(G)\ge\sum_{v\in S}ch_5(v)\ge 5s'\ge 5s$, with equality only if $s'=s$ and $d_S(v)=1$ for each $v\in S$. Since $e(G)\le 5s$, this proves~\eqref{3.3}. Moreover, we conclude that 
\begin{equation}\label{3.4}
    \mbox{$s$ is divisible by $2$ and $G[S]=\frac{s}{2}K_2$.}
\end{equation}

For $v\in S$, let $N_{S}(v)=\{v'\}$, $N_{P}(v)=\{u_p\}$ and $N_{Q}(v)=\{u_q\}$. By~\eqref{3.3}(ii), $N_{P}(v')=\{u_p\}$ and $N_{Q}(v')=\{u_q\}$. Suppose that $u_pu_q\notin E(G)$. By~\eqref{11'}, $d(u_p)\le 4$. By Claim~\ref{claim3.3}, we may assume that $N_{Q}(u_p)=\{u_q'\}$ and $u_q'\not=u_q$. We switch $v'$ with $u_p$. Then $N_{Q}(v)\not=N_{Q}(u_p)$, contradicting~\eqref{3.3}(ii). Thus $$G[\{v, v', u_p, u_q\}]=K_4.$$ Note that each $x\in \{v, v', u_p, u_q\}$ has a neighbor in $R$. By $s\le 2r-2$ and~\eqref{3.4}, every component of $G[S\cup P\cup Q]$ is either a $K_4$ or a $K_1$, and every such $K_1$ is also an isolated vertex of $G$. We claim the following.
\begin{equation}\label{3.5}
    \parbox{0.9\linewidth}{For each $K_4$, its neighbors in $R$ are disjoint from the neighbors in $R$ of every other $K_4$.}
\end{equation}
Indeed, if~\eqref{3.5} does not hold, then we could rearrange the vertices of the $K_4$'s so that a common neighbor becomes the solo neighbor of two non-adjacent vertices in $S$ contradicting Claim~\ref{claim3.2}.

If $|N_R(\{v, v', u_p, u_q\})|\ge 2$, choose vertices $u_r,u_r'\in N_R(\{v, v', u_p, u_q\})$ such that $d(u_r)$ is maximum and $d(u_r')$ is minimum. Then we may assume that $u_ru_p, u_r'v'\not\in E(G)$. Let $u_p^0\in P$ be an isolated vertex. We move $v',u_p^0$ to $R$, and move $u_r$ to $P$. Then $P-u_p^0+u_r$ and $(R-u_r)\cup\{v',u_p^0\}$ are also independent sets, contradicting~\eqref{PQRmin:r'}. Thus $|N_R(\{v, v', u_p, u_q\})|=1$. By~\eqref{3.4} and~\eqref{3.5}, $G=\frac{s}{2}K_5\cup (n-\frac{5s}{2})K_1$. Hence $G$ satisfies statement (b) of our theorem, a contradiction. 

}

\section{Case of $|\cB|=2$}\label{B2}

In this section $\cA=\{X,R\}$ and $\cB=\{S,Y\}$, where $\{X,Y\}=\{P,Q\}$. 
Let $y=|Y|$, so $y\in\{p,q\}$ and $y\ge r>r'$. Let $B=\bigcup_{W\in \cB} W$.
By the definition of accessible, each $v\in B$ has at least one neighbor in each $V'\in \cA$. 
{ By Claim~\ref{d4}, for every $u\in S$,
$$d_{G[B]}(u)=d_G(u)-|N_{X\cup R}(u)|\le 2.$$}
If $G[B]$ contains  vertex disjoint  induced { $\ol{K}_{|Y|}$ and $\ol{K}_r$}, 
 then these subgraphs together with $X$ form  vertex disjoint  induced $\ol{K}_p, \ol{K}_q$ and $\ol{K}_r$, contradicting the choice of $G$.
Thus by Theorem~\ref{PQ}, $e(G[B])\ge 3(s'-r')$.

 Let $\cB'$ denote the set of classes in $\cB$ not reachable from $S$.
Let $B'=\bigcup_{W\in \cB'} W$. 
Note that if $SY\in E(D)$, then $B'=\emptyset$, otherwise { $ B'=Y$.}
Since $X$ is accessible,  $XR\in E(D)$, and we have the following claims similar to  the ones in Section~\ref{B1}.

{
\begin{Claim}\label{claim4} If $v\in X$ is the solo neighbor of a vertex $u\in B\setminus B'$, then  $N_{R}(v)\not=\emptyset$.
\end{Claim}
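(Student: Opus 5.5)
The plan is to argue by contradiction, using the same exchange as in the proof of Claim~\ref{claim3.1}. Suppose $v\in X$ is the solo neighbor of some $u\in B\setminus B'$ and $N_R(v)=\emptyset$. Since $v$ is the only neighbor of $u$ in $X$, the set $X-v+u$ is independent. Since $v$ has no neighbor in $R$, the set $R+v$ is independent and has $r'+1$ elements. The goal is to perform this swap while keeping a partition into independent sets of sizes $p$, $q$ and more than $r'$, which contradicts (\ref{PQRmin:r'}).

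The swap is immediate if $u\in S$. Then the new classes are $X'=X-v+u$ with $|X'|=|X|$, $Y$ unchanged, $R'=R+v$, and $S'=S-u$. This contradicts (\ref{PQRmin:r'}) directly, and (\ref{PQRmin:n}) is unaffected.

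The remaining case is $u\in Y\setminus B'$. Since $u\notin B'$, we have $Y\notin\cB'$, so $Y$ is reachable from $S$ in $D$, which forces $SY\in E(D)$. Hence some $w\in S$ has no neighbor in $Y$. We then move $u$ from $Y$ to $X$, move $v$ from $X$ to $R$, and move $w$ from $S$ to $Y$. I need to check that $w\ne u$, which holds because $u\in Y$ and $w\in S$. With $Y'=Y-u+w$:
\begin{enumerate}
\item $Y'$ is independent because $w$ has no neighbor in $Y\supseteq Y-u$;
\item $X'=X-v+u$ is independent because $v$ is the only neighbor of $u$ in $X$;
\item $R'=R+v$ is independent because $N_R(v)=\emptyset$.
\end{enumerate}
The sizes of $X'$ and $Y'$ equal those of $X$ and $Y$, and $|R'|=r'+1$. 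This again contradicts (\ref{PQRmin:r'}).

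The main thing to verify carefully is the second case: that $u\notin B'$ really gives the edge $SY$ in $D$. This follows from the definition of $B'$, which is empty when $SY\in E(D)$ and equals $Y$ otherwise. A second point is that the swaps do not interfere. They touch three distinct vertices $u$, $v$, $w$, and only the adjacencies stated above are used, so each new class stays independent. No degree condition is needed, because the contradiction comes from (\ref{PQRmin:r'}), which takes precedence over (\ref{PQRmin:highdeg}) and (\ref{PQRmin:B}).
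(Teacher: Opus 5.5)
Your proof is correct and matches the paper's argument: you swap $u$ and $v$ directly when $u\in S$. When $u\in Y$, you use a witness $w\in S$ of the edge $SY\in E(D)$ and move $w\to Y$, $u\to X$, $v\to R$; in both cases this enlarges $R$ and contradicts (\ref{PQRmin:r'}).
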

\begin{proof} If  $u\in S$, then we may move $v$ to $R$ and $u$ to $X$, contradicting the choice of $r'$.
If $u\in Y$, then $SY\in E(D)$, and there is some $w\in S$ witnessing the directed edge $SY$.
Then we may move $w$ to $Y$, $u$ to $X$ and $v$ to $R$, again contradicting the choice of $r'$.
\end{proof}
}

\begin{Claim}\label{kx}
    No vertex $v\in X$ is a solo neighbor of two non-adjacent vertices of $B\setminus B'$.
\end{Claim}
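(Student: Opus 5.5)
I will argue by contradiction, mirroring Claim~\ref{claim3.2}. Suppose $v\in X$ is the common solo neighbor of two non-adjacent vertices $u_1,u_2\in B\setminus B'$; I will build a new partition with a larger $R$, contradicting (\ref{PQRmin:r'}). The main tool is Claim~\ref{kx}'s predecessor, Claim~\ref{claim4}: since each $u_i$ is in $B\setminus B'$, that claim gives $N_R(v)\neq\emptyset$. Hence $v$ is not a witness of $XR\in E(D)$. Fix a witness $w\in X$ with $N_R(w)=\emptyset$; then $w\neq v$.

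The basic move is as follows. Move $w$ into $R$, remove $v$ from $X$ (placing it in $S$), and put $u_1,u_2$ into $X$. The set $X-\{v,w\}+\{u_1,u_2\}$ is independent, because $u_1,u_2$ are non-adjacent and $v$ was their only neighbor in $X$. It also has size $|X|$. The set $R+w$ is independent of size $r'+1$. If both $u_1,u_2\in S$, then $Y$ and the remaining classes are untouched, and $S$ changes to $S-\{u_1,u_2\}+v$. This contradicts (\ref{PQRmin:r'}).

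The remaining case is when some $u_i$ lies in $Y$. Then $Y\notin\cB'$, so $SY\in E(D)$, and there is a witness $z\in S$ with no neighbor in $Y$.
\begin{enumerate}[label=\textup{(\roman*)}]
\item If exactly one, say $u_1$, is in $Y$: move $z$ into $Y$ to replace $u_1$. This requires $z\neq u_2$ when $u_2\in S$. If the only witness is $z=u_2$, then instead move $u_2$ into $Y$ in place of $u_1$, and put only $u_1$ into $X$ in place of $v$. That is legal since $v$ is $u_1$'s solo neighbor in $X$. Then move $w$ to $R$; $X$ loses $w$, so I also need $v$ retained. I would handle this by using $u_1$ to replace $w$ instead: $X-w+u_1$ is independent only if $u_1w\notin E$, which holds because $v$ is $u_1$'s only neighbor in $X$.
\item If both are in $Y$: I need two witnesses, or one witness $z$ plus sending $v$ back. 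Here I replace only one of them. Take $X-w+u_1$ (independent by solo-ness), $Y-u_1+z$, and $R+w$. This already increases $r'$.
\end{enumerate}

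So in fact a single $u$ suffices whenever $u\in Y$ has solo neighbor $v$ and $w\neq v$, and the non-adjacency hypothesis is used only in the all-in-$S$ case. I should double-check this, since it would seem to contradict Claim~\ref{claim4}'s need for $N_R(v)\neq\emptyset$. The check is that $X-w+u_1$ requires $u_1$ to have no neighbor in $X-w$, but $v\in X-w$ is a neighbor. So that shortcut fails, and $v$ must leave $X$, hence it needs two replacements.

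The corrected handling of the $Y$ case is therefore: put both $u_1,u_2$ into $X$, remove $v$ and $w$, and refill $Y$.
\begin{itemize}
\item If $u_1\in Y$ and $u_2\in S$, use the witness $z$ to refill $Y$ when $z\neq u_2$. When $z=u_2$, put $u_2$ into $Y$ instead and put $v$ back into $S$ as before; $X$ then has $|X|-1$ elements, and I repair this using the fact that $u_2$ has no $Y$-neighbor.
\item If both $u_1,u_2\in Y$, one witness refills only one slot. Here I would try replacing $w$'s role: send $v$ itself toward $R$ only if it has no $R$-neighbor, which is false. So this subcase is the main obstacle.
\end{itemize}
For the both-in-$Y$ subcase I would try to find a second vertex of $S$ with no neighbor in $Y-u_1-u_2+\cdots$, using $d_{G[B]}(u)\le2$ and $e(G)\le5s$. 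Failing that, I would show $v$ can be placed in $Y$ if $N_Y(v)\subseteq\{u_1,u_2\}$.
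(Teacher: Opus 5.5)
Your proof is incomplete: it covers only some of the cases. The simultaneous move ($X-\{v,w\}+\{u_1,u_2\}$, $R+w$, $v$ into $S$) is correct and contradicts (N2) when $u_1,u_2\in S$. It also works when $u_1\in Y$, $u_2\in S$ and some $z\neq u_2$ in $S$ has no neighbour in $Y$, refilling $Y$ by $Y-u_1+z$.

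Two cases remain open. The first is $u_1,u_2\in Y$, which you yourself call the main obstacle. The second is $u_1\in Y$, $u_2\in S$ with $u_2$ the only vertex of $S$ having no neighbour in $Y$. There your ``repair'' is not an argument: knowing that $u_2$ has no neighbour in $Y$ does not supply a vertex to complete $X-\{v,w\}+u_1$.

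The difficulty comes from insisting on enlarging $R$ directly. That forces $v$ out of $X$ and both $u_1,u_2$ into $X$, so every $u_i\in Y$ needs its own replacement in $Y$ taken from $S$. The fact $SY\in E(D)$ guarantees only one such vertex. Nothing you invoke (the bound $d_{G[B]}\le 2$, $e(G)\le 5s$, or putting $v$ into $Y$) produces a second one.

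The missing idea is to contradict (N4), the minimality of $|\cB|$, instead of (N2). Then only one of the two vertices has to move. Let $w\neq v$ be the vertex of $X$ with no neighbour in $R$, as in your proposal.

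\emph{Some $u_i$ lies in $S$, say $u_1$.} Swap $u_1$ and $v$. The set $X-v+u_1$ is independent. Moreover $u_2$ has no neighbour in it, because its only $X$-neighbour was $v$ and $u_1u_2\notin E(G)$. The new $X$ is still accessible through $w$. So the class containing $u_2$ becomes accessible. If that class is $S$, this is impossible by (N2); if it is $Y$, then $|\cB|$ drops to $1$.

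\emph{Both $u_1,u_2$ lie in $Y$.} One witness $z\in S$ of $SY$ suffices. Set $Y_1=Y-u_1+z$, $X_1=X-v+u_1$ and $S_1=S-z+v$. Now $u_2\in Y_1$ is movable to $X_1$, and $X_1$ reaches $R$ via $w$. Hence $Y_1$ is accessible.

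In both cases $r'$ is unchanged. The number of vertices of degree at least $5$ in $P\cup Q$ does not increase, since the only vertex entering $P\cup Q$ comes from $S$ and has degree at most $4$ by Claim~\ref{d4}. So (N1)--(N3) still hold while $|\cB|$ has decreased, contradicting (N4).
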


\begin{proof}
    Suppose that some $v\in X$ is a solo neighbor of non-adjacent vertices $u,w\in B\setminus B'$.
    By Claim~\ref{claim4}, $v$ has a neighbor in $R$, so there is some $v'\in X\setminus\{v\}$ that has no neighbor in $R$.

    If at least one of $u,w$ is in $S$, say $u\in S$, then we may switch $u$ and $v$.
    Now $w$ is movable to the new $X$, and since $X$ is still accessible, the class containing $w$ is now accessible. This contradicts the minimality of $|\cB|$ in~\eqref{PQRmin:B}. 
    If both $u,w$ are in $Y$, then $Y$ is reachable from $S$, so we may fix a vertex $s_0\in S$ movable to $Y$.
    { Now we move $s_0$ to $Y$,  $u$ to $X$ and $v$ to $S$.}
    Again $w$ is movable to the new $X$, and now $Y$ is
accessible, contradicting~\eqref{PQRmin:B}. \end{proof}

\textbf{Case 1: $|\cB'|=0$.}
We first show that $e(Y,X)+e(X,R)\ge 2y$.
Let $Y_1=\{v\in Y: |N_X(v)|=1\}$. 
For $x\in X$, put $Y_x=\{v\in Y_1:N_X(v)=\{x\}\}$. 
By Claim~\ref{kx} and the facts that $Y$ is independent and $Y\subseteq B\setminus B'$, each $Y_x$ has size at most $1$. 
By Claim~\ref{claim4}, whenever $Y_x\neq\emptyset$, the vertex $x$ has a neighbor in $R$. 
Now the vertices of $Y\setminus Y_1$ send at least two edges to $X$, and each vertex of $Y_1$ contributes one edge to $X$ together with a distinct edge from its solo neighbor in $X$ to $R$.
Thus we have that $e(Y,X)+e(X,R)\ge 2y$.

Also, every vertex of $Y$ has a neighbor in $R$, so $e(Y,R)\ge y$. 
Since every vertex of $S$ has a neighbor in both $X$ and $R$, we have $e(S,X\cup R)\ge 2s'$. 
By Theorem~\ref{PQ}, $e(G[B])\ge 3(s'-r')$. 
Then we obtain
$$
\begin{aligned}
e(G)&\ge e(G[B])+e(S,X\cup R)+e(Y,X)+e(Y,R)+e(X,R)\\
&\ge 3(s'-r')+2s'+2y+y\\
&=5s'+3y-3r'>5s,
\end{aligned}
$$
where the last inequality follows from $s'\ge s$ and $y\ge r>r'$. 
This contradicts $e(G)\le 5s$.

\textbf{Case 2: $\cB'=\{Y\}$.} Thus $B\setminus B'=S$.
Since $Y$ is not reachable from $S$, every vertex of $S$ has a neighbor in $Y$, so $e(S,Y)\ge s'$. 
Also every vertex of $S$ has a neighbor in $R$, and every vertex of $Y$ has a neighbor in both $X$
and $R$.

Now we show that $e(S,X)+e(X,R)\ge\frac{3s'}{2}.$
Let $S_1=\{v\in S: |N_X(v)|=1\}$. 
For $x\in X$, put $S_x=\{v\in S_1:N_X(v)=\{x\}\}$. 
By Claim~\ref{kx}, each nonempty $S_x$ is a clique in $G[S]$. 
Since $Y$ is not reachable from $S$, we have $\Delta(G[S])\le 1$, and hence $|S_x|\le 2$.
By Claim~\ref{claim4}, every nonempty $S_x$ contributes an edge from $x$ to $R$. 
Therefore, if $k=|S_1|$, then
$$
e(S,X)+e(X,R)\ge 2(s'-k)+k+\left\lceil\frac{k}{2}\right\rceil
\ge \frac{3s'}{2},
$$
as desired.

Since $\Delta(G[S])\le 1$ and $\alpha(G[S])\le r-1$, we have
$s'\le 2r-2$, and Tur\'an's theorem gives
$$
e(G[S])\ge \ol t_{s',r-1}
=
\begin{cases}
0, &\mbox{if }0\le s'\le r-1,\\
s'-r+1, &\mbox{if }r\le s'\le 2r-2.
\end{cases}
$$
Now we have that
$$
\begin{aligned}
e(G)&\ge e(G[S])+e(S,X\cup Y\cup R)+e(X,R)+e(Y,X\cup R)\\
&\ge \ol t_{s',r-1}+\frac{7s'}{2}+2y.
\end{aligned}
$$
If $s'\le r-1$, then, since $y\ge r\ge s'+1$,
$$
\ol t_{s',r-1}+\frac{7s'}{2}+2y
\ge \frac{7s'}{2}+2(s'+1)>5s'\ge 5s.
$$
If $r\le s'\le 2r-2$, then
$$
\ol t_{s',r-1}+\frac{7s'}{2}+2y
\ge s'-r+1+\frac{7s'}{2}+2r
=\frac{9s'}{2}+r+1>5s'\ge 5s,
$$
In both cases $e(G)>5s$, contradicting $e(G)\le 5s$.
{ In particular we don't have extremal graphs in this case.}

\section{Case of $|\cB|=3$}\label{B3}

In this section, $|\cB|=3$, thus
 $\cB=\{P,Q,S\}$ and each $x\in S\cup P\cup Q$ has a neighbor in $R$. So 
\begin{equation}\label{15'}
 |E(S\cup P\cup Q,R)|\geq s'+p+q,
 \end{equation}
 and by Claim~\ref{d4}, $s'+p+q\leq 4r'$. Since $p,q\geq r\geq r'+1$, this yields
\begin{equation}\label{12'}
 s\leq s'\leq 4r'-p-q\leq 2r'-2\leq 2(r-2).
\end{equation}

Let $\cB'$ denote the set of sets in $\cB$ not reachable from $S$. Modulo (\ref{PQRmin:n})--(\ref{PQRmin:B}) we choose $(P,Q,R)$ so that
\begin{enumerate}[label=\textup{(N\arabic*)}, ref=\textup{N\arabic*}, start=5]
\item \label{PQRmin:cB'}  {\it  $|\cB'|$ is minimum.}
\end{enumerate}
 
Since $S$ is reachable from itself, we have three cases.

\bigskip
{\bf Case 1:} $\cB'=\emptyset$. Modulo (\ref{PQRmin:n})--(\ref{PQRmin:cB'}), we choose $(P,Q,R)$ so that
\begin{enumerate}[label=\textup{(N\arabic*)}, ref=\textup{N\arabic*}, start=6]
\item \label{PQRmin:sumd}  {\it $\sum_{y\in R}d(y)$ is minimum.}
\end{enumerate}

Similarly to~\eqref{PQ:dxy} (and with a similar proof) we now show
 \begin{equation}\label{PQR:dxy}
\parbox{0.9\linewidth}{ If $x\in S\cup P\cup Q$ has a solo neighbor $y$ in $R$, then $d(x)\geq d(y)$.
}
\end{equation}
Indeed, if $d(x)<d(y)$, then we move $x$ to $R$ and $y$ to $S$. Now, if $x$ was originally not in $S$, then by symmetry we may assume  $x\in Q$, 
consider a shortest $S,Q$-path in $D$ and move the witness(es) of the edges along the path. The new partition satisfies (\ref{PQRmin:n})--(\ref{PQRmin:cB'}), but $\sum_{y\in R}d(y)$ decreases, contradicting (\ref{PQRmin:sumd}). This proves~\eqref{PQR:dxy}.

We do the following discharging. At the start, the {\em charge} $ch(x)$ is $1$ when $x\in S\cup P\cup Q$ and is $0$ when $x\in R\cup E(G)$. Thus $\sum_{x\in V(G)\cup E(G)}ch(x)=s'+p+q$. Now,

Step 1: each edge $uv\in E(G[S\cup P\cup Q])$ gets $1/6$ from each of its two ends;

Step 2: each $u\in S\cup P\cup Q$ shares the remaining charge equally among its neighbors in $R$.

Let $ch^*(x)$ denote the resulting charge for $x\in V(G)\cup E(G)$. We claim that
 \begin{equation}\label{14'}
\mbox { $ch^*(v)\leq 2$ for every $v\in R$.
}
\end{equation}
Indeed, if $d(v)\leq 2$, this is trivial. If $d(v)=3$ and $uv\in E(G)$,
then either $v$ is not a solo neighbor of $u$ and $u$ gives to $v$ at most
$1/2$, or $v$ is  a solo neighbor of $u$ and by~\eqref{PQR:dxy},
$u$ gives to the incident edges at least $2(1/6)$ and to $v$ at most $1-1/3=2/3$. In all cases, $v$ gets at most $3(2/3)=2$.

Suppose now that $d(v)=4$. If $v$ is not a solo neighbor of $u$, then $u$ gives to $v$ at most $1/2$. Otherwise~\eqref{PQR:dxy} yields $d(u)\geq 4$, so $u$ gives to the incident edges at least $3(1/6)=1/2$ and to $v$  $1-1/2=1/2$. Anyway, $v$ gets at most $4(1/2)=2$.
 This proves~\eqref{14'}.

So, by the discharging rules and~\eqref{14'}, $s'+p+q\leq \frac{1}{3}|E(G[S\cup P\cup Q])|+2r'$ implying $e(G[S\cup P\cup Q])\geq 3(s'+p+q-2r')\geq 3(s+2)$.
Together with~\eqref{15'} and~\eqref{12'}, we get 
$$e(G)\geq 3(s+1)+(s'+p+q)\geq 4s+3+2r\geq 5s+5,$$ 
 a contradiction.

\medskip
{\bf Case 2:} $|\cB'|=1$. By the symmetry between $P$ and $Q$, we may assume that $\cB'=\{Q\}$. By the case,
$e(S\cup P,Q\cup R)+e(Q,R)\geq 2(s'+p)+q$. By Theorem~\ref{PQ},
$e(G[S\cup P])\geq 3(s'-r')$. Thus,
$e(G)\geq 2(s'+p)+q+3(s'-r')\geq 5s'+3$, a contradiction.

\medskip
{\bf Case 3:} $|\cB'|=2$. Then $\cB'=\{P,Q\}$. Let $S'=\{v\in S: |N(v)\cap (P\cup Q)|=2\}$ and $S''=S\setminus S'$.
\begin{Claim}\label{spq} No vertex in $P\cup Q$ is a unique neighbor of two non-adjacent vertices in $S$.
\end{Claim}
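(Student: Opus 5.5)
We argue by contradiction through a re-partition. Suppose that some $v\in P\cup Q$ is the unique neighbor in its class of two non-adjacent vertices $u,w\in S$; by the symmetry between $P$ and $Q$ we take $v\in P$. The plan is to exchange $u$ and $v$ and show that the resulting partition is still admissible for (\ref{PQRmin:n})--(\ref{PQRmin:B}) but has a smaller $|\cB'|$, contradicting (\ref{PQRmin:cB'}).

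Set $P^*=P-v+u$, $S^*=S-u+v$, and keep $Q$ and $R$ unchanged. Since $N_P(u)=\{v\}$, the set $P^*$ is independent and has size $p$. Nothing changes for $n$, $Q$ or $R$, so (\ref{PQRmin:n}) holds and $r'$ is the same; hence (\ref{PQRmin:r'}) still holds. For (\ref{PQRmin:highdeg}), the only new vertex in $P^*\cup Q$ is $u$, which has $d(u)\le 4$ by Claim~\ref{d4}. So the number of vertices of degree at least $5$ in $P\cup Q$ does not increase, and by minimality it stays the same. For (\ref{PQRmin:B}), $|\cB|=3$ is already forced to be minimal in this section. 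If the new auxiliary digraph had fewer inaccessible classes, that would contradict (\ref{PQRmin:B}); so in the new digraph we again have $|\cB|=3$, and $S^*$ is not accessible.

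Next we use $w$. We have $w\in S^*$, and $w$ is adjacent to no vertex of $P-v$ because $v$ was its only neighbor in $P$. It is also not adjacent to $u$, by hypothesis. Hence $N(w)\cap P^*=\emptyset$, so $w$ is movable to $P^*$ and $S^*P^*$ is an edge of the new auxiliary digraph. Therefore $P^*$ is reachable from $S^*$. The class $Q$ may or may not be reachable, but in either case the new $|\cB'|\le 1<2$. This contradicts (\ref{PQRmin:cB'}), and the same argument with $P$ and $Q$ interchanged handles $v\in Q$.

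The only delicate point is confirming that the swap does not break the earlier minimality conditions (\ref{PQRmin:highdeg}) and (\ref{PQRmin:B}). This is settled by the degree bound $d(u)\le 4$ from Claim~\ref{d4}, and by the fact that any drop in $|\cB|$, or an increase in $r'$, would itself contradict the choice of the partition. With that settled, the contradiction with (\ref{PQRmin:cB'}) is immediate.
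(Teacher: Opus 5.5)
Your proposal is correct and is essentially the paper's own argument: you swap $u$ with its solo neighbor $v$, check that (N1)--(N4) still hold, and note that $w$ becomes movable to $P-v+u$, which contradicts (N5). Your verification of (N1)--(N4) is more detailed than the paper's one-line assertion, using $d(u)\le 4$ for (N3) and the forced value $|\cB|=3$ for (N4).
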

\begin{proof} Suppose $v\in P$ is the unique neighbor of non-adjacent $u,u'\in S$.
Then we move $u$ to $P$ and $v$ to $S$. Now all conditions (N1)--(N4) still hold, but $u'$
is movable to $P-v+u$ contradicting (N5). This proves the claim.\end{proof}

\begin{Claim}\label{c3}
$e(G[S\cup P\cup Q])\geq 3s'-1.$
\end{Claim}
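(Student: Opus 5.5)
The plan is a direct count of the edges inside $S\cup P\cup Q$. We split them into edges of $G[S]$, edges between $S$ and $P\cup Q$, and edges between $P$ and $Q$, and use a small discharging argument to pay for the vertices of $S'$.

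\textbf{Degree structure of $S$.} Since $\cB'=\{P,Q\}$, neither $P$ nor $Q$ is reachable from $S$. Hence every $u\in S$ has a neighbor in $P$ and a neighbor in $Q$. Since $|\cB|=3$, $u$ also has a neighbor in $R$. By Claim~\ref{d4}, $d(u)\le 4$, so $d_S(u)\le 1$ and $G[S]$ is a matching. A vertex $u\in S''$ has at least $3$ neighbors in $P\cup Q$. A vertex $u\in S'$ has exactly one neighbor $p_u\in P$ and exactly one neighbor $q_u\in Q$, and these are solo neighbors of $u$. So $e[S,P\cup Q]\ge 2|S'|+3|S''|$, and it suffices to find on average one more edge per vertex of $S'$.

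\textbf{Step 1: solo neighbors have a neighbor across.} We show that $p_u$ has a neighbor in $Q$, and symmetrically that $q_u$ has a neighbor in $P$. Suppose $N_Q(p_u)=\emptyset$. Switch $u$ and $p_u$, exactly as in the proof of Claim~\ref{spq}. The set $P-p_u+u$ is independent and $r'$ is unchanged, so (N1)--(N4) still hold; this step must be checked carefully. Now $p_u\in S$ is movable to $Q$, so $Q$ becomes reachable from $S$, which contradicts (N5).

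\textbf{Step 2: discharging.} By Claim~\ref{spq}, each vertex of $P\cup Q$ is the solo neighbor of at most two vertices of $S$, and if of two, then these two are adjacent in $S$. Give charge $1$ to each edge of $G[S\cup P\cup Q]$ and apply three rules:
\begin{itemize}
\item each $S$--$(P\cup Q)$ edge gives $1$ to its endpoint in $S$;
\item each edge of $G[S]$ gives $1/2$ to each end;
\item each $P$--$Q$ edge gives $1/2$ to each end, and a vertex of $P\cup Q$ splits what it receives equally among its (at most two) solo neighbors in $S$.
\end{itemize}
A vertex $u\in S'$ with $d_S(u)=0$ receives $2$ from its two edges to $P\cup Q$ and at least $1/2$ from each of $p_u,q_u$ by Step~1, for a total of at least $3$. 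If $d_S(u)=1$, then $u$ receives $2+1/2$ from its edges plus at least $1/4$ from each of $p_u,q_u$, again at least $3$. Each $u\in S''$ already receives at least $3$ from its edges to $P\cup Q$. Summing gives $e(G[S\cup P\cup Q])\ge 3s'$.

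\textbf{Main obstacle.} The weak point is Step~1. The swap of $u$ and $p_u$ must preserve (N1)--(N4), in particular the minimality of $|\cB|$ and of the number of degree-$\ge 5$ vertices in $P\cup Q$ in (N3). If one configuration resists, for instance a single $P$--$Q$ edge $p_uq_u$ being the only cross-edge available to two pairs, I expect it to cost at most one unit of charge overall. That is why I would aim only for the bound $3s'-1$ as stated, and I would handle such an exceptional pair separately.
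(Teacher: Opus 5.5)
Your argument is correct. By a simpler route than the paper's, it proves the stronger bound $e(G[S\cup P\cup Q])\ge 3s'$.

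\textbf{The paper's route.} The paper does not show that every unique neighbor of a vertex of $S'$ has a neighbor across. It proves only the conditional statement (18), using a swap of $y\in Q$ with a vertex $x'\in P$ movable to $Q$. From (18) it gets, by symmetry, (20), which allows one exceptional vertex $y\in Q$. It then sends the full charge of each $P$--$Q$ edge to its $Q$-end, lets only $Q$-vertices redistribute, and pays for the exceptional $y$ with an extra unit of initial charge. That unit is exactly where the $-1$ comes from.

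\textbf{Your route.} Your Step~1 is the one-vertex swap of Claim~\ref{spq}, and it holds unconditionally for both $p_u$ and $q_u$. The obstacle you flag is not real. After the swap, compare the new configuration with the old one criterion by criterion:
\begin{itemize}
\item (N2): $R$ is still independent in $G-P^*-Q$, where $P^*=P-p_u+u$, and nothing larger exists there by (N2).
\item (N3): $d(u)\le 4$ by Claim~\ref{d4}, so the number of degree-$\ge 5$ vertices in $P\cup Q$ cannot increase, and a decrease would contradict (N3).
\item (N4): $|\cB|=3$ is already the maximum possible value, since $R\in\cA$, and a decrease would contradict (N4).
\end{itemize}
So nothing in (N1)--(N4) can change without contradicting the choice of $(P,Q,R)$. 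Otherwise $Q$ becomes reachable from $S$, violating (N5). This is the same justification the paper gives, tersely, in Claim~\ref{spq}.

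\textbf{Consequence.} Since Step~1 has no exception, your symmetric split (half of each $P$--$Q$ edge to each end, both $P$- and $Q$-vertices redistributing) gives every vertex of $S$ charge at least $3$. You therefore need neither the exceptional-pair handling nor the weakening to $3s'-1$, and $3s'$ suffices for the final contradiction of Case~3.

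\textbf{One detail to state explicitly.} When $d_S(u)=0$, $p_u$ and $q_u$ are solo neighbors of $u$ alone. Any other vertex of $S$ (including one in $S''$) with the same unique neighbor would be non-adjacent to $u$, contradicting Claim~\ref{spq}.
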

\begin{proof} First, we show that  
\begin{equation}\label{18}
\parbox{0.9\linewidth}{\em If $P$ has two vertices $x,x'$ movable to $Q$, then each $y\in Q$ that is a unique neighbor of some $z\in S'$ is not movable to $P$.
}
\end{equation}
Indeed, if such $y\in Q$ is a unique neighbor of $z\in S'$ with $N(z)\cap (P\cup Q)=\{y,y'\}$, then $y'\in P$, and we may assume that $y'\neq x'$. So after moving $y$ to $P$ and $x'$ to $Q-y$, we get a new partition satisfying (N1)--(N4) with $z$ movable to $Q-y+x'$, contradicting (N5). This proves~\eqref{18}.

By~\eqref{18} and the symmetry between $P$ and $Q$ we may assume that
\begin{equation}\label{20}
\parbox{0.9\linewidth}{\em for some $y\in Q$, each unique neighbor of a $z\in S'$ in $Q-y$ has a neighbor in
$P$.
}
\end{equation}

We fix $y\in Q$ satisfying~\eqref{20} and do the following discharging. At the start, the {\em charge} $ch(x)$ is $1$ when $x\in E(G[S\cup P\cup Q])\cup \{y\}$ and is $0$ when $x\in (S\cup P\cup Q)\setminus\{y\}$. Thus 
$\sum_{x\in V(G)\cup E(G)}ch(x)=e(G[S\cup P\cup Q])+1$. Now,

Step 1: each edge $uv\in E(S, P\cup Q)$ gives $1$ to its  end in {$S$};

Step 2: each edge $uv\in E( P, Q)$ gives $1$ to its  end in {$Q$};

Step 3: each edge $uv\in E(G[S])$ gives $1/2$ to each of its  ends;

Step 4: each $u\in  Q$ shares its current charge equally among the vertices in $S$ for which $u$ is the unique neighbor in $Q$.

Let $ch_i(x)$ denote the  charge of $x\in V(G)\cup E(G)$ after Step $i$. 
By the rules, $ch_4(x)\geq 0$ for each $x\in E(G[S\cup P\cup Q])\cup P\cup Q$, and so
$\sum_{v\in S}ch_4(v)\leq e(G[S\cup P\cup Q])+1$.
We claim that
 \begin{equation}\label{19}
\mbox { $ch_4(v)\geq 3$ for every $v\in S$.
}
\end{equation}
Indeed, if $v\in S''$, then already $ch_2(v)\geq 3$. If $v\in S'$, let $v'$ be the unique neighbor of $v$ in $Q$.  Now, if $v$ has no neighbors in $S$, then by~\eqref{20} and Claim~\ref{spq}, $v$ gets at least $1$ from $v'$ on Step 4, thus $ch_4(v)\ge 2+1=3.$ Finally, if  $v$ has  neighbors in $S$, then since $d(v)\leq 4$, there is only one such neighbor, say $v''$.
In this case, $v$ may receive only $1/2$ from $v'$, but it also gets $1/2$ from the edge $vv''$ on Step 3. This proves~\eqref{19}.

Since~\eqref{19} implies $e(G[S\cup P\cup Q])\geq 3s'-1$,  the claim holds.\end{proof}

\medskip
Now by Claim~\ref{c3}, \eqref{15'} and \eqref{12'},
$$e(G)\geq (3s'-1)+(s'+p+q)=4s'-1 +p+q\geq 4s'+1+2r'>5s'
$$
This contradiction finishes the proof.

\section{Helpful theorems}\label{help}

\begin{thm}\cite{HajnalSzemeredi1970}\label{equitable}
    Every graph with maximum degree at most $\ell$ has an equitable $(\ell+1)$-coloring.
\end{thm}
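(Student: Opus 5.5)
We would follow the short argument of Kierstead and Kostochka~\cite{KiersteadKostochka2008}, which uses the auxiliary digraph defined in Section~2. We argue by induction on $e(G)$; the edgeless case is trivial. Let $xy$ be an edge and, by induction, take an equitable $(\ell+1)$-coloring of $G-xy$. If $x$ and $y$ receive different colors, we are done. Otherwise, $x$ has at most $\ell-1$ neighbors other than $y$, so some class $W$ other than its own contains no neighbor of $x$. Moving $x$ to $W$ gives a proper coloring of $G$ that is \emph{nearly equitable}: all classes have size $m$, except one class $V^-$ of size $m-1$ and one class $V^+$ of size $m+1$. The whole proof therefore reduces to the following lemma: every nearly equitable $(\ell+1)$-coloring of a graph with $\Delta\le \ell$ can be converted into an equitable one. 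We prove this lemma by induction on the number of classes $\cB$ that are not accessible.

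Form the auxiliary digraph $D$ with target class $V^-$. If $V^+$ is accessible, we move witnesses along a shortest $V^+,V^-$-path. Each class on the path loses one vertex and gains one, $V^+$ shrinks and $V^-$ grows, and the coloring becomes equitable. So assume $V^+\in\cB$, and put $a=|\cA|$, $b=|\cB|$, so $a+b=\ell+1$, $|A|=am-1$ and $|B|=bm+1$. By the definition of accessibility, every $y\in B$ has a neighbor in every class of $\cA$, so $e(A,B)\ge a(bm+1)$.

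Next, call an accessible class $W$ \emph{terminal} if every other accessible class still has an accessible path to $V^-$ that avoids $W$; such a class exists, for example a leaf of a BFS tree toward $V^-$. The key counting step is to find, in a terminal class $W$, a vertex $w$ that has a solo neighbor $y\in B$ and only few solo neighbors. Once we have such $w$ and $y$, we swap: $y$ moves into $W-w$, and $w$ either becomes movable into some class of $\cB$ or into an accessible class. In the first case this enlarges $\cA$. In the second case we pass to the graph induced by $B\cup\{w\}$ with the $b+1$ classes $\cB\cup\{\{w\}\}$, and there the inductive hypothesis, which needs fewer inaccessible classes, applies. The bound $\Delta\le\ell$ enters here. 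Each $w\in A$ has at most $\ell=a+b-1$ neighbors. Each vertex of $B$ contributes at least $a$ edges to $A$, and it contributes at least $b$ fewer "non-solo" incidences unless it has solo edges. Comparing $a(bm+1)$ against the at most $(am-1)\ell$ available incidences, using the bound $b\ge1$, forces a terminal class to contain a vertex $w$ that has no neighbors in some class of $\cB$, or has exactly one solo neighbor.

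The main obstacle is exactly this counting step. Specifically, we must show that some vertex $w$ in a terminal class has a solo neighbor $y$ while having no other solo neighbors, or has a non-neighbor class in $\cB$, so that the swap $y\leftrightarrow w$ preserves accessibility of all the other accessible classes. We would try the Kierstead--Kostochka split of $A$ into "solo" vertices and the rest, together with the observation that $W-w+y$ remains independent precisely when $y$ is a solo neighbor of $w$. Combining this with the degree bound $\ell$ should give the needed strict inequality: a double count of edges between the terminal classes and $B$ contradicts $e(A,B)\ge a(bm+1)$ unless a good pair $(w,y)$ exists. After that, the remaining cases follow by the inductions on $e(G)$ and on $b$ described above.
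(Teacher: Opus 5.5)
The paper does not prove this statement. It is quoted from Hajnal and Szemer\'edi and used only as a black box in the proof of Theorem~\ref{PQPQR}. Judged on its own, your outline is correct on the reduction to a nearly equitable colouring and on the shift along an accessible $V^+,V^-$-path (after first padding $G$ with isolated vertices so that $\ell+1$ divides $n$). The step that carries the whole proof, however, is missing.

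Comparing $a(bm+1)$ with $(am-1)\ell$ gives nothing. The latter exceeds the former by $a(a-1)m-2a-b+1$, which is positive once $a\ge 2$ and $m$ is moderately large. The properties you aim for (a non-neighbour class in $\cB$, or exactly one solo neighbour) are also not what a swap needs. What is needed is a solo vertex $w$ of a terminal class $W$ that is movable to some accessible class $X\neq W$.

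The count that actually produces such a $w$ works one terminal class at a time. Let $\sigma_z$ be the number of solo neighbours of $z$. Then $|B|\le\sum_{z\in W}\frac12\bigl(d_B(z)+\sigma_z\bigr)$. If no solo vertex of $W$ is movable inside $\cA$, each solo $z$ has $d_B(z)\le \ell-(a-1)=b$. This forces $|B|\le bm$, a contradiction, provided every non-solo vertex also has $d_B(z)\le 2b$. That is guaranteed only when $\ell\le 2b$. For $a\ge b+2$, a non-solo vertex of $W$ may have up to $\ell$ neighbours in $B$, and no contradiction arises. This range needs a further idea; your proposal supplies none and only asserts that the inequality ``should'' come out.

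The inductive step around the swap is also not right. Given a good pair $(w,y)$, the correct step is as follows.
\begin{itemize}
\item Move $y$ into $W-w$ and $w$ into $X$.
\item Shift witnesses along an $X,V^-$-path in $D-W$; this is where terminality is used. Every accessible class now has size $m$.
\item Each vertex of $B$ has a neighbour in all $a$ accessible classes, so $\Delta(G[B-y])\le b-1$.
\item The classes of $\cB$ restricted to $B-y$ form an equitable or nearly equitable $b$-colouring with at most $b-1$ inaccessible classes.
\item Apply the induction, stated for all $\ell$ at once, to $G[B-y]$.
\end{itemize}
Your version instead applies the induction to $G[B\cup\{w\}]$ with classes $\cB\cup\{\{w\}\}$. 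That is not a nearly equitable colouring, so the hypothesis does not apply.

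Your other branch moves $w$ into a class of $\cB$. This either unbalances two class sizes, or, if it is a swap into $y$'s own class, need not enlarge $\cA$ at all. In that branch $w$ is not movable within $\cA$, so it has a neighbour in every other accessible class, and it is also adjacent to $y\in W-w+y$. Hence no new arc into $\cA$ arises through $w$.
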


\begin{thm}\label{PQPQR}

Let $2\le \ell\le 3$ and $p_1\ge p_2\ge \cdots\ge p_\ell\ge 2$. Define $c=\max\{ i |\ p_i=p_1, 1\le i\le \ell\}-1$. 

(1) If there is an integer $n_3\ge p_1+p_2+\cdots+p_\ell$ such that $K_c\vee T_{n_3-c,p_1-1}\in \EX(n_3, K_{p_1}\cup\cdots\cup K_{p_\ell})$, then $K_c\vee T_{n-c,p_1-1}\in \EX(n, K_{p_1}\cup\cdots\cup K_{p_\ell})$ for all $n\ge n_3$;

(2) If there is an integer $n_4\ge p_1+p_2+\cdots+p_\ell$ such that $\EX(n_4, K_{p_1}\cup\cdots\cup K_{p_\ell})=\{K_c\vee T_{n_4-c,p_1-1}\}$, then $\EX(n, K_{p_1}\cup\cdots\cup K_{p_\ell})=\{K_c\vee T_{n-c,p_1-1}\}$ for all $n\ge n_4$.
\end{thm}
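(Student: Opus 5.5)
Both parts go by induction on $n$, passing from $n$ to $n+1$ by deleting a vertex of maximum degree. Write $H=K_{p_1}\cup\cdots\cup K_{p_\ell}$ and $F_n=K_c\vee T_{n-c,p_1-1}$. First we check that $F_n$ is $H$-free for every $n$. In $F_n$ every clique meets the Tur\'an part in at most $p_1-1$ vertices. So each copy of $K_{p_1}$ (indeed each $K_{p_i}$ with $p_i=p_1$) must use a vertex of $K_c$. There are $c+1$ cliques of size $p_1$ in $H$, but only $c$ vertices in $K_c$, so $F_n$ contains no $H$.

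Next we compute how many edges a new vertex adds. Going from $F_n$ to $F_{n+1}$ means adding a vertex to a smallest part of the Tur\'an graph. Hence
$$e(F_{n+1})-e(F_n)=n-\lfloor (n-c)/(p_1-1)\rfloor,$$
and this equals the minimum degree of $F_{n+1}$. The other fact we need is that $\Delta(F_n)\ge \delta(F_{n+1})$ fails only trivially: the vertices of $K_c$ have degree $n-1$.

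For part (1), suppose $\ex(n,H)=e(F_n)$ and let $G$ be an $H$-free graph on $n+1$ vertices. We want $e(G)\le e(F_{n+1})$. Pick a vertex $v$. The graph $G-v$ is $H$-free, so
$$e(G)\le e(F_n)+d(v).$$
The task is therefore to find some $v$ with $d(v)\le \delta(F_{n+1})=n-\lceil (n+1-c)/(p_1-1)\rceil+\dots$. In other words, we must rule out that $\delta(G)$ exceeds this threshold. This is the main obstacle.

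The plan for that obstacle is as follows. Suppose every vertex of $G$ has degree above the threshold. Then $G$ has more than $t_{n+1,p_1-1}$ edges, so by Theorem~\ref{Turan} it contains a $K_{p_1}$. The high minimum degree should then let us greedily build the whole $H$. The first $c+1$ copies of $K_{p_1}$ come from the high-degree vertices, or alternatively from repeated use of Tur\'an's theorem after removing $c+1$ vertices. More precisely, fix a copy $K$ of $K_{p_1}$, delete its $p_1$ vertices, and apply the induction hypothesis, i.e. $H'$-freeness where $H'$ is $H$ minus one clique (there $\ell-1\le 2$ cliques remain, and the known results, Theorem~\ref{Turan} and Theorem~\ref{cite1}, apply). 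Comparing edge counts, the deleted clique carries at most $p_1(n-p_1)+\binom{p_1}{2}$ edges. I would rather use the degree condition: each vertex misses at most $\lfloor (n+1-c)/(p_1-1)\rfloor$ vertices. Then a counting argument in the style of Hajnal--Szemer\'edi (Theorem~\ref{equitable}), applied to the complement, which has maximum degree $\le \lfloor (n+1-c)/(p_1-1)\rfloor$, gives an equitable coloring of $\overline G$. Its color classes are cliques of $G$ of almost equal sizes, and we can choose these classes as the cliques of $H$. Setting the parameters correctly is where the care is needed, and I expect it to use $n\ge p_1+\cdots+p_\ell$.

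For part (2), we repeat the argument and track equality. If $e(G)=e(F_{n+1})$, then the deleted vertex $v$ has degree exactly $\delta(F_{n+1})$, and $G-v$ is extremal, so $G-v=F_n$ by hypothesis. Then $v$ must be adjacent to exactly $\delta(F_{n+1})$ vertices of $F_n$ without creating $H$. We show that $v$ cannot miss a vertex of $K_c$, nor vertices from two different Tur\'an parts, since either would produce an extra $K_{p_1}$ disjoint from the others and hence a copy of $H$. So $v$ misses exactly one smallest part and $G=F_{n+1}$.
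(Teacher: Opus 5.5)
Your route is the paper's: delete a vertex of \emph{minimum} degree (not maximum, as your first sentence says), use Hajnal--Szemer\'edi on $\overline G$ to show $\delta(G)\le\delta(F_{n+1})$, and for uniqueness analyse how $v$ attaches to $G-v=F_n$. The gap is that the step you flag as the main obstacle is never carried out, and the mechanism you sketch for it fails in two places.

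First, the degree bound is wrong. If $\delta(G)\ge\delta(F_{n+1})+1$, then every vertex misses at most $m-1$ others, where $m=\lfloor (n-c)/(p_1-1)\rfloor$. Your bound $\lfloor (n+1-c)/(p_1-1)\rfloor$ is too weak, and the sharp value is needed. With an equitable $m$-colouring of $\overline G$, if at most $c$ classes had size $\ge p_1$, then $n+1\le cp_1+(m-c)(p_1-1)=c+m(p_1-1)\le n$, a contradiction. With your bound you get $\lfloor (n+1-c)/(p_1-1)\rfloor+1$ classes and no contradiction.

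Second, and more seriously, even with the right $m$ the colour classes can serve as the $\ell$ cliques of $H$ only when $m\ge\ell$. Since $p_{c+2},\dots,p_\ell$ may be much smaller than $p_1-1$, the range $p_1+\cdots+p_\ell\le n+1\le \ell(p_1-1)+c$ is in general nonempty, even inside the induction step, and there $m<\ell$. Splitting classes does not rescue this, and using more colours only makes the classes smaller. For example, take $(p_1,p_2,p_3)=(6,6,3)$, so $c=1$, and let $G$ be $K_{16}$ minus a perfect matching. Then $\delta(G)=14>13=\delta(F_{16})$ and $m=2$. The two classes have size $8<9=p_2+p_3$, so they cannot host the three cliques, yet $G$ does contain $K_6\cup K_6\cup K_3$.

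This regime needs its own argument. In the paper (Case 1 of its minimum-degree claim) one observes that here $\Delta(\overline G)\le m-1\le\ell-2\le 1$. So $\overline G$ is a matching plus isolated vertices, and disjoint independent sets of sizes $p_1,\dots,p_\ell$ are chosen greedily, each time meeting as many edges of $\overline G$ as possible. This is where $n\ge p_1+\cdots+p_\ell$ is genuinely used.

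Part (2) is right in outline, but the mechanism should be stated directly. If $v$ has a neighbour in every Tur\'an part, then $v$ plus one such neighbour per part is a $K_{p_1}$. The rest of $F_n$ is $K_c\vee T_{n-c-(p_1-1),p_1-1}$, which still contains $K_{p_2}\cup\cdots\cup K_{p_\ell}$. Hence $v$ misses a whole part. Since $v$ misses exactly $m$ vertices and every part has at least $m$, it misses precisely one smallest part and nothing else. This also covers the case missing from your list, where $v$ misses $m$ vertices of a part of size $m+1$.
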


\begin{proof}  Note that $n_3\le n_4$. We use induction on $n$. Suppose Part 1) holds for all $n_3\leq n'<n$ and let $G\in \EX(n, K_{p_1}\cup\cdots\cup K_{p_\ell})$. If $G$ is $K_{p_1}$-free, then $c=0$, and by Theorem~\ref{Turan}, $G=T_{n,p_1-1}$. Thus, we are done. Assume that $G$ contains an induced $K_{p_1}$.

    \begin{Claim}\label{claim7.1}
    $\delta(G)\le n-1-\lfloor\frac{n-c-1}{p_1-1}\rfloor$.
    \end{Claim}
    
    \begin{proof} If $\delta(G)\ge n-\lfloor\frac{n-c-1}{p_1-1}\rfloor$, then $\Delta(\ol{G})\le \lfloor\frac{n-c-1}{p_1-1}\rfloor-1$.

   {\bf Case 1:} $n\le\ell(p_1-1)+c$. Then $\Delta(\ol{G})\le\ell-2\le 1$. Let $\ol{G}=x_1K_2\cup y_1K_1$. We show:
    \begin{equation}\label{7.2}
    \parbox{0.9\linewidth}{\em For $1\le k\le\ell-1$, if $\ol{G}$ contains disjoint induced $\ol{K}_{p_1},\ldots,\ol{K}_{p_{k}}$, then $\ol{G}$ contains disjoint induced $\ol{K}_{p_1},\ldots,\ol{K}_{p_{k}},\ol{K}_{p_{k+1}}$.}
    \end{equation}
    Take pairwise disjoint independent sets $S_1,\ldots,S_k$ with $|S_i|=p_i$ for each $1\le i\le k$ such that the number of edges of $\ol{G}$ incident with $S_1\cup\cdots\cup S_k$ is as large as possible. If $x_1\le p_1+\cdots+p_k$, then $\alpha(\ol{G}-S_1-\cdots-S_k)=n-p_1-\cdots-p_k\ge p_{k+1}$. If $x_1> p_1+\cdots+p_k$, then $\alpha(\ol{G}-S_1-\cdots-S_k)\ge p_1+\cdots+p_k\ge p_{k+1}$. Thus $\alpha(\ol{G}-S_1-\cdots-S_k)\ge p_{k+1}$. Then there is an independent set $S_{k+1}$ of size $p_{k+1}$. This proves~\eqref{7.2}.
    It follows that $\ol{G}$ contains disjoint induced $\ol{K}_{p_1},\ldots,\ol{K}_{p_\ell}$. Thus $G$ contains $K_{p_1}\cup\cdots\cup K_{p_\ell}$, a contradiction.

   {\bf Case 2:}    $n\ge \ell(p_1-1)+c+1$. Then $\lfloor\frac{n-c-1}{p_1-1}\rfloor\ge\ell$. By Theorem~\ref{equitable}, $\ol{G}$ has an equitable coloring with color classes $V_1,V_2,\dots,V_{\lfloor\frac{n-c-1}{p_1-1}\rfloor}$, where $|V_1|\ge |V_2|\ge \dots\ge |V_{\lfloor\frac{n-c-1}{p_1-1}\rfloor}|$. Since $G$ is $K_{p_1}\cup\cdots\cup K_{p_\ell}$-free, $|V_{c+1}|\le p_1-1$. Then $|V_j|\le p_1$ for $1\le j\le c$ and $|V_j|\le p_1-1$ for $c+1\le j\le \lfloor\frac{n-c-1}{p_1-1}\rfloor$. Hence $$n=\sum_{j=1}^{\lfloor\frac{n-c-1}{p_1-1}\rfloor} |V_j|\le c p_1+(\lfloor\frac{n-c-1}{p_1-1}\rfloor-c)(p_1-1)<n,$$ a contradiction.\end{proof}

    By Claim~\ref{claim7.1}, there is a vertex $v\in V(G)$ with $d(v)=\delta(G)\le n-1-\lfloor\frac{n-c-1}{p_1-1}\rfloor$. Since $e(G)\ge e(K_c\vee T_{n-c,p_1-1})$ and $$e(G)=e(G-v)+d(v)\le e(K_c\vee T_{n-c-1,p_1-1})+n-1-\lfloor\frac{n-c-1}{p_1-1}\rfloor=e(K_c\vee T_{n-c,p_1-1}),$$ we conclude that $e(G)=e(K_c\vee T_{n-c,p_1-1})$, $d(v)=n-1-\lfloor\frac{n-c-1}{p_1-1}\rfloor$ and $e(G-v)=e(K_c\vee T_{n-c-1,p_1-1})$. This proves Part 1).
    
    Suppose now that Part 2) holds for all $n_4\leq n'<n$.
    Then $G-v=K_c\vee T_{n-c-1,p_1-1}$ and hence $V(G)-v$ can be divided into $p_1$ subsets $V',V_1,V_2,\ldots,V_{p_1-1}$ so that $G[V']=K_c$, each of $V_1,\ldots,V_{p_1-1}$ is an independent set and 
    $\lceil\frac{n-c-1}{p_1-1}\rceil\geq |V_1|\geq |V_2|\geq \ldots\geq |V_{p_1-1}|\geq\lfloor\frac{n-c-1}{p_1-1}\rfloor.$
    
    If $N(v)\cap V_j\not=\emptyset$ for every $1\leq j\leq p_1-1$, let $v_j\in N(v)\cap V_j$, $U_1=\{v,v_1,v_2,\ldots,v_{p_1-1}\}$, and $G'=G-U_1$. Then $G[U_1]=K_{p_1}$ and $G'=K_c\vee T_{n-p_1-c,p_1-1}$. Since $n-p_1\geq p_2+\cdots+p_\ell$, $G'$ contains $K_{p_2}\cup\cdots\cup K_{p_\ell}$.
    Thus $G$ contains a copy of $K_{p_1}\cup\cdots\cup K_{p_\ell}$, a contradiction. 

    Hence there is an index $j_0\in\{1,\dots,p_1-1\}$ such that $N(v)\cap V_{j_0}=\emptyset$. Since $$n-1-\left\lfloor\frac{n-c-1}{p_1-1}\right\rfloor=d(v)\le n-1-|V_{j_0}|\le n-1-\left\lfloor\frac{n-c-1}{p_1-1}\right\rfloor,$$ $N(v)=V(G)\setminus(V_{j_0}\cup\{v\})$, $|V_{j_0}|=\lfloor\frac{n-c-1}{p_1-1}\rfloor$ and $G=K_c\vee T_{n-c,p_1-1}$. This proves Part 2). 
\end{proof}

\begin{proof}[\bf Proof of Theorem~\ref{thm11}] Let $G\in\CEX(p+q+r,K_p\cup K_q\cup K_r)$. Observe that { for every  $t\geq 1$, each independent set of  size $t$ in $K_4\cup\ol{K}_{p+q+r-4}$ contains at least $t-1$ vertices from $\ol{K}_{p+q+r-4}$}. Since $(p-1)+(q-1)+(r-1)-(p+q+r-4)=1$, $K_4\cup\ol{K}_{p+q+r-4}$ contains no three disjoint independent sets of sizes $p,q$ and $r$. By definition, $\ol{T}_{n,p-1}$ has no induced $\ol{K}_{p}$. Thus $e(G)\le\min\{6, \ol{t}_{n,p-1}\}$.

\begin{Claim}\label{7.1}
    If $G$ contains an induced $\ol{K}_p$ and $e(G)\le 6$, then one of the following holds:

    1) $G=K_4\cup\ol{K}_{n-4}$;
    
    2) $(p,q,r)=(3,2,2)$ and $G=K_{1,6}$.
\end{Claim}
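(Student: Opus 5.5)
The plan is to reduce the claim to Theorems~\ref{PQ} and~\ref{PQR}, and then to handle the one remaining edge count by hand. As in the surrounding proof, $G$ is assumed to have no three vertex-disjoint independent sets of sizes $p,q,r$. Since $n=p+q+r$, such sets would form a partition of $V(G)$.

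\textbf{Step 1: find disjoint $\ol{K}_p$ and $\ol{K}_q$.} Apply Theorem~\ref{PQ} to $G$ with the parameters $(p,q)$. Here $s=n-p-q+1=r+1\ge 3$, so $3s\ge 9>6\ge e(G)$. Outcomes (\ref{PQ:cliques}) and (\ref{PQ:star}) require $e(G)=3s$, so they cannot occur. Hence $G$ contains vertex-disjoint induced $\ol{K}_p$ and $\ol{K}_q$.

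\textbf{Step 2: the case $e(G)\le 5$.} Now apply Theorem~\ref{PQR}, where $s=1$.
\begin{itemize}
\item If $e(G)\le 5$, outcome (\ref{PQR:induced}) is excluded by the standing assumption on $G$.
\item Outcome (\ref{PQR:cliques}) needs $10x+15y=5$, which has no nonnegative integer solution.
\item Outcome (\ref{PQR:star}) needs $p=q=r$ with $G=K_{1,5}$ on $6$ vertices, i.e.\ $(p,q,r)=(2,2,2)$, which is excluded.
\end{itemize}
Therefore $e(G)=6$.

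\textbf{Step 3: edge-criticality.} For every edge $e$, the graph $G-e$ still contains the same disjoint $P,Q$ and has $5$ edges. By Step~2 applied to $G-e$, it has a partition $V(G)=P_e\cup Q_e\cup R_e$ into independent sets of sizes $p,q,r$. Since $G$ has no such partition, $e$ must lie inside one of $P_e,Q_e,R_e$. So $G$ is a $6$-edge graph with no partition into independent sets of sizes exactly $p,q,r$, and deleting any single edge creates one.

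\textbf{Step 4: identify $G$.} I would split on whether $G$ contains $K_4$.
\begin{itemize}
\item If $G\supseteq K_4$, then $G$ is $K_4\cup\ol{K}_{n-4}$, since all $6$ edges are used.
\item Otherwise, take a vertex $v$ of maximum degree $d$ and look at $G-v$. It has at most $6-d$ edges and $n-1$ vertices.
\end{itemize}
In the second branch I would try to construct the partition greedily. Place $v$ in the largest class, $P$, together with $p-1$ non-neighbours of $v$. Then split the remaining $q+r$ vertices, which span few edges, into independent sets of sizes $q$ and $r$, using that $\Delta$ is small there. When $d\le 2$, every component of $G$ is a path, a cycle, or one of a few small trees, and an explicit $3$-colouring with the prescribed class sizes can be written down. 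When $d\ge 3$, only $\le 3$ edges avoid $v$, and $v$ needs $p-1$ non-neighbours.

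\textbf{Main obstacle.} The genuine difficulty is this final case analysis. The sizes can be tight, for instance $n=7$ with $(p,q,r)=(3,2,2)$, and there a star $K_{1,6}$ really is an exception: its centre has no non-neighbour, so it cannot share a class, but every class has size $\ge 2$. I would organise the analysis by $d$:
\begin{itemize}
\item $d=6$ forces $G=K_{1,6}$, hence $n=7$ and $(p,q,r)=(3,2,2)$. This is outcome~2).
\item $3\le d\le 5$: choose the non-neighbours of $v$ for $P$ so that $P$ absorbs endpoints of the remaining edges.
\item $d\le 2$: handle by direct colouring.
\end{itemize}
In each subcase the target is a contradiction with Step~3, unless $G$ is one of the two graphs listed in the claim.
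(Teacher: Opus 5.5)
\textbf{There is a genuine gap: the case $e(G)=6$, which is the whole content of the claim, is not actually proved.}

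Your Steps 1 and 2 are correct and coincide with the opening of the paper's proof. Your explicit check that outcomes (\ref{PQR:cliques}) and (\ref{PQR:star}) of Theorem~\ref{PQR} cannot occur when $s=1$ is a welcome addition. Step 3 is never used afterwards.

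Step 4 is a plan rather than an argument. Concretely:
(i) For $d=6$ you only get $G=K_{1,6}\cup\ol K_{n-7}$, not $G=K_{1,6}$. You still have to show that $n\ge 8$ yields a partition. It does: $n\ge 8$ forces $p+q\ge 6$, so the centre together with $r-1$ isolated vertices can form the $r$-class.
(ii) For $3\le d\le 5$, your rule "put $v$ into $P$ with $p-1$ non-neighbours" is impossible whenever $d>q+r$. For example, with $(p,q,r)=(3,2,2)$ and $d=5$, $v$ has a single non-neighbour. Even when $P$ can be formed, splitting the remaining $q+r$ vertices into independent sets of sizes exactly $q$ and $r$ is asserted, not proved.
(iii) For $d\le 2$, the existence of a proper $3$-colouring with the prescribed, possibly very unbalanced, class sizes $p\ge q\ge r$ is likewise only asserted.

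The missing ingredient is the reduction the paper uses. If an independent set of one of the prescribed sizes meets at least four edges, the rest of the graph has at most two edges. It therefore contains an independent set of the larger remaining size, and Theorem~\ref{PQ} with $s=1$ splits it into the two required classes. For instance, when $d(v)\in\{4,5\}$, put $v$ together with $r-1$ independent non-neighbours into the $r$-class. This is possible because the non-neighbourhood has $n-1-d$ vertices and at most $6-d$ edges. The remaining $p+q$ vertices then span at most $2$ edges, so they contain an independent $p$-set, and Theorem~\ref{PQ} finishes.

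After this reduction, the situation $\Delta(G)\le 3$ without $K_4$ is still left, and it needs a real argument. The paper avoids a degree-based split altogether. It fixes disjoint $P,Q$ minimising $e(G-P-Q)$ and then maximising $e[P,V(G)\setminus P]$. It applies Theorem~\ref{PQ} to $G-P$ or $G-Q$ when $P$ or $Q$ meets at least four edges. Otherwise it runs the switching analysis based on \eqref{03}, which is exactly where $K_4\cup\ol K_{n-4}$ emerges.
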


\begin{proof}
    By $e(G)\le 6<9\leq 3(s+r)$ and Theorem~\ref{PQ}, $G$ contains disjoint induced $\ol{K}_{p}$ and $\ol{K}_{q}$. If $e(G)\le 5$, then by Theorem~\ref{PQR}, $G$ contains disjoint induced $\ol{K}_{p},\ol{K}_{q}$ and $\ol{K}_{r}$, a contradiction. Suppose $e(G)=6$. 
    
    If $\Delta(G)=6$, then $G=K_{1,6}\cup \ol{K}_{p+q+r-7}$. If $p+q+r-7\not= 0$, then $K_{1,6}\cup\ol{K}_{p+q+r-7}$ contains disjoint induced $\ol{K}_{p},\ol{K}_{q}$ and $\ol{K}_{r}$, a contradiction. If $p+q+r-7=0$, then by $p\ge q\ge r\ge2$, $(p,q,r)=(3,2,2)$. So, in this case, Part 2) holds.

    If $\Delta(G)\le 5$, let disjoint sets $P,Q\subseteq V(G)$ be such that $G[P]=\ol{K}_p$ and $G[Q]=\ol{K}_q$. Choose $P$ and $Q$ so that \begin{enumerate}[label=\textup{(M\arabic*)}, ref=\textup{M\arabic*}]
    \item \label{PQR3} {\it $G-P-Q$ has the fewest edges,}
    \item \label{PQR4} {\it modulo~\eqref{PQR3}, $e[P,V(G)\setminus P]$ is maximum,}
    \end{enumerate}
    If $|E(P,V(G)\setminus P)|\ge 4$ or $|E(Q,V(G)\setminus Q)|\ge 4$, then $e(G-P)\le 2$ or  $e(G-Q)\le 2$. { By Theorem~\ref{PQ},} in these cases $G-P$ contains disjoint induced $\ol{K}_{q}$ and $\ol{K}_{r}$ or $G-Q$ contains disjoint induced $\ol{K}_{p}$ and $\ol{K}_{r}$, a contradiction.

    Suppose $|E(P,V(G)\setminus P)|\le 3$ and $|E(Q,V(G)\setminus Q)|\le 3$. If $G-P-Q$ has no edges, then $G$ contains disjoint induced $\ol{K}_{p},\ol{K}_{q}$ and $\ol{K}_{r}$, a contradiction. Let $R=V(G)-P-Q$ and $xy\in E(G)$ for some $x,y\in R$. Similarly to~\eqref{01}, we have the following properties.
    \begin{equation}\label{03} \parbox{0.9\linewidth}{\it (A) If $x$ (or $y$) has no neighbors  in some $J\in \{P,Q\}$, then each $w\in J$ has a neighbor in $R$; and\\
    (B) If $x$ (or $y$) has exactly one neighbor $w$  in some $J\in \{P,Q\}$, then $w$ has another neighbor in $R$; and\\
    (C) For each $J\in \{P,Q\}$, $e[J,R]\geq 2$ with equality only if $|J|=2$ or some $w\in J$ is adjacent to both $x$ and $y$.}
    \end{equation}
    Recall that $e(G)=6$. Since $p\ge q\ge r\ge 2$ and $(p,q,r)\not=(2,2,2)$, we have $p\ge 3$. { By~\eqref{03}(C), $e(G[R])\le 2$. If $e(G[R])=2$, then $e[P,R]=2$ and $e[Q,R]=2$. Also by~\eqref{03}(C), there is a vertex $w_r\in R-\{x,y\}$ with degree one in $G[R]$ that has no neighbor in $P$. We can move $w_r$ to $P$ and an isolated vertex in $P$ to $R$, contradicting~\eqref{PQR3}. Thus $e(G[R])=1$.} 
    
    If $e[P,Q]=0$, then there is a set $J\in \{P,Q\}$ such that $e[J,R]=3$. If $e[P,R]=3$, then $e[Q,R]=2$. If $|Q|=2$, let $Q=\{w_{q_1},w_{q_2}\}$ and $xw_{q_1},xw_{q_2}\in E(G)$. Since $\Delta(G)\le 5$, either there is a vertex $w_p\in P$ such that $N(w_p)=\emptyset$ or $|P|=3$ and each vertex in $P$ is adjacent to $y$. If the former holds, then we move $w_p$ to $Q$ and $w_{q_1}$ to $P$, obtaining a new partition which increases $e[P,V(G)\setminus P]$, contradicting~\eqref{PQR4}. If the latter holds, let $P=\{w_{p_1},w_{p_2},w_{p_3}\}$. Then the sets $P'=\{w_{p_1},w_{p_2},x\}$, $Q'=\{w_{q_1},w_{p_3}\}$ and $R'=\{w_{q_2},y\}$ are independent, a contradiction. If there is a vertex $w_q\in Q$ such that $xw_q, yw_q\in E(G)$, then by $e[P,R]=3$, there is a vertex $w_p'\in P$ such that $d(w_p')\le 1$. Thus we can switch $w_p'$ and $w_q$ and increase $e[P,V(G)\setminus P]$, contradicting~\eqref{PQR4}.

    If $e[Q,R]=3$, then $e[P,R]=2$ and there is a vertex $z_q\in Q$ such that $|N(z_q)|\ge 1$. Since $p\ge 3$, there is a vertex $z_p\in P$ such that $N(z_p)=\emptyset$. We can switch $z_p$ and $z_q$ and increase $e[P,V(G)\setminus P]$, contradicting~\eqref{PQR4}.

    If $e[P,Q]=1$, then by~\eqref{03}(C), we have $e[J,R]=2$ for each $J\in \{P,Q\}$. Since $|P|\ge 3$, there is a vertex $u_P\in P$ adjacent to both $x$ and $y$. If $|Q|=2$, then we may assume that $Q=\{u_{q_1},u_{q_2}\}$, $xu_{q_1},xu_{q_2}\in E(G)$ and $N(u_{q_1})\cap P=\emptyset$. Then we move $u_{q_1}$ to $P$ and an isolated vertex in $P$ to $Q$, obtaining a new partition which increases $e[P,V(G)\setminus P]$, contradicting~\eqref{PQR4}.

    Suppose that there is a vertex $u_Q\in Q$ adjacent to both $x$ and $y$. If $u_Pu_Q\not\in E(G)$ and $|N(u_Q)\cap P|=0$, then we move $u_Q$ to $P$ and an isolated vertex in $P$ to $Q$, obtaining a new partition which increases $e[P,V(G)\setminus P]$, contradicting~\eqref{PQR4}. If $u_Pu_Q\not\in E(G)$ and $|N(u_Q)\cap P|=1$, then we can switch $u_Q$ with the neighbor of $u_Q$ in $P$ and increase $e[P,V(G)\setminus P]$, contradicting~\eqref{PQR4}. Thus $u_Pu_Q\in E(G)$. Hence $G=K_4\cup\ol{K}_{n-4}$. 
\end{proof}

If $G$ contains no induced $\ol{K}_p$, then by Theorem~\ref{Turan}, $G=\ol{T}_{n,p-1}$. If $G$ contains an induced $\ol{K}_p$, then by Claim~\ref{7.1}, $G=K_4\cup\ol{K}_{n-4}$ or $(p,q,r)=(3,2,2)$ and $G=K_{1,6}$. Note that $\ol{t}_{n,p-1}=6$ for $(p,q,r)=(5,2,2)$ or $p\ge 7$ and $(q,r)=(3,2)$, $\ol{t}_{n,p-1}=5$ for $p\ge 6$ and $q=r=2$, and $\ol{t}_{n,p-1}>6$ otherwise. Then we are done.\end{proof}

\section{Proof of Theorem~\ref{pqr2}}\label{main}

Recall that { $$c=c_q+c_r,$$ where $p\ge q\ge r$, $c_q=\left\{\begin{array}{lll} 0, & \mbox{if $p>q$,}\\ 
1,  & \mbox{if $p=q$,}\end{array}\right.$ and $c_r=\left\{\begin{array}{lll} 0, & \mbox{if $p>r$,}\\ 
1,  & \mbox{if $p=r$.}\end{array}\right.$}

\begin{lemma}\label{lemma8.1}
    For any $z\leq p-1$, every graph in ${\KK_{3,4}(n,z)}\cup\{K_{3,4}^*(n)\}$  contains no disjoint induced $\ol{K}_p$ and $\ol{K}_q$.
\end{lemma}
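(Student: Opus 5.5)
The plan is to count how many vertices two disjoint independent sets can take from each component, using only that an independent set meets each clique in at most one vertex. Let $P',Q'$ be disjoint independent sets. We want to show that $|P'|\ge p$ and $|Q'|\ge q$ cannot both hold. It suffices to bound $|P'|+|Q'|$ componentwise, with a small refinement for the star in $K_{3,4}^*(n)$.

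\emph{The family $\KK_{3,4}(n,z)$.} Let $G=K_{3,4}(n,x,y)$ with $x+2y=n-p-q+1$. Each $K_3$ or $K_4$ component contains at most one vertex of $P'$ and at most one vertex of $Q'$, so it contributes at most $2$ to $|P'|+|Q'|$. Each of the $n-3x-4y$ isolated vertices contributes at most $1$. Hence
$$|P'|+|Q'|\le 2(x+y)+n-3x-4y=n-(x+2y)=p+q-1<p+q.$$
So $G$ has no disjoint induced $\ol{K}_p$ and $\ol{K}_q$. The condition $x+y\le z\le p-1$ is not needed here; it only restricts which graphs are in the family.

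\emph{The graph $K_{3,4}^*(n)$.} Here $p=q$, and there are $a=2p+2q-4-n$ copies of $K_3$ and $b=n-\frac{3(p+q)}{2}+2$ copies of $K_4$, plus one $K_{1,3}$. First verify that $3a+4b+4=n$. Then compute $a+b=4p-4-3p+2=p-2$. Each clique again contributes at most one vertex to each of $P'$ and $Q'$.

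The star needs a separate argument, because the crude bound (center in one set, three leaves in the other) gives $|P'|+|Q'|\le 2(a+b)+4=2p$, which is not quite enough. Split into two cases.
\begin{itemize}
\item If the center of $K_{1,3}$ lies in, say, $P'$, then $P'$ contains no leaf. So $|P'|\le (a+b)+1=p-1<p$.
\item If neither set contains the center, the star contributes at most its $3$ leaves in total. So $|P'|+|Q'|\le 2(a+b)+3=2p-1<p+q$.
\end{itemize}
In both cases the required pair of sets does not exist.

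The main obstacle is exactly this tightness at the star in $K_{3,4}^*(n)$, where the plain counting bound gives $2p$ and the case split on the center is what closes the gap. Everything else is routine arithmetic on the component counts.
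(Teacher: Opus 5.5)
Your proof is correct and uses essentially the same counting as the paper. For $K_{3,4}(n,x,y)$, your bound $|P'|+|Q'|\le 2(x+y)+(n-3x-4y)=p+q-1$ rearranges the paper's observation that the two sets together would need $(p-x-y)+(q-x-y)$ isolated vertices, one more than exist. For $K_{3,4}^*(n)$, the paper notes that, with only $p-2$ clique components, each independent $p$-set needs two leaves of the $K_{1,3}$, and three leaves cannot supply two disjoint pairs; your case split on the center says the same thing.
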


{
\begin{proof} Let $z\leq p-1$.
    If $G\in{ \KK_{3,4}(n,z)}$, write $G=K_{3,4}(n,x,y)$, where $x+2y=n-p-q+1$, $3x+4y\le n$ and { $x+y\le z$}.     Each independent set of size $t\in \{p,q\}$ in $G$ contains at least $t-x-y$ vertices from $(n-3x-4y)K_1$. Moreover, $(p-x-y)+(q-x-y)-(n-3x-4y)=p+q+x+2y-n=1$. Thus any   independent $p$-set intersects each independent $q$-set. 

    Suppose $G=K_{3,4}^*(n)=(2p+2q-4-n)K_3\cup (n-\frac{3(p+q)}{2}+2)K_4\cup K_{1,3}$, where $p=q$. Since $K_{1,3}$ has no two disjoint induced $\ol{K}_2$, $G$ contains no disjoint induced $\ol{K}_p$ and $\ol{K}_q$.
\end{proof}}

\begin{lemma}\label{lemma8.2}
  For any $z\leq p-1$,  no graph in ${ \KK_{5,6}(n,z)}\cup\{K_{5,6}^*(n)\}$ contains disjoint induced $\ol{K}_p,\ol{K}_q$ and $\ol{K}_r$.
\end{lemma}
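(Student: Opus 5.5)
The plan is to argue by counting, as in the proof of Lemma~\ref{lemma8.1}.

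\emph{Case $G\in\KK_{5,6}(n,z)$.} Write $G=K_{5,6}(n,x,y)$ with $2x+3y=n-p-q-r+1$, $5x+6y\le n$ and $x+y\le z\le p-1$. Every clique component meets an independent set in at most one vertex. Hence an independent set of size $t\in\{p,q,r\}$ contains at least $t-x-y$ of the $n-5x-6y$ isolated vertices. Suppose disjoint independent sets of sizes $p,q,r$ existed. Then they would together use at least
\[
(p-x-y)+(q-x-y)+(r-x-y)
\]
isolated vertices (each term being replaced by $0$ if negative). When all three terms are nonnegative, subtracting the number of isolated vertices gives
\[
p+q+r-3x-3y-(n-5x-6y)=p+q+r+2x+3y-n=1>0,
\]
which is a contradiction. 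The point needing care is when some term $t-x-y$ is negative, which can happen only for the smaller sets $q$ or $r$.

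\emph{The negative terms.} Here I would refine the count using the fact that each clique contributes at most one vertex to each of the three sets, and at most three vertices in total. Let $a_t$ be the number of clique vertices used by the set of size $t$. Then $a_t\le x+y$ for each $t$, and the isolated vertices used total $p+q+r-(a_p+a_q+a_r)\ge p+q+r-3(x+y)$. This bound does not need each term to be nonnegative, since $a_t\le\min(t,x+y)$ and $\sum_t a_t\le 3(x+y)$ hold directly. So the same inequality gives the contradiction uniformly. I should check that the hypothesis $z\le p-1$ is not needed for this: indeed the count only uses that a clique gives at most one vertex per set.

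\emph{Case $G=K_{5,6}^*(n)$.} Here $p=q=r$ and $G=aK_5\cup bK_6\cup K_{1,5}$ with $a=2p+2q+2r-6-n$ and $b=n-\frac{5(p+q+r)}{3}+4$. Every vertex lies in some component, so $n=5a+6b+6$. Each clique gives at most one vertex to each of the three independent sets, so the cliques supply at most $a+b$ vertices to each set. Consequently each set must take at least $p-(a+b)$ vertices from the star $K_{1,5}$.

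Next I compute $a+b=2p+2q+2r-6-\frac{5(p+q+r)}{3}+4=\frac{p+q+r}{3}-2=p-2$, so each set takes at least $2$ vertices from $K_{1,5}$. Any independent set in $K_{1,5}$ with at least $2$ vertices avoids the center. Three disjoint such sets would therefore need at least $6$ leaves, but the star has only $5$, a contradiction.

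The main obstacle is the $\KK_{5,6}$ counting bookkeeping. It is resolved by bounding the clique contributions by $3(x+y)$ in total rather than set by set.
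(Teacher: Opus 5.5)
Your proof is correct and follows the paper's argument. The first part is the same count of isolated vertices, giving the excess $p+q+r+2x+3y-n=1$. Your worry about negative terms $t-x-y$ is unnecessary, since summing the lower bounds is valid regardless of sign, as you yourself conclude. For $K_{5,6}^*(n)$ you make explicit, via $a+b=p-2$, what the paper compresses into the remark that $K_{1,5}$ has no three disjoint induced $\ol{K}_2$.
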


{
\begin{proof}
  Let $z\leq p-1$.   If $G\in{ \KK_{5,6}(n,z)}$, write $G=K_{5,6}(n,x,y)$, where $2x+3y=n-p-q-r+1$, $5x+6y\le n$ and { $x+y\le z$}. 
    Each independent set of size $t\in \{p,q,r\}$  in $G$ contains at least $t-x-y$ vertices from $(n-5x-6y)K_1$. Moreover,$(p-x-y)+(q-x-y)+(r-x-y)-(n-5x-6y)=p+q+r+2x+3y-n=1$. Thus $G$ contains no disjoint induced $\ol{K}_p,\ol{K}_q$ and $\ol{K}_r$.
    
    Suppose now $G=K_{5,6}^*(n)=(2p+2q+2r-6-n)K_5\cup (n-\frac{5(p+q+r)}{3}+4)K_6\cup K_{1,5}$, where $p=q=r$. Since $K_{1,5}$ has no three disjoint induced $\ol{K}_2$, $G$ contains no disjoint induced $\ol{K}_p,\ol{K}_q$ and $\ol{K}_r$.
\end{proof}}

\begin{lemma}\label{3r+3s} If $n\in [p+q, 3(p-1)+c_q]$ and 
\begin{equation}\label{3hold}
   3(n-p-q+1)\leq\ol{t}_{n-c_q,p-1},
\end{equation}
 then
there is a graph $G\in \KK_{3,4}(n,p-1)$ with 
 $e(G)\leq 3(n-p-q+1)$.

Moreover, if $n\in [3(p-1)+c_q, 4(p-1)+c_q]$, then  $3(n-p-q+1)\geq\ol{t}_{n-c_q,p-1}$.
\end{lemma}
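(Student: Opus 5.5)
Set $s_2=n-p-q+1\ge 1$. Membership of $K_{3,4}(n,x,y)$ in $\KK_{3,4}(n,p-1)$ requires $x+2y=s_2$, $3x+4y\le n$ and $x+y\le p-1$, and then $e(G)=3x+6y=3s_2$ automatically. So the first part reduces to finding nonnegative integers $x,y$ with these three constraints. The natural choice is to use as many triangles as possible: first try $y=0$, $x=s_2$. This works exactly when $3s_2\le n$ and $s_2\le p-1$. Otherwise I shift triangles to $K_4$'s, since replacing two $K_3$'s by one $K_4$ keeps $x+2y$ fixed and lowers both $x+y$ and the vertex count $3x+4y$. The extreme choice is $y=\lfloor s_2/2\rfloor$, $x=s_2-2y\in\{0,1\}$, which gives $x+y=\lceil s_2/2\rceil$ and $3x+4y=2s_2+x$.

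So I need $\lceil s_2/2\rceil\le p-1$ and $2s_2+(s_2\bmod 2)\le n$. Since $n\le 3(p-1)+c_q$ and $q\ge 2$, we get $s_2\le 2p-2-q+c_q+1\le 2p-2$, which is nearly the first inequality. The vertex condition $2s_2\le n$ is equivalent to $n\ge 2(p+q-1)$ up to parity. I expect it to fail for small $n$. In that range I would instead use a mixed choice: pick $y$ as small as possible subject to $3x+4y\le n$, namely $y=\max\{0,3s_2-n\}$, provided $3s_2-n\le s_2/2$.

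This is where hypothesis \eqref{3hold} must enter. My plan is to show that whenever no admissible $(x,y)$ exists, we in fact have $3s_2>\ol{t}_{n-c_q,p-1}$. To do this, I would bound $\ol{t}_{n-c_q,p-1}$ from below by $\binom{\lceil m\rceil}{2}$-type sums, where $m=(n-c_q)/(p-1)\in[1,3]$, so each part has size $1$, $2$ or $3$. Then I compare this with $3s_2$ in the two failing regimes: $x+y>p-1$ (where $s_2$ is large) and $3x+4y>n$ (where $n<2s_2$). The second regime corresponds to $p+q$ large relative to $n$. There $\ol t$ is small, so the comparison needs care, and I expect it to be the main obstacle, including the parity and $c_q$ bookkeeping. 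The case $p=q$, where $c_q=1$, needs separate checking.

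For the ``moreover'' part, write $n-c_q=a(p-1)+b$ with $a\in\{3,4\}$ and $0\le b<p-1$ (or $n-c_q=4(p-1)$ exactly). Then
$$\ol{t}_{n-c_q,p-1}=(p-1)\binom{a}{2}+ab.$$
For $a=3$ this is $3(p-1)+3b=3(n-c_q-2(p-1))$, and $3s_2-\ol{t}=3(p-q+1+c_q-p+1)\cdot$ simplifies to $3(2-q+c_q+\dots)$; I would check this directly. The sign should reduce to $q-1\le p-1+\dots$ type inequalities that hold since $p\ge q$, with equality exactly at the endpoint $n=3(p-1)+c_q$ matching the first part. Both functions are linear in $n$ on the interval (with slopes $3$ and $3$), so checking the endpoints suffices.
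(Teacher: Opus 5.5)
Your reduction is correct. So is your observation that, with $x+2y=s_2$ fixed, trading two triangles for a $K_4$ lowers both $x+y=s_2-y$ and $3x+4y=3s_2-2y$. Together with your bound $s_2\le 2p-2$, this shows the only constraint that really needs \eqref{3hold} is the vertex count $2s_2+(s_2\bmod 2)\le n$. That is exactly the step you leave open as ``the main obstacle,'' and your plan for it is wrong in three ways.

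\begin{itemize}
\item To derive $3s_2>\ol{t}_{n-c_q,p-1}$ from the nonexistence of an admissible $(x,y)$, you need an \emph{upper} bound on $\ol{t}_{n-c_q,p-1}$, not a lower bound.
\item The condition $2s_2\le n$ is equivalent to $n\le 2(p+q-1)$, not $n\ge 2(p+q-1)$. So it fails for $n$ large relative to $p+q$, for example $p=10$, $q=2$, $n=23$.
\item Since both constraints improve as $y$ grows, a ``mixed choice with $y$ as small as possible'' can never rescue a case where the extreme choice $y=\lfloor s_2/2\rfloor$ fails. Incidentally, the smallest $y$ with $3x+4y\le n$ is $\lceil(3s_2-n)/2\rceil$, not $3s_2-n$.
\end{itemize}

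The missing idea is a one-line structural bound. Since $n-c_q\le 3(p-1)$, every part of $T_{n-c_q,p-1}$ has at most three vertices. So $\ol{T}_{n-c_q,p-1}$ is a disjoint union of at most $p-1$ cliques of order at most $3$. It therefore has maximum degree at most $2$, giving $\ol{t}_{n-c_q,p-1}\le n-c_q$, and also $\ol{t}_{n-c_q,p-1}\le 3(p-1)$. Now \eqref{3hold} gives $3s_2\le n$ and $s_2\le p-1$. Hence even your first choice $x=s_2$, $y=0$ is admissible; the paper uses the balanced choice with $x+y=\lceil s_2/2\rceil$, which works equally well. No parity or $c_q$ case analysis is needed.

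For the ``moreover'' part, your formula $\ol{t}=(p-1)\binom{a}{2}+ab$ and the consequence $\ol{t}_{n-c_q,p-1}=3(n-c_q)-6(p-1)$ on the whole interval are correct, and this is the paper's route. Finish the computation, though: $3s_2-\ol{t}_{n-c_q,p-1}=3(p-1+c_q-q)$. This is constant on the interval and nonnegative because $q\le p-1+c_q$ by the definition of $c_q$. So equality does not occur ``exactly at the endpoint.'' It holds throughout the interval when $q=p-1+c_q$ and nowhere otherwise.
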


\begin{proof}
First suppose that $p+q\leq n\leq 3(p-1)+c_q$
and~\eqref{3hold} holds.
Then every component of $\ol{T}_{n-c_q,p-1}$ has at most
three vertices, so $\ol{t}_{n-c_q,p-1}\leq\frac{2(n-c_q)}{2}=n-c_q$ and
$\ol{t}_{n-c_q,p-1}\leq 3(p-1)$.

Thus $3(n-p-q+1)\leq n-c_q$ and
$n-p-q+1\leq p-1$.
Let $k=\lceil(n-p-q+1)/2\rceil$,
$x=2k-(n-p-q+1)$ and $y=n-p-q+1-k$.
Since $n-p-q+1\geq 1$, we have $x\in\{0,1\}$ and $y\geq 0$.
Moreover,
$$
x+2y=n-p-q+1,\qquad
x+y=k\leq n-p-q+1\leq p-1,
$$
and
$$
3x+4y=n-p-q+1+2k
\leq 3(n-p-q+1)\leq n-c_q\leq n.
$$
Thus $K_{3,4}(n,x,y)\in\KK_{3,4}(n,p-1)$
and $e(K_{3,4}(n,x,y))=3(n-p-q+1)$.

Suppose that $3(p-1)+c_q\leq n\leq 4(p-1)+c_q$.
In this range,
$\ol{t}_{n-c_q,p-1}=3(n-c_q)-6(p-1)$.
By the definition of $c_q$, $q\leq p-1+c_q$, and hence
$$
3(n-p-q+1)\geq 3(n-2(p-1)-c_q)
=\ol{t}_{n-c_q,p-1}.
$$
\end{proof}

\begin{lemma}\label{5s} If $n\in [p+q+r+1, 5(p-1)+c]$ and
\begin{equation}\label{5hold}
   5s\leq \ol{t}_{n-c,p-1}, \end{equation}
 then
there is a graph $G\in \KK_{5,6}(n,p-1)$ with  $e(G)\leq 5s$.

Moreover, if $n\in [5(p-1)+c, 6(p-1)+c]$, then  $5s\geq\ol{t}_{n-c,p-1}$.
\end{lemma}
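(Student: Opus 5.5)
We mirror the proof of Lemma~\ref{3r+3s}, replacing $K_3,K_4$ by $K_5,K_6$ (so the per-unit costs are $2$ and $3$ instead of $1$ and $2$), and $c_q$ by $c$.

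\emph{Range $n\le 5(p-1)+c$ under \eqref{5hold}.} First we bound $\ol{t}_{n-c,p-1}$. Since $n-c\le 5(p-1)$, every part of $T_{n-c,p-1}$ has at most $5$ vertices. Hence every component of $\ol{T}_{n-c,p-1}$ is a clique on at most $5$ vertices. This gives $\ol{t}_{n-c,p-1}\le \frac{4(n-c)}{2}=2(n-c)$ and also $\ol{t}_{n-c,p-1}\le 10(p-1)$. Combining these with \eqref{5hold} yields
$$5s\le 2(n-c)\le 2n \qquad\text{and}\qquad s\le 2(p-1).$$

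\emph{Construction.} Write $s=2x+3y$ with $x$ as small as possible: take $y=\lfloor s/3\rfloor$ or $\lfloor s/3\rfloor-1$ according to the residue of $s$ mod $3$. Then $x\in\{0,1,2\}$, except when $s=1$. That case is easily dismissed: $s=1$ means $n=p+q+r$, which lies outside the range $n\ge p+q+r+1$ (for $s=1$ the target is in any case $e\le 5$). So assume $s\ge2$, which makes $x,y\ge 0$ available. Now check the three conditions.

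The vertex count satisfies
$$5x+6y=2s+(x+y)\cdot\ldots,$$
and more precisely $5x+6y\le \tfrac{5}{2}(2x+3y)=\tfrac{5s}{2}$ since $5x\le 5x$ and $6y\le 7.5y$. Therefore $5x+6y\le 5s/2\le n$.

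For the number of components we need $x+y\le p-1$. Since $x+y\le \lceil s/2\rceil$ for this choice (because $x\le 2$), it suffices to have $\lceil s/2\rceil\le p-1$, and this follows from $s\le 2(p-1)$.

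The number of edges is $10x+15y=5s$. Hence $K_{5,6}(n,x,y)\in\KK_{5,6}(n,p-1)$ with $e=5s$.

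\emph{Range $5(p-1)+c\le n\le 6(p-1)+c$.} Here every part of $T_{n-c,p-1}$ has size $5$ or $6$, so
$$\ol{t}_{n-c,p-1}=10(p-1)+5\bigl(n-c-5(p-1)\bigr)=5(n-c)-15(p-1).$$
By the definition of $c$ we have $q+r\le 2(p-1)+c$, since each of $q,r$ is at most $p-1$ unless it equals $p$. Therefore
$$5s=5(n-p-q-r+1)\ge 5\bigl(n-3(p-1)-c\bigr)=\ol{t}_{n-c,p-1}.$$

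\emph{Main obstacle.} The delicate step is the bookkeeping in the first range: verifying $x+y\le p-1$ and $5x+6y\le n$ simultaneously for the chosen decomposition, including the small cases $s\in\{2,3,4,5\}$. If the bound $x+y\le\lceil s/2\rceil$ turns out insufficient in edge cases, I would instead use the sharper component bound $\ol{t}_{n-c,p-1}\le 2(n-c)$ together with $s\le 2(p-1)$ to choose $y$ maximal.
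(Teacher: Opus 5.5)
Your proposal is correct and follows the paper's proof: it uses the same component-size bounds to get $5s\le 2(n-c)$ and $s\le 2(p-1)$, and the same decomposition with $y$ maximal, i.e.\ $x=3\lceil s/3\rceil-s\in\{0,1,2\}$. The only difference in that step is that the paper bounds $x+y=\lceil s/3\rceil\le s/2$ while you bound $x+y$ by $\lceil s/2\rceil$; the second range is the identical computation $\ol{t}_{n-c,p-1}=5(n-c)-15(p-1)$ together with $q+r\le 2(p-1)+c$.

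The unfinished display should read $5x+6y=2s+x$. Your worry in the last paragraph is unfounded, since $x+y=(s-y)/2\le s/2$ holds for every decomposition $s=2x+3y$ with $x,y\ge 0$.
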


\begin{proof}
First suppose that $p+q+r+1\leq n\leq 5(p-1)+c$ and~\eqref{5hold} holds.
Then every component of $\ol{T}_{n-c,p-1}$ has at most five vertices, so $\ol{t}_{n-c,p-1}\leq \frac{4(n-c)}{2}=2(n-c)$ and $\ol{t}_{n-c,p-1}\leq 10(p-1)$.

Consequently, $5s\leq 2(n-c)$ and $s\leq 2(p-1)$.
Let $k=\lceil s/3\rceil$, $x=3k-s$ and $y=s-2k$.
Since $s\geq 2$, we have $k\leq s/2$, and therefore $x,y$ are nonnegative integers satisfying
$$
2x+3y=s,\qquad x+y=k\leq p-1,\qquad
5x+6y=s+3k\leq \frac{5s}{2}\leq n-c\leq n.
$$
Thus $K_{5,6}(n,x,y)\in\KK_{5,6}(n,p-1)$ and $e(K_{5,6}(n,x,y))=5s$.

Suppose that $5(p-1)+c\leq n\leq 6(p-1)+c$. In this range, $\ol{t}_{n-c,p-1}=5(n-c)-15(p-1)$. By the definitions of $c$, $$5s=5(n-p-q-r+1)\geq 5(n-3(p-1)-c)=\ol{t}_{n-c,p-1}.
$$
\end{proof}

\bigskip
\begin{proof}[\bf Proof of Theorem~\ref{pqr2}] Let $G$ be an $n$-vertex graph with no disjoint independent sets of sizes $p,q$ and $r$ and with
$e(G)=\cex(n,K_p\cup K_q\cup K_r)$. Since $G$ is extremal,
\begin{equation}\label{crG}
    e(G)\leq \ol{t}_{n-c,p-1}.
\end{equation}
If $G$ does not contain an induced $\ol{K}_p$, then by 
Theorem~\ref{Turan}, $e(G)\geq \ol{t}_{n,p-1}\geq \ol{t}_{n-c,p-1}$. Together with~\eqref{crG}, we get $c=0$ and $e(G)= \ol{t}_{n-c,p-1}$,
so our theorem holds. Hence below we assume $G$ contains  an induced $\ol{K}_p$.

\medskip
{\bf Case 1:}  $G$ does not contain disjoint induced $\ol{K}_p$ and $\ol{K}_q$.
In this case, for any edge $e\in E(G)$, the graph $G-e$ still does not have three disjoint copies of induced $\ol{K}_p$. Thus, by the extremality of $G$, in this case $c\leq 1$. Since $\cex(n,K_p\cup K_q\cup K_r)\leq \cex(n,K_p\cup K_q)$, we conclude that
 $e(G)=\cex(n,K_p\cup K_q)$.
 
 By Theorem~\ref{PQ} and~\eqref{crG}, $3(r+s)\le e(G)\le\ol{t}_{n-c,p-1}$.
If $n\in [p+q+r+1, 4(p-1)+c]$, then by Lemma~\ref{3r+3s}, we have $e(G)= 3(r+s)$.

Assume that $n\geq 4(p-1)+c+1$. Since
$\ol{t}_{4(p-1),p-1}=6(p-1)$ and 
$$3((4(p-1)+c)-p-q+1)=3(3p-4-(q-c)+1)\geq 3(2p-2)=\ol{t}_{4(p-1),p-1},
$$ Theorem~\ref{PQPQR}(1) for $\ell=2$ yields that $e(G)=\cex(n,K_p\cup K_q)=\ol{t}_{n-c,p-1}$.

\medskip
{\bf Case 2:}  $G$ contains disjoint induced $\ol{K}_p$ and $\ol{K}_q$. By Theorem~\ref{PQR} and~\eqref{crG}, $5s\le e(G)\le\ol{t}_{n-c,p-1}$.
If $n\in [p+q+r+1, 5(p-1)+c]$, then by Lemma~\ref{5s}, we have $e(G)=5s$.
 
Assume that $n\geq 5(p-1)+c+1$. Since $\ol{t}_{5(p-1),p-1}=10(p-1)$
and 
$$5((5(p-1)+c)-p-q-r+1)\geq \left\{\begin{array}{lll} 5(4p-5-(q-c)-r+1)\geq 5(2p-2) & \mbox{if $c\leq 1$,}\\ 
5(4p-5-(q-1)-(r-1)+1)\geq 5(2p-2) & \mbox{if $c=2$,}\end{array}\right.
$$ Theorem~\ref{PQPQR}(1)  for $\ell=3$ yields that $e(G)=\cex(n,K_p\cup K_q\cup K_r)=\ol{t}_{n-c,p-1}$.

 Again, our theorem holds. \end{proof}

\section{Proof of Theorem~\ref{EXpqr2}\label{main2}}

\bigskip
\bigskip

Let $\GG_n=\CEX(n,K_p\cup K_q\cup K_r)$.  Then for each $G\in \GG_n$, $e(G)\leq \ol{t}_{n-c,p-1}$.

\begin{Claim}\label{claim9.1} Let $G\in \GG_n$.
    (i) If $G$ contains no $\ol{K}_p$, then $c=0$;\\
    (ii) If $G$ contains $\ol{K}_p$ and no disjoint induced $\ol{K}_p$ and $\ol{K}_q$, then $c\le 1$.
\end{Claim}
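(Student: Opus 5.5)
Both parts will come from comparing $e(G)$ with the upper bound $e(G)\le \ol{t}_{n-c,p-1}$, which holds for every $G\in\GG_n$ because $\ol{K}_c\cup \ol{T}_{n-c,p-1}$ is admissible. This graph contains no disjoint induced $\ol{K}_p,\ol{K}_q,\ol{K}_r$. Indeed, $\ol{T}_{n-c,p-1}$ has independence number at most $p-1$, so every independent set has at most $p-1$ vertices there, plus possibly one of the $c$ isolated vertices. Hence an independent set of size $p$ must use one of these $c$ isolated vertices. Since exactly $c+1$ of $p,q,r$ equal $p$, we would need $c+1$ disjoint independent sets of size $p$, each using its own isolated vertex, which is impossible.

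For (i), I would assume $G$ contains no induced $\ol{K}_p$. Then $G$ is a $\ol{K}_p$-free $n$-vertex graph, so Theorem~\ref{Turan} applied to $\ol{G}$ gives $e(G)\ge \ol{t}_{n,p-1}$. Combining this with $e(G)\le \ol{t}_{n-c,p-1}$ yields $\ol{t}_{n,p-1}\le \ol{t}_{n-c,p-1}$. Since $n\ge p+q+r>p-1$, the function $m\mapsto \ol{t}_{m,p-1}$ is strictly increasing for $m\ge p-1$: adding a vertex to $\ol{T}_{m,p-1}$ adds at least one edge, because some clique of the complement is nonempty. Therefore $c=0$.

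For (ii), I would argue by contradiction, assuming $c=2$, i.e.\ $p=q=r$. Since $G$ has no two disjoint induced $\ol{K}_p$, every edge can be removed without creating two disjoint independent $p$-sets, hence certainly without creating three. So, as in the paper's Case~1 argument, $e(G)$ is determined by the bound for $2K_p$. The cleanest route is a direct edge comparison. By Theorem~\ref{PQ} (with $q=p$ and $s_2=n-2p+1$), $e(G)\ge 3s_2=3(n-2p+1)$. On the other hand, $e(G)\le \ol{t}_{n-2,p-1}$, and equality is forced at $n=3p$ by Theorem~\ref{thm11} in the base case. I would like to show $3(n-2p+1)>\ol{t}_{n-2,p-1}$ for all $n\ge 3p$. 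This is the step I expect to be the main obstacle, because for large $n$ the Turán-type bound overtakes the linear bound.

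So I would instead use a structural argument. Any $G$ without two disjoint induced $\ol{K}_p$ is itself, for $K_p\cup K_p$, a competitor with at least $\cex(n,2K_p)$ edges. By Theorem~\ref{pqr2}, $\cex(n,3K_p)=\min\{5s,\ol{t}_{n-2,p-1}\}$ in the relevant ranges (the $3(s+r)$ term only appears for smaller $c$). For $c=1$ the corresponding two-clique value is $\min\{3(s+r),\ol{t}_{n-1,p-1}\}$, which is strictly larger: $\ol{t}_{n-1,p-1}>\ol{t}_{n-2,p-1}$ by strict monotonicity, and $3(s+r)=3(n-2p+1)\ge 5s$ whenever $5s$ is the minimum, since $3r\ge 2s$ in the range $n\le 5(p-1)+2$. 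Checking this inequality strictly, including in the boundary case where the minimum is attained by $\ol{t}$, is the delicate step. It yields $e(G)>\cex(n,3K_p)$, contradicting $G\in\GG_n$. Hence $c\le 1$.
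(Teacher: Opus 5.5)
Your part (i) is correct and takes a different route from the paper: you bound $e(G)$ below by $\ol t_{n,p-1}$ (Tur\'an) and above by $\ol t_{n-c,p-1}$ (the admissible graph $\ol K_c\cup\ol T_{n-c,p-1}$), and then use $\ol t_{m+1,p-1}-\ol t_{m,p-1}=\lfloor m/(p-1)\rfloor\ge 1$ for $m\ge p-1$. Two small corrections: monotonicity needs every clique of $\ol T_{m,p-1}$ to be nonempty, not just some; and an independent set may use several isolated vertices, not ``possibly one'' (your pigeonhole only needs at least one).

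Part (ii) is not proved, and the route you settle on rests on two false statements. First, $\min\{3(s+r),\ol t_{n-1,p-1}\}$ equals $\cex(n,2K_p)$ only for $n\le 4p-3$. Beyond that it is merely a lower bound, and a too-weak one; the true value is $\ol t_{n-1,p-1}$ (Claim~\ref{claim9.2}(i)). Second, $3r\ge 2s$ holds only for $n\le (9p-2)/2$, not on the whole range $n\le 5(p-1)+2$. For example, take $p=q=r=10$ and $n=45$. Then $s=16$ and $\cex(45,3K_{10})=\min\{80,\ol t_{43,9}\}=\min\{80,82\}=80$, with $5s$ the minimum. Your two-clique value is $\min\{3(s+r),\ol t_{44,9}\}=\min\{78,86\}=78$, so the strict inequality you need reads $78>80$. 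The case $n=3p$, which lies outside Theorem~\ref{pqr2}, is also left unargued. Plugging in the correct value of $\cex(n,2K_p)$ would rescue the comparison, but only by importing Theorems~\ref{pqr2} and~\ref{PQPQR} for what is a one-line argument.

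The missing step is one you already wrote down and then did not use. Suppose $c=2$, so $p=q=r$. Then $G$ has an edge $e$, since otherwise $G=\ol K_n$ with $n\ge 3p$ would contain disjoint $\ol K_p$ and $\ol K_q$. Suppose $G-e$ had three disjoint independent $p$-sets. At most one of them contains both ends of $e$, so the other two would be disjoint independent $p$-sets in $G$, which is impossible. Hence $G-e$ is still admissible and has fewer edges, contradicting $G\in\GG_n$.

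The justification you gave, ``every edge can be removed without creating two disjoint independent $p$-sets,'' is false. In $K_3\cup\ol K_1$ with $p=2$, deleting a triangle edge $ab$ makes $\{a,b\}$ and $\{c,v\}$ disjoint independent pairs. Only the weaker ``no three'' survives, via the pigeonhole above. This deletion argument is the paper's entire proof. It also gives (i) directly: if $G$ has no independent $p$-set, every independent $p$-set of $G-e$ contains both ends of $e$, so $G-e$ has no two disjoint ones.
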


\begin{proof}Let $e\in E(G)$. If $G$ contains no $\ol{K}_p$, then $G-e$ does not have two disjoint copies of induced $\ol{K}_p$. Thus $c=0$. If $G$ contains $\ol{K}_p$ and no disjoint induced $\ol{K}_p$ and $\ol{K}_q$, then $G-e$ does not have three disjoint copies of induced $\ol{K}_p$. Hence $c\le 1$. \end{proof}

\begin{Claim}\label{claim9.2} 
    (i)  {   If $n\ge 4(p-1)+c_q$,
     then} $\CEX(n, K_p\cup K_q)=\{\ol{K}_{c_q}\cup\ol{T}_{n-c_q,p-1}\}$;\\
    (ii)  {   If $n\ge 6(p-1)+c-1$,
     then}
    $\CEX(n, K_p\cup K_q\cup K_r)=\{\ol{K}_c\cup\ol{T}_{n-c,p-1}\}$.
\end{Claim}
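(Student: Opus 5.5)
Both parts follow the same template: find a single value $n_4$ at which the extremal family is exactly the claimed graph, and then apply Theorem~\ref{PQPQR}(2) to propagate uniqueness to all $n\ge n_4$. Taking complements, $\ol{K}_c\cup\ol{T}_{n-c,p-1}$ is the complement of $K_c\vee T_{n-c,p-1}$. The constant $c$ of Theorem~\ref{PQPQR} agrees with our $c_q$ when $\ell=2$ and with our $c$ when $\ell=3$, because it counts the $p_i$ with $p_i=p_1$, minus one.

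For (i), I would take $n_4=4(p-1)+c_q$. Let $G\in\CEX(n_4,K_p\cup K_q)$. By Theorem~\ref{cite1} (equivalently Theorem~\ref{PQ} combined with Lemma~\ref{3r+3s}), every graph containing an induced $\ol{K}_p$ but no disjoint induced $\ol{K}_p,\ol{K}_q$ has at least $3(n-p-q+1)$ edges. Lemma~\ref{3r+3s} gives $3(n-p-q+1)\ge\ol{t}_{n-c_q,p-1}$ at this $n$.

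\begin{itemize}
\item If $G$ contains no induced $\ol{K}_p$, Theorem~\ref{Turan} forces $G=\ol{T}_{n,p-1}$. This is possible only when $c_q=0$, by the argument of Claim~\ref{claim9.1}(i).
\item Otherwise $G$ contains an induced $\ol{K}_p$ and no disjoint pair. By Theorem~\ref{PQ}, either $e(G)>3s$ or $G$ is one of the $K_3/K_4$ (or $K_{1,3}$) structures.
\end{itemize}

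The obstacle is equality. Strictness of $3(n-p-q+1)>\ol{t}_{n-c_q,p-1}$ fails when $q=p-1+c_q$. In that case I would compute directly: at $n=4(p-1)+c_q$ a graph in $\KK_{3,4}$ needs $x+2y=2p-2$ with $x+y\le p-1$, so $x=0$, $y=p-1$ and $4y=4p-4\le n$. This shows such graphs can still tie, so $n_4$ must be shifted to $4(p-1)+c_q+1$ when they do. The tie structure is exactly what Theorem~\ref{cite1} records: uniqueness of $\ol{T}_{n,p-1}$ begins at $n\ge 4q$ when $p-1=q$. I would therefore lean on the characterization in Theorem~\ref{cite1}, which already gives uniqueness for large $n$, and check that its thresholds are at most $4(p-1)+c_q$ in each of its three regimes.

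For (ii), I would argue the same way with $n_4=6(p-1)+c-1$. Classify $G\in\CEX(n_4,K_p\cup K_q\cup K_r)$ by $\mathcal H_0,\mathcal H_1,\mathcal H_2$.

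\begin{itemize}
\item Graphs in $\mathcal H_2$ have at least $5s$ edges by Theorem~\ref{PQR}, with equality only for the $K_5/K_6$ (or $K_{1,5}$) structures. A $\KK_{5,6}$ graph needs $2x+3y=s$, $x+y\le p-1$ and $5x+6y\le n$. At $n=6(p-1)+c-1$ I would check $s>2(p-1)$ or that the vertex count fails, so that $5s>\ol{t}_{n-c,p-1}$ or no such graph exists. This mirrors the second part of Lemma~\ref{5s}, pushed one step further by exact computation of $\ol{t}_{n-c,p-1}=5(n-c)-15(p-1)+\dots$ in the range just below $6(p-1)$.
\item Graphs in $\mathcal H_1$ have at least $\cex(n,K_p\cup K_q)$ edges. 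By part (i) together with Theorem~\ref{pqr2}, this count is $\ol{t}_{n-c_q,p-1}\ge\ol{t}_{n-c,p-1}$, with equality only if $c=c_q$, i.e.\ $c\le 1$; equality then forces the graph $\ol{K}_{c_q}\cup\ol{T}_{n-c_q,p-1}$.
\item Graphs in $\mathcal H_0$ are handled by Theorem~\ref{Turan}.
\end{itemize}

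Collecting the three cases leaves only $\ol{K}_c\cup\ol{T}_{n-c,p-1}$. Theorem~\ref{PQPQR}(2) with $\ell=3$ then finishes the proof. I expect the main difficulty to be the exact edge comparison at the boundary value $n_4$ when $c=2$, since equalities in $5s\ge\ol{t}$ are tight there. This is also where the $K^*_{5,6}$ threshold $6p-6$ in Theorem~\ref{EXpqr2} comes from.
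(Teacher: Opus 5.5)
Your template is exactly the paper's: fix a base value $n_4$, identify $\CEX$ there, and propagate with Theorem~\ref{PQPQR}(2). Your choices $n_4=4(p-1)+c_q$ and $n_4=6(p-1)+c-1$ also match. The gap is in the one step that needs work, namely the equality case at $n_4$.

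In (i), when $q=p-1+c_q$ you correctly find that a graph from alternative~(b) of Theorem~\ref{PQ} must have $x=0$, $y=p-1$. You then treat this as an obstruction and propose shifting $n_4$ to $4(p-1)+c_q+1$, which fails in two ways.
\begin{enumerate}
\item The case $n=4(p-1)+c_q$ is part of the claim, so a shifted base leaves it unproved.
\item Your fallback, Theorem~\ref{cite1}, assumes $p>q$. It does settle $c_q=0$, since all its thresholds are at most $4(p-1)$; for $p-1=q$ it even gives uniqueness already at $n=4q=n_4$, so no shift is needed. It says nothing when $p=q$.
\end{enumerate}
The missing observation is that the tying graph is $(p-1)K_4\cup\ol K_{c_q}$. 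Since $\ol T_{4(p-1),p-1}=(p-1)K_4$, this \emph{is} $\ol K_{c_q}\cup\ol T_{4(p-1),p-1}$. For $c_q=0$ it has no induced $\ol K_p$ at all, contradicting the case assumption. You must also rule out alternative~(c) of Theorem~\ref{PQ}: solving $3x'+6y'+3=6(p-1)$ and $3x'+4y'+4=4p-3$ gives $x'=-1$.

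The same oversight breaks your $\HH_2$ branch in (ii). You plan to show that either $5s>\ol t_{n-c,p-1}$ or no $\KK_{5,6}$-graph exists, but ties genuinely occur. For $c=2$ one has $s=3p-4>2(p-1)$ when $p\ge 3$, yet $5s=15p-20=\ol t_{6p-7,p-1}$, and $K_{5,6}(6p-5,1,p-2)$ exists. The implication $s>2(p-1)\Rightarrow 5s>\ol t_{n-c,p-1}$ behind Lemma~\ref{5s} needs every component of $\ol T_{n-c,p-1}$ to have at most five vertices. That fails once $n-c=6(p-1)-1$.

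The paper instead uses the structure from Theorem~\ref{PQR}. In the equality case, $2x+3y=3p-4$ and $5x+6y\le 6p-7+c$ force $x\le 1+c$ and $x\equiv 1\pmod 3$. Hence $x=1$, $y=p-2$, and $G=\ol K_c\cup K_5\cup(p-2)K_6=\ol K_c\cup\ol T_{n-c,p-1}$. For $c\le 1$ this graph has no disjoint induced $\ol K_p,\ol K_q$, so it is not in $\HH_2$; for $c=2$ it is the target graph. The $K_{1,5}$ alternative again gives $x'=-1$.

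Your $\HH_0$ and $\HH_1$ branches agree with the paper. However, the $\HH_1$ branch invokes (i) with $c_q=1$, which your version of (i) does not establish.
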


\begin{proof} Suppose there is $J_{4(p-1)+c_q}\in\CEX(4(p-1)+c_q, K_p\cup K_q)$. 
Note that $e(J_{4(p-1)+c_q})\le\ol{t}_{4(p-1),p-1}$. If $J_{4(p-1)+c_q}$ contains no $\ol{K}_p$, then by Theorem~\ref{Turan}, $c_q=0$ and $J_{4(p-1)+c_q}=\ol{T}_{4(p-1),p-1}$. 

If $J_{4(p-1)+c_q}$ contains $\ol{K}_p$, then by Theorem~\ref{PQ}, $e(J_{4(p-1)+c_q})\ge 3(4(p-1)+c_q-p-q+1)$. Since
$\ol{t}_{4(p-1),p-1}=6(p-1)$ and 
$$3(4(p-1)+c_q-p-q+1)=3(3p-3+c_q-q)\geq 3(2p-2)=\ol{t}_{4(p-1),p-1},
$$we have $3(4(p-1)+c_q-p-q+1)=\ol{t}_{4(p-1),p-1}$. Hence $q=\left\{\begin{array}{lll} p-1, & \mbox{if $c_q=0$,}\\ p, & \mbox{if $c_q=1$.}\end{array}\right.$ By Theorem~\ref{PQ}, $J_{4(p-1)+c_q}=xK_3\cup yK_4\cup\ol{K}_{4(p-1)+c_q-3x-4y}$ or $J_{4(p-1)+c_q}=x'K_3\cup y'K_4\cup K_{1,3}$. If the former holds, then since
$3x+6y=3(4(p-1)+c_q-p-q+1)=6(p-1)$ and $3x+4y\le 4(p-1)+c_q$, we have $x=0$ and $y=p-1$. If $c_q=0$, then $J_{4(p-1)+c_q}=(p-1)K_4$, a contradiction, as $(p-1)K_4$ contains no $\ol{K}_p$. If $c_q=1$, then $J_{4(p-1)+c_q}=K_1\cup (p-1)K_4=\ol{K}_1\cup\ol{T}_{4(p-1),p-1}$. If the latter holds, then since $p=q$, $3x'+6y'+3=3(4(p-1)+c_q-p-q+1)=6(p-1)$ and $3x'+4y'+4=4(p-1)+c_q=4p-3$, we have $x'=-1$ and $y'=p-1$, a contradiction. Thus $\CEX(4(p-1)+c_q, K_p\cup K_q)=\{\ol{K}_{c_q}\cup\ol{T}_{4(p-1),p-1}\}$. Then Theorem~\ref{PQPQR}(2) for $\ell=2$ yields that $\CEX(n, K_p\cup K_q)=\{\ol{K}_{c_q}\cup\ol{T}_{n-c_q,p-1}\}$ for $n\ge 4(p-1)+c_q$. This proves (i).

Suppose that there is $G_{6(p-1)+c-1}\in\CEX(6(p-1)+c-1, K_p\cup K_q\cup K_r)$. Recall that $e(G_{6(p-1)+c-1})\le\ol{t}_{6(p-1)-1,p-1}$. If $G_{6(p-1)+c-1}$ contains no $\ol{K}_p$, then by Theorem~\ref{Turan}, $c=0$ and $G_{6(p-1)+c-1}=\ol{T}_{6(p-1)-1,p-1}$. 

If $G_{6(p-1)+c-1}$ contains $\ol{K}_p$ and no disjoint induced $\ol{K}_p$ and $\ol{K}_q$, then by Claim~\ref{claim9.1}, $c\le 1$. Moreover, $e(G_{6(p-1)+c-1})\ge\cex(6(p-1)+c-1,K_p\cup K_q)$. Since $\cex(n,K_p\cup K_q\cup K_r)\le\cex(n,K_p\cup K_q)$, $e(G_{6(p-1)+c-1})=\cex(6(p-1)+c-1,K_p\cup K_q)$. Thus $G_{6(p-1)+c-1}\in\CEX(6(p-1)+c-1, K_p\cup K_q)$. By (i), $G_{6(p-1)+c-1}=\ol{K}_c\cup\ol{T}_{6(p-1)-1,p-1}$. 

If $G_{6(p-1)+c-1}$ contains disjoint induced $\ol{K}_p$ and $\ol{K}_q$, then by Theorem~\ref{PQR}, $e(G_{6(p-1)+c-1})\ge 5(6(p-1)+c-1-p-q-r+1)$. Since
$\ol{t}_{6(p-1)-1,p-1}=15(p-2)+10=15p-20$ and 
$$5(6(p-1)+c-1-p-q-r+1)=5(5p-6+c-q-r)\geq 5(3p-4)=\ol{t}_{6(p-1)-1,p-1},
$$we have $5(6(p-1)+c-1-p-q-r+1)=\ol{t}_{6(p-1)-1,p-1}$. Hence 
$$\mbox{
$q=\left\{\begin{array}{lll} p-1, & \mbox{if $c=0$,}\\ p, & \mbox{if $c\ge 1$,}\end{array}\right.$ and $r=\left\{\begin{array}{lll} p-1, & \mbox{if $c\le 1$,}\\ p, & \mbox{if $c=2$.}\end{array}\right.$ }
$$
By Theorem~\ref{PQR}, $G_{6(p-1)+c-1}=xK_5\cup yK_6\cup\ol{K}_{6(p-1)+c-1-5x-6y}$ or $G_{6(p-1)+c-1}=x'K_5\cup y'K_6\cup K_{1,5}$. If the former holds, then since
$$\mbox{
$10x+15y=5(6(p-1)+c-1-p-q-r+1)=15p-20$ and $5x+6y\le 6(p-1)+c-1$,}$$
we have $x=1$ and $y=p-2$. If $c\le 1$, then $G_{6(p-1)+c-1}=\ol{K}_c\cup K_5\cup (p-2)K_6$, a contradiction as $K_5\cup (p-2)K_6$ and $\ol{K}_1\cup K_5\cup (p-2)K_6$ contain no disjoint induced $\ol{K}_p$ and $\ol{K}_q$. If $c=2$, then $G_{6(p-1)+c-1}=\ol{K}_2\cup K_5\cup (p-2)K_6=\ol{K}_2\cup\ol{T}_{6(p-1)-1,p-1}$. If the latter holds, then since $p=q=r$, $$10x'+15y'+5=5(6(p-1)+c-1-p-q-r+1)=15p-20$$ and $5x'+6y'+6=6(p-1)+c-1=6p-5$, we have $x'=-1$ and $y'=p-1$, a contradiction. Thus $\CEX(6(p-1)+c-1, K_p\cup K_q\cup K_r)=\{\ol{K}_c\cup\ol{T}_{6(p-1)-1,p-1}\}$. Then Theorem~\ref{PQPQR}(2) for $\ell=3$ yields that $\CEX(n, K_p\cup K_q\cup K_r)=\{\ol{K}_c\cup\ol{T}_{n-c,p-1}\}$ for $n\ge 6(p-1)+c-1$. This proves (ii). \end{proof}

\begin{Claim}\label{claim9.3}
    $\CEX(n, 2K_p)=\left\{\begin{array}{lll} \KK_{3,4}(n), & \mbox{if $2p\le n\le 3p-3$,}\\
    \KK_{3,4}(n)\cup\{K_{3,4}^*(n)\}, & \mbox{if $3p-2\le n\le 4p-4$,}\\ 
    \{{K}_1\cup\overline{T}_{n-1,p-1}\},  & \mbox{$n\ge 4p-3$.}\end{array}\right.$
\end{Claim}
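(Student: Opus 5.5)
We work with $p=q$, so $c_q=1$ and $s=n-2p+1$. The plan is to determine $\cex(n,2K_p)$ in each range and then read off the extremal graphs.

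\textbf{Step 1: split the extremal graphs.} Let $G\in\CEX(n,2K_p)$. If $G$ contains no induced $\ol{K}_p$, then Theorem~\ref{Turan} gives $e(G)\ge \ol t_{n,p-1}$. On the other hand, $K_1\cup\ol T_{n-1,p-1}$ has no two disjoint independent $p$-sets, because every independent $p$-set must use the isolated vertex. Since $\ol t_{n,p-1}>\ol t_{n-1,p-1}$, such a $G$ is never extremal. So we may assume $G$ contains an induced $\ol K_p$. Theorem~\ref{PQ} with $q=p$ then gives $e(G)\ge 3s$, and equality holds exactly in outcome (b) or outcome (c). Conversely, Lemma~\ref{lemma8.1} shows that every graph in $\KK_{3,4}(n)\cup\{K^*_{3,4}(n)\}$ and $K_1\cup\ol T_{n-1,p-1}$ is $2\ol K_p$-free. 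Hence
\[
\cex(n,2K_p)=\min\{3s,\ \ol t_{n-1,p-1}\}.
\]
By Lemma~\ref{3r+3s} with $c_q=1$, the minimum equals $3s$ when $n\le 3(p-1)+1$, provided a graph of $\KK_{3,4}(n)$ achieves it. It equals $\ol t_{n-1,p-1}$ when $n\ge 3p-2$.

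\textbf{Step 2: the range $n\ge 4p-3$.} This range follows directly from Claim~\ref{claim9.2}(i), which gives $\CEX(n,2K_p)=\{\ol K_1\cup\ol T_{n-1,p-1}\}$, i.e.\ $\{K_1\cup\ol T_{n-1,p-1}\}$ in the statement's notation.

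\textbf{Step 3: the range $2p\le n\le 4p-4$.} For $3(p-1)+1\le n\le 4(p-1)$ we have $\ol t_{n-1,p-1}=3(n-1)-6(p-1)=3s$, so in this whole range $e(G)=3s$. The extremal graphs are therefore exactly the graphs in outcomes (b) and (c) of Theorem~\ref{PQ} that also lack two disjoint independent $p$-sets. We identify them as follows.

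\emph{Outcome (b).} Write $G=xK_3\cup yK_4\cup\ol K_{n-3x-4y}$ with $x+2y=s$. Any independent set meets each clique component in at most one vertex. Two disjoint independent $p$-sets therefore exist iff $n-3x-4y+2(x+y)\ge 2p$, which rearranges to $x+2y\ge s$. So they exist unless the isolated-vertex count forces them apart, and the case analysis will show this happens iff $x+y\ge p$. Consequently $G$ is extremal iff $x+y\le p-1$, i.e.\ $G\in\KK_{3,4}(n)$. This is the main computation to check carefully. Note that $\ol K_1\cup\ol T_{n-1,p-1}$ also lies in $\KK_{3,4}(n)$ in this range.

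\emph{Outcome (c).} Here $G=x'K_3\cup y'K_4\cup K_{1,3}$. Counting gives $3x'+4y'+4=n$ and $3x'+6y'+3=3s$, which forces $x'=4p-4-n$ and $y'=n-3p+2$. Nonnegativity of both requires $3p-2\le n\le 4p-4$, and then $G=K^*_{3,4}(n)$. Each $K_{1,3}$ contributes at most two vertices to two disjoint independent sets (its two halves of leaves, since the center cannot join a leaf), and each clique contributes at most two. So the total available is at most $2(x'+y')+2<2p$, and $K^*_{3,4}(n)$ is $2\ol K_p$-free, consistent with Lemma~\ref{lemma8.1}.

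\textbf{Step 4: existence.} It remains to confirm that $\KK_{3,4}(n)$ is nonempty throughout $2p\le n\le 4p-4$. Take $y=\lfloor s/2\rfloor$ and $x=s-2y$. The conditions $x+y\le p-1$ and $3x+4y\le n$ then need verifying. The delicate part is the inequality bookkeeping at the boundaries $n=3p-3,\,3p-2$ and $4p-4$.
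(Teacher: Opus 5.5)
Your route is the paper's: Tur\'an's theorem plus $\ol t_{n,p-1}>\ol t_{n-1,p-1}$ forces every extremal graph to contain $\ol K_p$, Claim~\ref{claim9.2}(i) handles $n\ge 4p-3$, and for $2p\le n\le 4p-4$ Theorem~\ref{PQ}, the solution $x'=4p-4-n$, $y'=n-3p+2$, and Lemma~\ref{lemma8.1} give the two inclusions. The skeleton is sound. However, the outcome-(b) computation, which you yourself single out as the crux, is wrong as written, so the inclusion $\CEX(n,2K_p)\subseteq\KK_{3,4}(n)\cup\{K^*_{3,4}(n)\}$ is not established.

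The inequality $n-x-2y\ge 2p$ rearranges to $x+2y\le s-1$, not $x+2y\ge s$. Taken literally, your criterion says every outcome-(b) graph \emph{has} two disjoint independent $p$-sets. The deferred ``case analysis'' is never supplied, and its ``iff $x+y\ge p$'' is only vacuously true. The missing one-liner is as follows. With $k=n-3x-4y\ge 0$ isolated vertices and $x+2y=s$,
\[
k+2(x+y)=n-(x+2y)=2p-1 .
\]
Hence $x+y\le p-1$ automatically; the side condition in $\KK_{3,4}(n)$ is free when $p=q$. So every outcome-(b) graph lies in $\KK_{3,4}(n)$. Moreover, two disjoint independent $p$-sets would need at least $2p-2(x+y)=k+1$ isolated vertices, which is impossible. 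Equivalently, $\alpha(G)=2p-1-(x+y)\ge p$ because $G$ contains $\ol K_p$.

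\emph{Further slips.} (i) The displayed formula $\cex(n,2K_p)=\min\{3s,\ol t_{n-1,p-1}\}$ is false for $n\ge 4p-2$. There $3s<\ol t_{n-1,p-1}$, yet no graph of outcome (b) or (c) exists: outcome (b) needs $3x+4y=2s+x\le n$ with $x\equiv s\pmod 2$, and outcome (c) needs $n\le 4p-4$. What you actually proved is $3s\le\cex(n,2K_p)\le\ol t_{n-1,p-1}$. Step 2 never needs more, so drop the formula.

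(ii) For $K^*_{3,4}(n)$, two disjoint independent sets can take four vertices of the $K_{1,3}$ (the center and the three leaves). Your total count therefore gives only $2(p-2)+4=2p$ and excludes nothing. The correct argument: each $p$-set meets the $p-2$ cliques in at most $p-2$ vertices, so it needs two star vertices, necessarily leaves, and two disjoint such pairs would need four leaves. Lemma~\ref{lemma8.1} already covers this.

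(iii) Step 4 has no delicate boundaries. With $x=s-2\lfloor s/2\rfloor\in\{0,1\}$, one has $3x+4y=2s+x\le n$ iff $n\le 4p-2-x$, which holds for $n\le 4p-4$. Also $x+y=\lceil s/2\rceil\le p-1$ since $s\le 2p-3$.
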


\begin{proof} Let $J_{n}\in\CEX(n, 2K_p)$. Since $e(J_{n})\le\ol{t}_{n-1,p-1}$, by Theorem~\ref{Turan}, $J_{n}$ contains $\ol{K}_p$. By Claim~\ref{claim9.2}(i),
if  $n\ge 4p-3$, then $\CEX(n, 2K_p)=\{\ol{K}_1\cup\overline{T}_{n-1,p-1}\}$. 

Then we consider $2p\le n\le 4p-4$. By Theorem~\ref{PQ}, $J_{n}\in\KK_{3,4}(n)$ or $J_{n}=x'K_3\cup y'K_4\cup K_{1,3}$. If the latter holds, then by $3x'+6y'+3=3(n-2p+1)$ and $3x'+4y'+4=n$, we have $x'=4p-4-n$ and $y'=n-3p+2$, i.e., $J_{n}=K_{3,4}^*(n)$. On the other hand, by Lemma~\ref{lemma8.1}, $\KK_{3,4}(n)\subseteq\CEX(n, 2K_p)$ and $K_{3,4}^*(n)\in\CEX(n, 2K_p)$. 
\end{proof}

\begin{Claim}\label{claim9.4}
    $\CEX(n, 3K_p)=\left\{\begin{array}{lll} \KK_{5,6}(n), & \mbox{if $3p+1\le n\le 5p-5$,}\\
    \KK_{5,6}(n)\cup\{K_{5,6}^*(n)\}, & \mbox{if $5p-4\le n\le 6p-6$,}\\ 
    \{\ol{K}_2\cup\overline{T}_{n-2,p-1}\},  & \mbox{$n\ge 6p-5$.}\end{array}\right.$
\end{Claim}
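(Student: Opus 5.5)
We follow the model of the proof of Claim~\ref{claim9.3}, now with $p=q=r$, so $c=2$ and $s=n-3p+1$. Let $G_n\in\CEX(n,3K_p)$. Since $e(G_n)\le\ol{t}_{n-2,p-1}$, Claim~\ref{claim9.1}(i) and (ii) rule out the cases where $G_n$ has no $\ol{K}_p$, or has $\ol{K}_p$ but no two disjoint induced $\ol{K}_p$. Hence $G_n$ contains disjoint induced $\ol{K}_p$, $\ol{K}_p$. For $n\ge 6p-5=6(p-1)+c-1$, Claim~\ref{claim9.2}(ii) gives $\CEX(n,3K_p)=\{\ol{K}_2\cup\ol{T}_{n-2,p-1}\}$ directly. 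So it remains to treat $3p+1\le n\le 6p-6$.

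\textbf{Value of $\cex(n,3K_p)$ in this range.} We first show $\cex(n,3K_p)=5s$. If $n\le 5(p-1)+2$, this follows from Theorem~\ref{pqr2}(1),(2) together with Lemma~\ref{5s}. If $5p-3\le n\le 6p-6$, we use Theorem~\ref{pqr2}(3) and check that $\ol{t}_{n-2,p-1}=5s$ throughout $[5p-3,6p-4]$. Indeed, $\ol{t}_{n-2,p-1}=5(n-2)-15(p-1)=5(n-3p+1)$, the same computation as in Lemma~\ref{5s}. Consequently, by Theorem~\ref{PQR}, $G_n$ satisfies (b) or (c) there.

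In case (b), $G_n=xK_5\cup yK_6\cup\ol{K}_{n-5x-6y}$ with $2x+3y=s$ and $5x+6y\le n$. We still need $x+y\le p-1$ to land in $\KK_{5,6}(n)=\KK_{5,6}(n,p-1)$. If $x+y\ge p$, there are at least $p$ cliques of order at least $5$. Picking one vertex from each clique gives three disjoint independent sets of size $p$, which contradicts the choice of $G_n$. So $G_n\in\KK_{5,6}(n)$.

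In case (c), $G_n=x'K_5\cup y'K_6\cup K_{1,5}$. Solving $10x'+15y'+5=5s$ and $5x'+6y'+6=n$ gives $y'=n-5p+4$ and $x'=6p-6-n$, so $G_n=K_{5,6}^*(n)$. This requires $y'\ge 0$, i.e.\ $n\ge 5p-4$, matching the definition's range $\frac{5(3p)}{3}-4\le n\le 6p-6$. Hence $K_{5,6}^*(n)$ occurs exactly when $5p-4\le n\le 6p-6$.

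\textbf{Converse inclusions.} By Lemma~\ref{lemma8.2}, every member of $\KK_{5,6}(n)$ and $K_{5,6}^*(n)$ avoids three disjoint induced $\ol{K}_p$. Each has exactly $5s$ edges, which equals $\cex$, so all of them are extremal. We must also check that $\KK_{5,6}(n)$ is nonempty, or at least consistent with the statement. Lemma~\ref{5s} supplies a member when $5s\le\ol{t}_{n-2,p-1}$, and that inequality holds on the whole range by the computation above.

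\textbf{Main obstacle.} The delicate part is the bookkeeping at the boundary $n\in[5p-3,6p-6]$. There we must ensure no other graph attains $\ol{t}_{n-2,p-1}=5s$, for instance $\ol{K}_2\cup\ol{T}_{n-2,p-1}$ itself. That graph is $2K_1\cup$ a union of $K_5$'s and $K_6$'s with $p-1$ cliques, so it already lies in $\KK_{5,6}(n)$. We would verify this identification explicitly. Combining the three ranges then yields the claimed description.
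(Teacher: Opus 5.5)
Your proposal is correct and follows the paper's proof: you force two disjoint induced $\ol{K}_p$ (Claim~\ref{claim9.1}), use Claim~\ref{claim9.2}(ii) for $n\ge 6p-5$, apply Theorem~\ref{PQR} and solve for $x',y'$ in the star case, and use Lemma~\ref{lemma8.2} for the reverse inclusion. You also make explicit several points the paper leaves implicit: that $\cex(n,3K_p)=5s$ on $[3p+1,6p-6]$, the range of $K_{5,6}^*(n)$, and that $x+y\le p-1$, which in fact follows automatically from $5x+6y\le n=2x+3y+3p-1$.

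One small citation fix: Lemma~\ref{5s} does not give $5s\le\ol{t}_{n-2,p-1}$ for $n<5p-3$. That inequality holds because $\ol{t}_{n-2,p-1}-5s$ is non-increasing in $n$ up to $n=5p-3$, where it equals $0$. Alternatively, it follows from Theorem~\ref{PQR} applied to $\ol{K}_2\cup\ol{T}_{n-2,p-1}$.
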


\begin{proof} Let $G_{n}\in\CEX(n, 3K_p)$ and $e\in E(G_{n})$. If $G_n$ contains no two disjoint induced $\ol{K}_p$, then $G_n-e$ does not have three disjoint induced $\ol{K}_p$, a contradiction. Thus $G_n$ contains two disjoint induced $\ol{K}_p$. By Claim~\ref{claim9.2}(ii), if $n\ge 6p-5$, then $\CEX(n, 3K_p)=\{\ol{K}_2\cup\overline{T}_{n-2,p-1}\}$. 

Then we consider $3p+1\le n\le 6p-6$. By Theorem~\ref{PQR}, $G_{n}\in\KK_{5,6}(n)$ or $G_{n}=x'K_5\cup y'K_6\cup K_{1,5}$. If the latter holds, then since $10x'+15y'+5=5(n-3p+1)$ and $5x'+6y'+6=n$, we have $x'=6p-6-n$ and $y'=n-5p+4$, i.e., $G_{n}=K_{5,6}^*(n)$. By Lemma~\ref{lemma8.2}, $\KK_{5,6}(n)\subseteq\CEX(n, 3K_p)$ and $K_{5,6}^*(n)\in\CEX(n, 3K_p)$. \end{proof}

\medskip

\begin{Claim}\label{claim9.5}
    (i) If $c=0$, then $\mathcal{H}_1(n)=\{G\in\KK_{3,4}(n,q-1): \cex(n,K_p\cup K_q\cup K_r)=3(s+r)\};$\\
    (ii) If $c\le 1$, then $\mathcal{H}_2(n)=\{G\in\KK_{5,6}(n,r-1): \cex(n,K_p\cup K_q\cup K_r)=5s\}.$
\end{Claim}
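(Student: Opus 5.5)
Both parts follow the same template: first determine the possible structure using Theorem~\ref{PQ} (resp. Theorem~\ref{PQR}) together with Theorem~\ref{pqr2}, then check which of the candidate graphs really avoid disjoint induced $\ol{K}_p,\ol{K}_q,\ol{K}_r$.

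\emph{Part (i), $c=0$.} Let $G\in\mathcal{H}_1(n)$. Then $G$ contains $\ol{K}_p$ but no disjoint induced $\ol{K}_p,\ol{K}_q$. Deleting any edge of $G$ creates at most one new independent set structure, so $G$ is also extremal for $K_p\cup K_q$. This gives $e(G)=\cex(n,K_p\cup K_q)$.

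Theorem~\ref{PQ} then gives $e(G)\ge 3(s+r)$. Hence $\cex(n,K_p\cup K_q\cup K_r)=3(s+r)=e(G)$, and $G$ is of type (b) or (c) of Theorem~\ref{PQ}. Type (c) requires $p=q$, which is excluded since $c=0$. So $G=K_{3,4}(n,x,y)$ with $x+2y=n-p-q+1$ and $3x+4y\le n$.

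It remains to show $x+y\le q-1$, and conversely that every such graph works.
\begin{itemize}
\item The components of $G$ that are cliques number $x+y$. An independent set of size $r$ disjoint from $P\cup Q$ is automatic once $n$ is large in the isolated part, so the real constraint is that $G$ has no disjoint $\ol{K}_p,\ol{K}_q$. This holds by the counting in Lemma~\ref{lemma8.1} whenever $x+y\le p-1$.
\item If $x+y\ge q$, choose $Q$ with one vertex from each of $q$ clique components. Choose $P$ from the remaining clique vertices plus isolated vertices; the counting identity $(p-x-y)+(q-x-y)-(n-3x-4y)=1$ shifts in our favour and produces disjoint $\ol{K}_p,\ol{K}_q$. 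This would contradict the definition of $\mathcal{H}_1(n)$.
\end{itemize}
So we must verify precisely: with $x+y$ cliques, disjoint $\ol{K}_p,\ol{K}_q$ exist if and only if $x+y\ge q$. Since $p>q$, each clique contributes at most one vertex to each of $P$ and $Q$. The condition reduces to $\max(0,p-k)+\max(0,q-k)\le n-3x-4y$, where $k=x+y$. Using $x+2y=n-p-q+1$, for $k\le q-1$ this gives the contradiction $1\le 0$, and for $k\ge q$ the inequality holds. For the converse, graphs in $\KK_{3,4}(n,q-1)$ are $K_p\cup K_q$-free by Lemma~\ref{lemma8.1}, hence $K_p\cup K_q\cup K_r$-free, and they have the extremal number of edges.

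\emph{Part (ii), $c\le 1$.} Let $G\in\mathcal{H}_2(n)$. By Theorem~\ref{PQR}, $e(G)\ge 5s$. Hence the extremal value is $5s$, and $G$ is of type (b) or (c) of Theorem~\ref{PQR}. Type (c) needs $p=q=r$, i.e. $c=2$, which is excluded. So $G=K_{5,6}(n,x,y)$.

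Let $k=x+y$. Each independent set takes at most one vertex per clique, and the sizes $p,q,r$ are filled greedily. Disjoint $\ol{K}_p,\ol{K}_q,\ol{K}_r$ exist if and only if
\[
\sum_{t\in\{p,q,r\}}\max(0,t-k)\le n-5x-6y .
\]
When $k\le r-1$, Lemma~\ref{lemma8.2} makes the left side exceed the right by exactly $1$, so these graphs are valid. When $k\ge r$, the term for $r$ drops out and the inequality holds, so $G$ is not valid.

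The converse inclusion again uses Lemma~\ref{lemma8.2} together with $e=5s$ being extremal.

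\emph{Expected obstacle.} The main care point is the case $k\ge r$ (resp. $k\ge q$). I expect it to be routine, but one has to recheck that the greedy choice is valid when $k$ exceeds the smaller sizes, including the feasibility constraint $5x+6y\le n$.
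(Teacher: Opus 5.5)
Your criterion for disjoint induced $\ol{K}_p,\ol{K}_q$ in $K_{3,4}(n,x,y)$ is false, and so is the role you give to the condition $x+y\le q-1$. With $k=x+y$ and $x+2y=n-p-q+1$ one has $n-3x-4y=p+q-1-2k$. So for $k\ge q$ your inequality reads $p-k\le p+q-1-2k$, i.e.\ $k\le q-1$, which fails. In fact no such $K_{3,4}(n,x,y)$ ever contains disjoint $\ol{K}_p,\ol{K}_q$; for instance $p=5$, $q=3$, $r=2$, $n=11$, $G=2K_3\cup K_4\cup K_1$ has $\alpha(G)=4$. The contradiction you derive for $k\ge q$ therefore does not exist.

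The correct argument goes the other way. For $k\ge q$,
\[
\alpha(G)=k+(n-3x-4y)=n-(x+2y)-k\le p-1,
\]
so $G$ contains no $\ol{K}_p$ at all and cannot lie in $\mathcal{H}_1(n)$. The same mistake occurs in (ii). For $k\ge r$ your inequality again fails. The real point is that two disjoint independent sets in $K_{5,6}(n,x,y)$ have total size at most $2k+(n-5x-6y)=n-(2x+3y)-k\le p+q-1$. So $G$ lacks disjoint $\ol{K}_p,\ol{K}_q$ and is not in $\mathcal{H}_2(n)$; it does not contain the forbidden triple.

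For the same reason your converse inclusions are incomplete. Lemma~\ref{lemma8.1} (resp.\ Lemma~\ref{lemma8.2}) and the edge count only show that a graph of $\KK_{3,4}(n,q-1)$ (resp.\ $\KK_{5,6}(n,r-1)$) lies in $\CEX(n,K_p\cup K_q\cup K_r)$. To place it in $\mathcal{H}_1(n)$ rather than $\mathcal{H}_0(n)$, you must use $k\le q-1$ to get $\alpha\ge p$. To place it in $\mathcal{H}_2(n)$ rather than $\mathcal{H}_0(n)\cup\mathcal{H}_1(n)$, you must use $k\le r-1$ to fit disjoint $\ol{K}_p,\ol{K}_q$, taking one vertex of each clique into each set and filling with isolated vertices. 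This is exactly where the bound on $k$ is needed.

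Second, the step ``$e(G)\ge 3(s+r)$, hence $\cex(n,K_p\cup K_q\cup K_r)=3(s+r)$'' (and its analogue with $5s$) needs the reverse inequality. Theorems~\ref{PQ} and~\ref{PQR} give only lower bounds, and the reverse inequality is false for large $n$, where the extremal value is $\ol{t}_{n-c,p-1}$. So you must first bound $n$.

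In (i), $G\in\CEX(n,K_p\cup K_q)$ contains $\ol{K}_p$, so Claim~\ref{claim9.2}(i) forces $n\le 4(p-1)-1$. Then Theorem~\ref{pqr2}(1) gives $e(G)\le 3(s+r)$.

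In (ii), $\ol{K}_c\cup\ol{T}_{n-c,p-1}$ has no disjoint $\ol{K}_p,\ol{K}_q$ when $c\le 1$, so Claim~\ref{claim9.2}(ii) forces $n\le 6(p-1)+c-2$. Then Theorem~\ref{pqr2} together with Lemma~\ref{5s} gives $e(G)\le 5s$.

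Finally, your justification that $G$ is extremal for $K_p\cup K_q$ is garbled. The clean reason is $e(G)\ge\cex(n,K_p\cup K_q)\ge\cex(n,K_p\cup K_q\cup K_r)=e(G)$.
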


\begin{proof} To prove (i), suppose that  $c=0$. First, we show that
\begin{equation}\label{10.1}
    \mathcal{H}_1(n)\subseteq\{G\in\KK_{3,4}(n,q-1): \cex(n,K_p\cup K_q\cup K_r)=3(s+r)\}.
\end{equation}
{ If $\mathcal{H}_1(n)=\emptyset$, then~\eqref{10.1} holds.} If $\mathcal{H}_1(n)\not=\emptyset$, let $G_1(n)\in\HH_1(n)$. Since $G_1(n)\in\CEX(n,K_p\cup K_q)$, by Claim~\ref{claim9.2}(i), $n\le 4(p-1)-1$. By Theorem~\ref{pqr2}, $e(G_1(n))=\min\{5s,3(s+r),\ol{t}_{n,p-1}\}$ and by Theorem~\ref{PQ},  $e(G_1(n))\ge 3(s+r)$. Thus $e(G_1(n))=3(s+r)$. By Theorem~\ref{PQ}, $G_1(n)=xK_3\cup yK_4\cup\ol{K}_{n-3x-4y}$. If $x+y\ge q$, then 
$$\alpha(G_1(n))=(x+y)+(n-3x-4y)=n-(x+2y)-(x+y)\le n-(n-p-q+1)-q=p-1,$$ contradicting the fact that $G_1(n)$ contains an induced $\ol{K}_p$.  Thus $x+y\le q-1$. This proves~\eqref{10.1}.

Next, we prove that
\begin{equation}\label{10.2}
    \{G\in\KK_{3,4}(n,q-1): \cex(n,K_p\cup K_q\cup K_r)=3(s+r)\}\subseteq\mathcal{H}_1(n).
\end{equation}
Suppose now that $\cex(n,K_p\cup K_q\cup K_r)=3(s+r)$. If $\KK_{3,4}(n,q-1)=\emptyset$, then~\eqref{10.2} holds. If $\KK_{3,4}(n,q-1)\not=\emptyset$, let $G'=xK_3\cup yK_4\cup\ol{K}_{n-3x-4y}\in\KK_{3,4}(n,q-1)$. Since $x+2y=n-p-q+1$ and $x+y\le q-1$, we have $$\alpha(G')=(x+y)+(n-3x-4y)=n-(x+2y)-(x+y)\ge n-(n-p-q+1)-(q-1)=p.$$ Thus $G'$ contains an induced $\ol{K}_p$. By Lemma~\ref{lemma8.1}, $G'$ has no disjoint induced $\ol{K}_p$ and $\ol{K}_q$. Since $e(G')=3(s+r)$, $G'\in\CEX(n,K_p\cup K_q\cup K_r)$. Thus $G'\in\mathcal{H}_1(n)$. This proves~\eqref{10.2}.

To prove (ii) suppose that $c\le 1$. First, we prove that
\begin{equation}\label{10.3}
    \mathcal{H}_2(n)\subseteq\{G\in\KK_{5,6}(n,r-1): \cex(n,K_p\cup K_q\cup K_r)=5s\}.
\end{equation}
If $\mathcal{H}_2(n)=\emptyset$, then~\eqref{10.3} holds. If $\mathcal{H}_2(n)\not=\emptyset$, let $G_2(n)\in\HH_2(n)$. Since $G_2(n)\in\CEX(n,K_p\cup K_q\cup K_r)$, by Claim~\ref{claim9.2}(ii), $n\le 6(p-1)+c-2$. By Theorem~\ref{PQR}, $e(G_2(n))\ge 5s$. Theorem~\ref{pqr2} and Lemma~\ref{5s} yield $e(G_2(n))\leq 5s$. 
Thus, $e(G_2(n))= 5s$. Then Theorem~\ref{PQR} implies that $G_2(n)=xK_5\cup yK_6\cup\ol{K}_{n-5x-6y}$. If $x+y\ge r$, then the maximum total size of two disjoint independent sets in $G_2(n)$ is 
$$2(x+y)+(n-5x-6y)=n-(2x+3y)-(x+y)\le n-(n-p-q-r+1)-r=p+q-1,$$ contradicting the fact that $G_2(n)$ contains disjoint induced $\ol{K}_p$ and $\ol{K}_q$. Thus $x+y\le r-1$. This proves~\eqref{10.3}.

Next, we prove that
\begin{equation}\label{10.4}
    \{G\in\KK_{5,6}(n,r-1): \cex(n,K_p\cup K_q\cup K_r)=5s\}\subseteq\mathcal{H}_2(n).
\end{equation}
Suppose now that $\cex(n,K_p\cup K_q\cup K_r)=5s$. If $\KK_{5,6}(n,r-1)=\emptyset$, then~\eqref{10.4} holds. If $\KK_{5,6}(n,r-1)\not=\emptyset$, let $G'=xK_5\cup yK_6\cup\ol{K}_{n-5x-6y}\in\KK_{5,6}(n,r-1)$. Since $2x+3y=n-p-q-r+1$ and $x+y\le r-1$, the maximum total size of two disjoint independent sets in $G'$ is 
$$2(x+y)+(n-5x-6y)=n-(2x+3y)-(x+y)\ge n-(n-p-q-r+1)-(r-1)=p+q.$$ Thus $G'$ contains disjoint induced $\ol{K}_p$ and $\ol{K}_q$. By Lemma~\ref{lemma8.2}, $G'$ has no disjoint induced $\ol{K}_p$, $\ol{K}_q$ and $\ol{K}_r$. Since $e(G')=5s$, $G'\in\CEX(n,K_p\cup K_q\cup K_r)$. Thus $G'\in\mathcal{H}_2(n)$. This proves~\eqref{10.4}.
\end{proof}

\medskip
{ Now we are ready to finish the proof of the theorem.}

\medskip
{\bf Case 1:} $c=0$. Since $\HH_0(n)\subseteq\CEX(n, K_p)$, by Theorem~\ref{Turan}, $\mathcal{H}_0(n)=\{\overline{T}_{n,p-1}: \cex(n,K_p\cup K_q\cup K_r)=\ol{t}_{n,p-1}\}$. Together with Claim~\ref{claim9.5}, we are done.

\medskip
{\bf Case 2:} $c=1$. By Claim~\ref{claim9.1}(i), $\mathcal{H}_0(n)=\emptyset$. Note that $\HH_1(n)\subseteq\CEX(n, 2K_p)$. By Claim~\ref{claim9.3}, $$\begin{aligned}
\mathcal{H}_1(n)=&\{G\in\KK_{3,4}(n): \cex(n,K_p\cup K_q\cup K_r)=3(s+r)\}\\
&\cup\{K_{3,4}^*(n): \cex(n,K_p\cup K_q\cup K_r)=3(s+r) \mbox{ and }\tfrac{3(p+q)}{2}-2\le n\le 2p+2q-4\}\\
&\cup\{\ol{K}_1\cup\overline{T}_{n-1,p-1}: \cex(n,K_p\cup K_q\cup K_r)=\ol{t}_{n-1,p-1}\}.
\end{aligned}$$By Claim~\ref{claim9.5}(ii), we are done.

\medskip
{\bf Case 3:} $c=2$. In this case, $r=q=p$. 
By Claim~\ref{claim9.1}, $\mathcal{H}_0(n)=\emptyset$ and $\mathcal{H}_1(n)=\emptyset$. Since $\HH_2(n)=\CEX(n, 3K_p)$, by Claim~\ref{claim9.4}, $$\mathcal{H}_2(n)=\left\{\begin{array}{lll} \KK_{5,6}(n), & \mbox{if $n\le 5p-5$,}\\ \KK_{5,6}(n)\cup\{K_{5,6}^*(n)\}, & \mbox{if $5p-4\le n\le 6p-6$,}\\ \{\ol{K}_2\cup\overline{T}_{n-2,p-1}\},  & \mbox{$n\ge 6p-5$.}\end{array}\right.$$\qed

\section{Concluding remarks}\label{remarks}

We think that the following claims are likely to be true.

\begin{Conjecture} \label{conj1}
\label{PQRC} Let $k_1,k_2,\ldots,k_{\ell-1}\geq  k_\ell\geq 2$, $n\geq k_1+k_2+\cdots+k_\ell$ and $s=n-k_1-k_2-\cdots-k_\ell+1$. If an $n$-vertex graph $G$ contains  vertex disjoint induced $\ol{K}_{k_1},\ldots,\ol{K}_{k_{\ell-1}}$ and 
$e(G)<(2\ell-1)s$, then $G$ contains  vertex disjoint  induced $\ol{K}_{k_1},\ldots,\ol{K}_{k_\ell}$.
\end{Conjecture}

{ \begin{Conjecture} 
\label{PQRC2} Let $\ell\ge 4$ and $k_1,k_2,\ldots, k_\ell$ be positive integers with $k_1\ge k_2\ge \ldots\ge k_{\ell-1}\geq  k_\ell\ge 2$. Define $t=\max\{ i |\ k_i=k_1, 1\le i\le \ell\}$. 

(1) If there is an integer $n_3\ge k_1+k_2+\cdots+k_\ell$ such that $K_{t-1}\vee T_{n_3-t+1,k_1-1}\in \EX(n_3, K_{k_1}\cup\cdots\cup K_{k_\ell})$, then $K_{t-1}\vee T_{n-t+1,k_1-1}\in \EX(n, K_{k_1}\cup\cdots\cup K_{k_\ell})$ for all $n\ge n_3$;

(2) If there is an integer $n_4\ge k_1+k_2+\cdots+k_\ell$ such that $\EX(n_4, K_{k_1}\cup\cdots\cup K_{k_\ell})=\{K_{t-1}\vee T_{n_4-t+1,k_1-1}\}$, then $\EX(n, K_{k_1}\cup\cdots\cup K_{k_\ell})=\{K_{t-1}\vee T_{n-t+1,k_1-1}\}$ for all $n\ge n_4$.
\end{Conjecture}}

\section{Acknowledgment}
Part of the work was carried out while the third author was a PhD student at the University of Illinois Urbana-Champaign and was supported in part by NSF RTG Grant DMS-1937241.

\makeatletter
\renewcommand{\@biblabel}[1]{#1.}
\makeatother
\providecommand{\bysame}{\leavevmode\hbox to3em{\hrulefill}\thinspace}

\end{document}